\newcommand {\be}[1]{\begin{equation}\label{#1}}
\newcommand {\ee}{\end{equation}}
\newcommand {\bea}{\begin{eqnarray}}
\newcommand {\eea}{\end{eqnarray}}
\newcommand {\bE}{{\mathbb E}}
\newcommand{\oFs}{\overline{F}^{*}}
\def\texitem#1{\par\smallskip\noindent\hangindent 25pt
               \hbox to 25pt {\hss #1 ~}\ignorespaces}
\newtheorem{theorem}{Theorem}
\newtheorem{prop}{Proposition}
\newcommand{\appsection}[1]{\let\oldthesection\thesection
  \renewcommand{\thesection}{Electronic Companion \oldthesection}
  \section{#1}\let\thesection\oldthesection}
\theoremstyle{remark}
\newcommand\RV{\mathcal{RV}}
\newcommand\prob{\mathbb{P}}
\numberwithin{equation}{section}
\numberwithin{theorem}{section}
\numberwithin{corollary}{section}
\numberwithin{prop}{section}
\definecolor{darkred}{RGB}{139,0,0}
\definecolor{darkgreen}{RGB}{0,139,0}
\newcommand{\DAS}[1]{{\color{black} #1}}
\begin{document}


\title{On the heavy-tail behavior of the distributionally robust newsvendor}
\author{Bikramjit Das\thanks{Engineering Systems and Design, Singapore University of Technology and Design, 8 Somapah Road, Singapore 487372. Email: bikram@sutd.edu.sg} \and Anulekha Dhara\thanks{Industrial and Operations Engineering, University of Michigan, Ann Arbor, Michigan 48109. Email: adhara@umich.edu} \and Karthik Natarajan\thanks{Engineering Systems and Design, Singapore University of Technology and Design, 8 Somapah Road, Singapore 487372. Email: karthik\_natarajan@sutd.edu.sg}
\footnote{The research of the first author was partly supported by the MOE Academic Research Fund Tier 2 grant MOE2017-T2-2-161, ``Learning from common connections in social networks" second and third authors was partly supported by MOE Academic Research Fund Tier 2 grant T2MOE1706, ``On the Interplay of Choice, Robustness and Optimization in Transportation" and the SUTD-MIT International Design Center grant IDG21700101 on ``Design of the
Last Mile Transportation System: What Does the Customer Really Want?".}
}
\date{First submitted: June 2018, Revised: August 2019}
\maketitle


\begin{abstract}
Since the seminal work of Scarf (1958) [A min-max solution of an inventory problem, Studies in the Mathematical Theory of Inventory and Production, pages 201-209] on the newsvendor problem with ambiguity in the demand distribution, there has been a growing interest in the study of the distributionally robust newsvendor problem. The model is criticized at times for being overly conservative since the worst-case distribution is discrete with a few support points. However, it is the order quantity prescribed from the model that is of practical relevance. A simple calculation shows that the optimal order quantity in Scarf's model with known first and second moment is also optimal for a censored student-t distribution with parameter 2. In this paper, we generalize this ``heavy-tail optimality" property of the distributionally robust newsvendor to an ambiguity set where information on the first and the $\alpha$th moment is known, \textcolor{black}{for any real number $\alpha > 1$}. \textcolor{black}{We show that the optimal order quantity for the distributionally robust newsvendor problem is also optimal for a regularly varying distribution with roughly a power law tail with tail index $\alpha$. We illustrate the usefulness of the model in the high service level regime with numerical experiments, by showing that when a standard distribution such as the exponential or lognormal distribution is contaminated with a heavy-tailed (regularly varying) distribution, the distributionally robust optimal order quantity outperforms the optimal order quantity of the original distribution, even with a small amount of contamination.}
\end{abstract}

\maketitle

\section{Introduction}  \label{sec1}
Since the pioneering work of Scarf \cite{scarf58}, there has been a growing interest in the study of the distributionally robust newsvendor problem where the probability distribution of the demand is ambiguous. Formally, the problem is stated as follows. A newsvendor needs to decide on the number of units of an item to order before the actual demand is observed. The unit purchase cost is $c$ where $c > 0$ and the unit revenue is $p$ where $p > c$. Any unsold item at the end of the selling period has zero salvage value. The demand $\tilde{d}$ for the item is random and unknown before the order is placed. Furthermore, the probability distribution of the demand, denoted by $F(w):=\prob(\tilde{d}\le w)$ is ambiguous and only assumed to lie in a set of possible distributions $\mathcal{F}$. The ambiguity in the demand distribution might arise due to one of several reasons. It might arise as a subjective input when a new product is introduced into the market for which past demand data is unavailable and one is unsure about an exact distribution or it might arise when a set of plausible demand distributions is constructed from past data using moments, structural information or probability distance metrics. All relevant information on the demand distribution that the newsvendor possesses is assumed to be captured in the set $\mathcal{F}$. The distributionally robust newsvendor then orders the quantity that maximizes the minimum (worst-case) expected profit. Mathematically, this problem is formulated as choosing an order quantity $q$ that solves:
\begin{equation}
\begin{array}{lll} \label{a}
\displaystyle \max_{q \in \Re_{+}} & \displaystyle \inf_{F \in \mathcal{F}} & \displaystyle \left( p\mathbb{E}_{F}[\min(q,\tilde{d})] - cq\right).
\end{array}
\end{equation}
Using the relation $\min(d,q) = d - [d-q]_+$, where $[d-q]_+ = \max(0,d-q)$, the optimal order quantity in (\ref{a}) is equivalently obtained by solving the problem:
\begin{equation}
\begin{array}{lll} \label{c}
\displaystyle \min_{q \in \Re_{+}} & \displaystyle  \left((1-\eta)q + \sup_{F \in \mathcal{F}}\mathbb{E}_{F}[\tilde{d}-q]_+\right),
\end{array}
\end{equation}
under the assumption that the mean value of demand is specified in the set $\mathcal{F}$, where $\eta = 1-{c}/{p} \in [0,1)$ denotes the critical ratio. The formulation in \eqref{c} can be interpreted as a worst-case expected cost minimization version of the newsvendor problem.

\subsection{Scarf's Model} \label{subsec:scarf}
The earliest version of the model is attributed to Scarf \cite{scarf58} who assumed that the mean and the variance of the demand are specified in the set $\mathcal{F}$, but the exact form of the distribution is unknown. The set of demand distributions is defined as:
\begin{equation*} \label{b}
\mathcal{F}_{1,2} = \left\{F \in {\mathbb M}(\Re_+): \displaystyle\int_{0}^\infty \mathrm d F(w) = 1,~ \displaystyle\int_{0}^\infty w\:\mathrm dF(w) = m_1,~ \displaystyle\int_{0}^\infty w^2\: \mathrm d F(w) = m_2\right\},
\end{equation*}
where $\mathbb{M}(\Re_+)$ is the set of finite positive Borel measures supported on the non-negative real line and $m_1$ and $m_2$ are the first and second moment which are assumed to satisfy $m_2 > m_1^2 > 0$. Note that when $m_2 = m_1^2$, the demand is deterministic with support at $m_1$ and the optimal order quantity is trivially $m_1$. In the standard newsvendor model, when the set of distributions is a singleton, the optimal order quantity reduces to the well-known critical fractile formula $q^* = F^{-1}(\eta)$, where $F^{-1}(\cdot)$ is the generalized inverse of the cumulative distribution function. However, in the  robust model, the worst-case demand distribution might change with the order quantity. Scarf \cite{scarf58} explicitly characterized the unique two point distribution that attains the worst-case in (\ref{c}) for the set $\mathcal{F}_{1,2}$. Given an order quantity $q \geq m_2/2m_1$, the worst-case demand distribution for $\sup_{F \in \mathcal{F}_{1,2}} \:\mathbb{E}_F[\tilde{d} - q]_+$  is given by the distribution with two support points:
\begin{equation*} \label{d0}
\tilde{d}^*_q = \left\{\begin{array}{llr}
\displaystyle q - \sqrt{q^2-2m_1q+m_2}, & \textrm{w.p. } \displaystyle\frac{1}{2}\left(1 + \frac{q-m_1}{\sqrt{q^2-2m_1q+m_2}}\right),\\
\displaystyle q + \sqrt{q^2-2m_1q+m_2}, & \textrm{w.p. } \displaystyle\frac{1}{2}\left(1 - \frac{q-m_1}{\sqrt{q^2-2m_1q+m_2}}\right),
\end{array}\right.
\end{equation*}
where the support points and the probabilities are dependent on $q$ (this can be viewed as the ``power" of the adversary). In the case, when the order quantity $q$ lies in the range $[0, m_2/2m_1]$, the worst-case demand distribution is two-point, but independent of $q$ and given by $\tilde{d}^*_{m_2/2m_1}$, where:
\begin{equation*} \label{d}
\displaystyle \tilde{d}^*_{m_2/2m_1} = \left\{\begin{array}{llr}
\displaystyle 0, & \textrm{w.p. } \displaystyle 1-\frac{m_1^2}{m_2},\\
\displaystyle \frac{m_2}{m_1}, & \textrm{w.p. } \displaystyle \frac{m_1^2}{m_2}.
\end{array}\right.
\end{equation*}
Combining these results, the worst-case bound is given as:
\begin{equation} \label{e2}
\displaystyle \sup_{F \in \mathcal{F}_{1,2}} \:\mathbb{E}_F[\tilde{d} - q]_+ = \left\{\begin{array}{ll}
\displaystyle \frac{1}{2}\left({\sqrt{q^2-2m_1q+m_2}}-(q-m_1)\right), & \textrm{if } \displaystyle q \geq \frac{m_2}{2m_1},\\
\displaystyle m_1 - \frac{qm_1^2}{m_2}, & \textrm{if } \displaystyle 0\leq q < \frac{m_2}{2m_1}.
\end{array}\right.
\end{equation}
Plugging in the expression (\ref{e2}) into (\ref{c}) and a direct application of calculus provides a closed form solution for the optimal order quantity as follows:
\begin{equation*} \label{e}
q^* = \left\{\begin{array}{ll}
\displaystyle m_1 +\frac{\sqrt{m_2-m_1^2}}{2}\frac{2\eta - 1}{\sqrt{\eta(1-\eta)}}, & \textrm{if } \displaystyle \frac{m_2-m_1^2}{m_2} < \eta < 1,\\
0,& \textrm{if } \displaystyle 0 \leq \eta < \frac{m_2-m_1^2}{m_2},
\end{array}\right.
\end{equation*}
where in the case that $\eta = {(m_2-m_1^2)}/{m_2}$, the optimal order quantity is the given by the set of all values in the interval $[0,{m_2}/{(2m_1)}]$.

While the optimal order quantity is a simple closed form expression, this model is criticized at times for being conservative\footnote{We cite from page 243 in Wang, Glynn and Ye \cite{zizhuo}: ``In the distributionally robust optimization approach, the worst-case distribution for a decision is often unrealistic. Scarf (1958) shows that the worst-case distribution in the newsvendor context is a two-point distribution. This raises the concern that the decision chosen by this approach is guarding under some overly conservative scenarios, while performing poorly in more likely scenarios. Unfortunately, these drawbacks seem to be inherent in the model choice and cannot be remedied easily."}. However it is also known that for certain distributions, the model provides a good approximation. Scarf \cite{scarf58} is his original treatise had observed that for a large range of critical ratios (specifically $\eta$ in the range $[0.05,0.95]$), the optimal order quantity for the two moment model is very close to optimal order quantity for a normal approximation of a Poisson distribution, while for higher critical ratios, the model prescribed higher order quantities. Gallego and Moon \cite{gallego93} in a follow-up set of experiments compared the order quantity from Scarf's model with the optimal order quantity for normally distributed demands and concluded that for a large range of critical ratios, the loss in profit is not significant. \textcolor{black}{On the other hand, Wang, Glynn and Ye \cite{zizhuo} found numerically that the difference in the order quantity from Scarf's model and the true optimal order quantity under the exponential distribution is more significant, for certain choices of the critical ratio (specifically they consider critical ratios around 0.5 in Figure \ref{plotresults1}(c)).}
In Figure \ref{plotresults1}, we provide a comparison of the optimal order quantities for  demand distributions (normal and exponential) and Scarf's model where only the first two moments are assumed to be known. While the figure suggests, the optimal order quantities from Scarf's model is comparatively close to the optimal order quantities for the normal and exponential distributions for moderate critical ratios, it prescribes substantially higher order quantities for high critical ratios. In this paper, we provide an analytical characterization of this numerical insight and generalize it to a larger class of models.
 \begin{figure}[htbp]
\centering
\includegraphics[width=18cm] {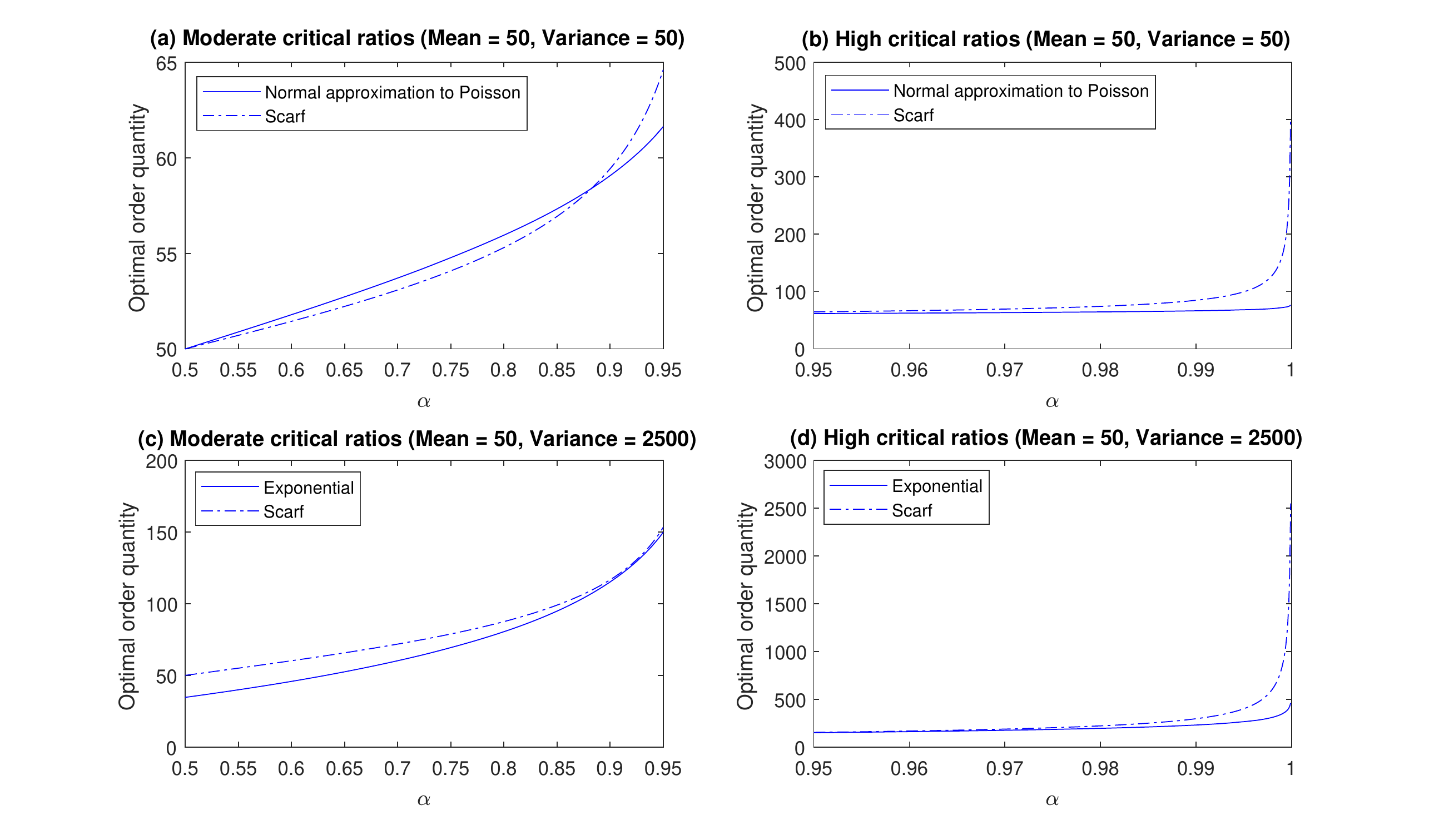}
\caption{The plots at the top compare the optimal order quantities for a normal approximation to a Poisson demand distribution and Scarf's model as in the original paper of Scarf \cite{scarf58} with mean demand $50$ and variance $50$. The plots at the bottom compare the optimal order quantities for an exponential demand distribution and Scarf's model with mean demand $50$ and variance $2500$.}
\label{plotresults1}
\end{figure}

The format of the paper and the main contributions are discussed next:
\texitem{(a)} In Section 2, we provide an overview of the distributionally robust newsvendor problem while analytically characterizing a heavy-tail optimality property that the Scarf's newsvendor model possesses. While this observation has been made in prior research (see M\"{u}ller and Stoyan \cite{muller} and Gallego \cite{gallego98}), the result has not been extended to generalizations of Scarf's model, to the best of our knowledge. We also discuss empirical research that suggests that heavy-tailed distributions is a possible occurrence in real demand datasets.
\texitem{(b)} \textcolor{black}{In Section 3, we propose a generalization of the ambiguity set from the first and the second moment to the first and the $\alpha$th moment for any real $\alpha > 1$. The ambiguity set is simple while providing flexibility in allowing for new sets of distributions to be considered. Specifically, for $1 < \alpha < 2$, the ambiguity set contains distributions that are more heavy-tailed in comparison to Scarf's model (second moments are not necessarily finite), while for $\alpha > 2$, it contains distributions that are more light-tailed than in Scarf's model. However, unlike Scarf's model, there is a technical challenge since the worst-case expected cost does not appear to have a closed form representation in general. In the special case, when the order quantities are below a certain value, we derive a tight closed form expression. Based on this, we show that for any $\alpha$, there exists a threshold for the critical ratio below which it is optimal to order nothing. On the other hand, in the case, when order quantities are above a certain value, we derive new upper and lower bounds for the worst-case expected cost by creating appropriate primal and dual feasible solutions to the moment problem. We show that these upper and lower bounds are asymptotically tight, namely the ratio of the bounds converge to 1 as the order quantity approaches infinity. These bounds help us characterize the tail behavior of the optimal order quantity.}
\texitem{(c)} In Section 4, we provide a characterization of the optimal order quantity in the high service level regime for the distributionally robust newsvendor model by showing that it is optimal for a regularly varying distribution with tail parameter $\alpha$, using techniques developed to model heavy-tailed distributions. This provides an explicit link between the solution of a robust optimization problem which accounts for worst-case behavior and heavy-tails which are used to model extreme events. Particularly, it shows that while the worst-case distribution in a distributionally robust newsvendor problem might be discrete with a few support points, the order quantities remain optimal for high critical ratios, for a regularly varying continuous distribution with an infinite $\alpha$th moment.
\texitem{(d)} \textcolor{black}{In Section 5, we provide numerical examples to illustrate the usefulness of the model for high critical ratios by showing: (1) the value of incorporating moment information beyond the variance in better approximating the optimal order quantities for specific distributions, (2) the difference in the behavior of the ratio of the optimal order quantity and the optimal cost in the robust model in comparison to distributions that are not in the regularly varying class, and, (3) the robustness of the optimal order quantity when a standard distribution is contaminated with a heavy-tailed (regularly varying) distribution, even with a small amount of contamination.  We finally conclude in Section 6 by identifying future research directions.}




\section{Literature Review} \label{two}
In this section, we review some of the key results for the distributionally robust newsvendor problem with a focus on ambiguity sets where demand might take any value in $[0,\infty)$. Our interest in such ambiguity sets stems from an attempt to provide a characterization of the tail behavior which is of particular interest in solving newsvendor problems with high service levels. There is growing evidence in the literature that a stockout for retailer has significant short-term and long-term effects that needs to be minimized (see Anderson, Fitzsimons and Simester \cite{and}). The high service level regime is the natural domain of interest in this case. We also review prior research that provides empirical evidence on heavy-tailed demand distributions.

\subsection{Distributionally Robust Newsvendor Models} \label{related}
Shapiro and Kleywegt \cite{shapiro02} and Shapiro and Ahmed \cite{shapiro} developed a reformulation of the distributionally robust newsvendor as a classical newsvendor problem through the construction of a new probability demand distribution. \textcolor{black}{The key insight to their reformulation is the observation (see Section 3.1 on page 532 in Shapiro and Kleywegt \cite{shapiro02}) that given a set $\mathcal{F}$ with a finite mean, there exists a non-negative random variable $\tilde{d}^*$ with probability distribution ${F}^*$ such that the following equality holds for all values of $q$:
\begin{equation}  \label{f}
\displaystyle \sup_{F \in \mathcal{F}} \:\mathbb{E}_{F}[\tilde{d} - q]_+ = \mathbb{E}_{{F}^*}[\tilde{d}^* - q]_+,~\forall q.
\end{equation}
This is seen by noting that the function $\Pi(q) := \sup_{F \in \mathcal{F}} \:\mathbb{E}_{F}[\tilde{d} - q]_+$ is a non-increasing, convex function that satisfies the following properties: (i) For all $q \leq 0$, $\Pi(q)+q = \sup_{F \in \mathcal{F}} \:\mathbb{E}_{F}[\tilde{d}]$ which is a constant under the assumption that the ambiguity set contains distributions with a finite mean (equal to $m_1$ when the mean is specified in the ambiguity set), and (ii) $\lim_{q \rightarrow \infty}\Pi(q) = 0 $. This implies that there exists a non-negative random variable $\tilde{d}^*$ with a distribution given by $F^*(q) = 1 + \Pi'_{+}(q)$ where $\Pi'_{+}(\cdot)$ is the right derivative of $\Pi(\cdot)$, such that condition (\ref{f}) is satisfied (see Theorem 1.5.10 in M\"{u}ller and Stoyan \cite{muller} for a related statement). It is easy to verify that when the ambiguity set $\mathcal{F}$ consists of random variables with a fixed mean $m_1$, the new random variable $\tilde{d}^*$ also has mean $m_1$, since $\Pi(0) = m_1$.} This corresponds to a random variable that dominates all the random variables $\tilde{d}$ in the set $\mathcal{F}$ in an increasing convex order sense (see M\"{u}ller and Stoyan \cite{muller}, Shaked and Shanthikumar \cite{shaked}). Unlike the extremal distribution on the left hand side of the equation (\ref{f}) which might vary with $q$, the random variable $\tilde{d}^*$ on the right hand side has a distribution $F^*$ which is independent of $q$. This equivalence helps convert the distributionally robust newsvendor problem to the classical newsvendor problem as follows:
\begin{equation}
\begin{array}{lll} \label{c1}
\displaystyle \min_{q \in \Re_{+}}  & \displaystyle  \left((1-\eta)q+\mathbb{E}_{F^*}[\tilde{d}^*-q]_{+} \right),
\end{array}
\end{equation}
where $F^*$ is independent of the critical ratio $\eta$. However, the challenge in applying this technique to solve the distributionally robust newsvendor problem is that $F^*$ in most cases does not have an explicit characterization in terms of the original set of distributions $\mathcal{F}$ and might not even lie in this set. However, the equivalence provides an important insight as it identifies a new distribution $F^*$ for which the optimal order quantity from the distributionally robust newsvendor model in (\ref{c}) is optimal, regardless of the parameters of the problem.

\textcolor{black}{In Scarf's model, it is straightforward to construct the distribution ${F}^*$ as the worst-case bound is known in closed form for each value of $q$. Computing the right-derivative of the worst-case bound in the two regions in (\ref{e2}) and using $F^*(q) = 1 + \Pi^{'}_{1,2+}(q)$, where $\Pi_{1,2}(q) = \sup_{F \in \mathcal{F}_{1,2}} \:\mathbb{E}_{F}[\tilde{d} - q]_+$,} we obtain the characterization of the distribution $F^*$ as follows:
\begin{equation} \label{f1}
\displaystyle F^{*}(w) = \mathbb{P}(\tilde{d}^* \leq w) = \left\{\begin{array}{ll}
\displaystyle  \displaystyle\frac{1}{2} \left(1 + \frac{w-m_1}{{\sqrt{w^2-2m_1w+m_2}}}\right), & \textrm{if } \displaystyle w \geq \frac{m_2}{2m_1},\\
\displaystyle  \displaystyle 1-\frac{m_1^2}{m_2}, & \textrm{if } \displaystyle 0\leq w < \frac{m_2}{2m_1}.
\end{array}\right.
\end{equation}
The distribution in (\ref{f1}) defines a censored student-t random variable with a mixture of discrete and continuous terms as follows:
\begin{equation} \label{g1}
\displaystyle  \tilde{d}^* = \left\{\begin{array}{ll}
\displaystyle \tilde{t}_{\footnotesize{2}}\left(\tiny{m_1,(m_2-m_1^2)/2}\right), & \textrm{if } \displaystyle \tilde{t}_{\footnotesize{2}}\left(\tiny{m_1,(m_2-m_1^2)/2}\right) \geq \frac{m_2}{2m_1},\\
\displaystyle 0, & \textrm{otherwise},
\end{array}\right.
\end{equation}
where $\tilde{t}_{\nu}(\mu,\sigma^2)$ is a three parameter student-t random variable with location parameter $\mu$, scale parameter $\sigma > 0$ and degrees of freedom parameter $\nu > 0$ with a probability density function given by:
\begin{equation}\label{h1}
\begin{array}{ll}
g(w) = \displaystyle\frac{\Gamma(\frac{\nu+1}{2})}{\sqrt{\pi \nu} \sigma \Gamma(\frac{\nu}{2})} \left(1 + \frac{1}{\nu}\left(\frac{w-\mu}{\sigma}\right)^2\right)^{-(\nu+1)/2},  & \forall w\in \Re.
\end{array}
\end{equation}
The distribution of the censored student-t random variable in (\ref{g1}) is defined by a discrete probability mass function:
\begin{equation*} \label{e1}
\displaystyle \mathbb{P}(\tilde{d}^* = 0) =  \displaystyle \frac{m_2-m_1^2}{m_2},
\end{equation*}
and a continuous probability density function given by:
\begin{equation*} \label{e3}
\displaystyle {{f}^*}(w) =  \displaystyle\frac{1}{2}\frac{m_2-m_1^2}{({w^2-2m_1w+m_2})^{3/2}}, ~\forall w \geq \frac{m_2}{2m_1}.
\end{equation*}
A straightforward calculation indicates that for this random variable, the second moment is infinite, that is:
\begin{equation*}\label{i1}
\begin{array}{rllll}
\displaystyle \mathbb{E}_{F^*}[\tilde{d}^{*2}] & = & \infty.
\end{array}
\end{equation*}
This implies that to recreate the optimal order quantity of the distributionally robust newsvendor model under the assumption of a known mean and variance, we need to solve a standard newsvendor problem with a censored student-t distribution with parameter $2$. Thus the demand distribution in the standard newsvendor model has to possess infinite variance which is an heavy-tail property to recreate the distributionally robust newsvendor solution with a finite variance. \textcolor{black}{To the best of our knowledge, this observation has been made for real-valued random variables with known first and second moments in Theorem 1.10.7 on page 57 in M\"{u}ller and Stoyan \cite{muller} with its application to inventory problems discussed in Gallego \cite{gallego98}. Under the assumption that the set of distributions consists of real-valued random variables (not necessarily nonnegative) with fixed first moment $m_1$ and second moment $m_2$, M\"{u}ller and Stoyan \cite{muller} showed that the distribution $F^*$ is given as:
\begin{equation} \label{f11111}
\begin{array}{rlllll}
\displaystyle F^{*}(w) = \mathbb{P}(\tilde{d}^* \leq w) & = & \displaystyle\frac{1}{2} \left(1 + \frac{w-m_1}{{\sqrt{w^2-2m_1w+m_2}}}\right), & \forall w.
\end{array}
\end{equation}
which simply corresponds to the first term in (\ref{f1}). The representation in (\ref{f1}) generalizes this to non-negative demand random variables, which is of interest in the newsvendor setting.}

Since the pioneering work of Scarf \cite{scarf58}, there have been several generalizations of the distributionally robust newsvendor model to new ambiguity sets. While for some of these ambiguity sets (see Ben-Tal and Hochman \cite{bental0,bental}, Natarajan, Uichanco and Sim \cite{Nat17}, \textcolor{black}{Bertsimas, Gupta and Kallus \cite{kallus}, Chen et. al. \cite{chen}}), the problem is solvable in a closed form-manner, in most cases, numerical optimization techniques are needed.
Bertsimas and Popescu \cite{bertsimas02,bertsimas05} and Lasserre \cite{jean02} developed semidefinite optimization techniques to compute the worst-case bound when the set of distributions is defined by a set of fixed moments up to an integer degree $n \in \mathbb{Z}_+$:
\begin{equation*}
\mathcal{{F}}_{1,2,\ldots,n} = \left\{F\in {\mathbb M}(\Re_+): \displaystyle \int_{0}^\infty \mathrm dF(w) = 1, \displaystyle\int_{0}^\infty w^i\mathrm dF(w) = m_i,~i=1,2,\ldots,n\right\}.
\end{equation*}
An application of duality in moment problems implies that the distributionally robust newsvendor problem is solvable as a semidefinite program. While some attempts has been made to solve this problem analytically for $n = 3$ and $n = 4$, the tight worst-case bounds have complicated expressions involving roots of cubic and quartic equations (see Jansen, Haezendonck, and Goovaerts \cite{jansen}, Zuluaga, Pena and Du \cite{zuluaga}). Popescu \cite{pop} generalized these bounds by incorporating additional structural properties such as symmetry and unimodality to the ambiguity sets. Semidefinite optimization and second order conic optimization methods have been developed to find the worst-case bounds for such problems under structural information (see Popescu \cite{pop}, Van Parys, Goulart and Kuhn \cite{van}, Li, Jiang and Mathieu \cite{li}). In general, for these problems, there is an absence of closed form solutions and hence finding an explicit representation of the distribution $F^{*}$ does not appear to be straightforward.

Lam and Mottet \cite{lam} have recently proposed an ambiguity set, where information on the tail probability of the random variable beyond a given threshold, the density function at the threshold and the left derivative of the density function at the threshold is known with an additional assumption that the tail density function is convex. \textcolor{black}{Under this ambiguity set, they showed that the worst-case distribution is either extremely light-tailed or extremely heavy-tailed depending on a characterization of the limit of the objective function at the tail and proposed the use of low dimensional nonlinear optimization methods to compute the corresponding bound. In the newsvendor problem, their result implies that the worst-case density function is in fact a piecewise linear function beyond the threshold, with at most two linear pieces and is a extremely light-tailed distribution (see Theorem 4 and Section 7 in Lam and Mottet \cite{lam}).} In contrast to their approach which models the tail behavior in the ambiguity set, we focus on ambiguity sets with moment information and characterize the tail behavior implied by the model. Ben-Tal et al. \cite{bentalms} studied newsvendor problems with $\phi$-divergence based ambiguity sets around a reference discrete distribution and proposed a convex optimization formulation to solve the distributionally robust optimization problem. Blanchet and Murthy \cite{murthy} build on this model to show that under the assumption that the reference measure is a distribution such as exponential, Weibull or Pareto distribution, the use of an ambiguity set with a Kullback-Leibler distance measure contains distributions where the tail probabilities decay at a very slow rate for which the worst-case expected costs might be infinite. To overcome this pessimism, they proposed alternative ambiguity sets using the Renyi divergence measure for which the worst-case tails are heavier than the reference measure, but not as heavy as the Kullback-Leibler divergence measure. In contrast, we focus on the moment ambiguity set in this paper. \textcolor{black}{A related recent stream of literature has focused on solving distributionally robust optimization problems, including the newsvendor model, using ambiguity sets defined around an empirical distribution with the Wasserstein distance (see Esfahani and Kuhn \cite{esf}, Gao and Kleywegt \cite{gao} and Blanchet and Murthy \cite{murthy1}) with convex optimization methods. Under a light-tailed assumption on the underlying distribution from which the empirical distribution is generated, it is possible to obtain finite sample guarantees with this ambiguity set. However, these results do not extend in a straightforward manner to heavy-tailed distributions. Furthermore, under the assumption that the demand can take any value in $[0,\infty)$, the distributionally robust order quantity with the Wasserstein distance is the same as the solution to the empirical distribution (see Remark 6.7 in Esfahani and Kuhn \cite{esf})} We next review empirical research that provides evidence on the existence of heavy-tailed distributions in demand data.

\subsection{Empirical Evidence of Heavy Tailed Demand} \label{emp}
There has been growing evidence in the recent years that heavy-tailed demand distributions can occur in practice and has to be better accounted for in operational settings. Clauset, Shalizi and Newman \cite{clauset} in their well-cited study on the presence of power law distributions in real world datasets, developed a set of statistical tests to help validate if the data follows a power law. They studied twenty four different datasets across a broad range of disciplines in physics, earth sciences, biology and engineering where prior research in these domains had conjectured that the data followed a power law. Of these in seventeen of the datasets, the statistical tests provided evidence that the power law hypothesis was a reasonable one and could not be firmly ruled out while in the remaining seven datasets, the p-values were too small and with reasonable confidence, the power law could be ruled out. Among these datasets, two of them which are particularly relevant to demand models are: a) the number of calls received by customers of AT\&T long distance telephone in the United States during a single day and b) the number of copies of bestselling books sold in the United States during the period 1895 to 1965. In both these datasets, the authors found strong evidence that the power law tail is a reasonable model in comparison to the exponential and stretched exponential distributions but at the same time it was not possible to rule out other heavy-tail distributions such as the lognormal distribution as a possible fit. In another study, Gaffeo, Scorcu and Vici \cite{gaffeo} analyzed the demand of books in Italy and found that for the three categories - local novels, foreign novels and non-fiction books, a power law distribution where the exponent is typically lesser than $2$ is a good fit to the right tail of the demand distribution. Bimpkis and Markakis \cite{bimkis} used the ratings of movies on Netflix as an approximation to the demand of a movie and estimated a power law distribution with an exponent of around 1.04 for the number of movies per number of distinct ratings. Using data from a North American retailer over a one year period with 626 products, their statistical tests showed that the exponential and normal distributions were a poor fit to the data while the power law provided a reasonable approximation to the dataset. Building on this observation, they showed that for a class of heavy-tailed stable demand distributions, the benefits from pooling in inventory can be much lower than that predicted for normally distributed demands. Natarajan, Sim and Uichanco \cite{Nat17} used data from an European automotive manufacturer with 36 spare part SKUs over a one year period. In fitting demand distributions to the data over 17 different families, they found that the best-fit was often obtained by heavy-tailed distributions such as Pareto, extreme value or t-distributions. Chevalier and Goolsbee \cite{chevalier} used publicly available data on sales ranks of books from the online book retailer Amazon.com to obtain an estimate on the sales quantity of the books. In their numerical experiments, they identified that the Pareto distribution with a parameter of 1.2 was a reasonable approximation to the demand data. The Internet has particularly fueled the phenomenon of the long tail where niche products gives rise to a large share of the total sales for online retailers, popularly referred to as the long-tail phenomenon (see Anderson \cite{anderson}, Brynjolfsson, Hu and Simester \cite{bryn}). Empirical evidence in this literature seems to suggest that when Pareto distributions are used to model the demand, the exponent is strictly greater than 1 and possesses finite mean but might not necessarily possess finite variance. The ambiguity set we consider in the next section is inspired from this empirical evidence.

\section{Model with the First and $\alpha$th Moment}\label{sec2}

Consider an ambiguity set defined as follows:
\begin{equation} \label{nmom}
\mathcal{F}_{1,\alpha} = \left\{F \in {\mathbb M}(\Re_+): \displaystyle\int_{0}^\infty \mathrm dF(w) = 1,~ \displaystyle\int_{0}^\infty w\: \mathrm d F(w) = m_1,~ \displaystyle\int_{0}^\infty w^{\alpha}\: \mathrm d F(w) = m_{\alpha}\right\},
\end{equation}
where $\alpha >1$ is an arbitrary real number and $m_1$ and $m_{\alpha}$ are the first and the $\alpha$th moment respectively, satisfying $m_{\alpha} > m_1^{\alpha} > 0$. Note that for $m_{\alpha} = m_1^{\alpha}$, the only feasible distribution in the ambiguity set is the demand $m_1$ that occurs with probability $1$, which is a trivial case to deal with. We discuss a few features of this ambiguity set next:
\texitem{(a)} The ambiguity set $\mathcal{F}_{1,\alpha}$ can be used with any real value $\alpha > 1$, not necessarily just an integer. Clearly, when $\alpha=2$, this corresponds to the original model of Scarf \cite{scarf58}. This provides a natural generalization of the set $\mathcal{F}_{1,2}$, but allows for the possibility of the ambiguity set to specify more light tailed ($\alpha > 2$) or more heavy tailed distributional information ($\alpha < 2$) than Scarf's model allows. In conjunction with the empirical evidence discussed in Section \ref{emp}, assuming the knowledge of a finite mean also seems reasonable in most applications involving real demand data.
\texitem{(b)} The ambiguity set preserves the simplicity of Scarf's \cite{scarf58} moment ambiguity set as it is parameterized by the choice of only three parameters - $m_1$, $m_{\alpha}$ and $\alpha$. \textcolor{black}{The choice of $\alpha$ can be estimated from sample data using nonparametric hypothesis tests such as the one proposed in Fedotenkov \cite{fedotenkov} where the null hypothesis is that the $\alpha$th moment exists while the alternate hypothesis is that $\alpha$th moment does not exist. This nonparametric bootstrap test builds on the observation of Derman and Robbins \cite{derman} that when a certain moment is infinite the moments of the sample from the distribution grows faster than the moments of subsamples of a smaller size, under certain regularity assumptions. The availability of such statistical tests aids in calibrating $\alpha$ from the data.}

\textcolor{black}{Under this ambiguity set, we are interested in solving:
\begin{equation}
\begin{array}{lll} \label{cnew}
\displaystyle \min_{q \in \Re_{+}} & \displaystyle  \left((1-\eta)q + \sup_{F \in \mathcal{F}_{1,\alpha}}\mathbb{E}_{F}[\tilde{d}-q]_+\right).
\end{array}
\end{equation}
The flexibility of allowing for any $\alpha > 1$ however leads to challenges in solving the inner moment problem in closed form, unlike the $\alpha = 2$ case.
To see why, we consider the primal formulation for the worst-case expected value $\sup_{F \in \mathcal{F}_{1,\alpha}} \:\mathbb{E}_F[\tilde{d} - q]_+$ which is given as:
\begin{equation} \label{primalf0}
\begin{array}{rlll}
\sup & \displaystyle \int_{0}^\infty [w - q]_+d F(w) &\\
\textrm{s.t.} & \displaystyle\int_{0}^\infty \mathrm d F(w) = 1,  & \\
& \displaystyle\int_{0}^\infty w\mathrm d F(w) = m_1,  & \\
& \displaystyle\int_{0}^\infty w^{\alpha} \mathrm d F(w) = m_{\alpha}, \\
& \displaystyle F\in {\mathbb M}(\Re_+),
\end{array}
\end{equation}
and the corresponding dual formulation given as:
\begin{equation} \label{dualf}
\begin{array}{rlll}
\inf & y_0 + y_1 m_1 + y_{\alpha} m_{\alpha} &\\
\textrm{s.t.} & y_0 + y_1 w + y_{\alpha} w^{\alpha} \geq 0,  & \forall w \geq 0,\\
& y_0 + y_1 w + y_{\alpha} w^{\alpha} \geq w - q, & \forall w \geq 0,
\end{array}
\end{equation}
where $y_0$ is the dual variable for the constraint that the total probability is equal to 1 and $y_1$ and $y_{\alpha}$ are the dual variables for the first and the $\alpha$th moment constraints respectively. The standard attempt to find the closed form solution to such a problem is to try and find the explicit roots of the polynomial equations in the dual formulation which are of the form $aw^{\alpha}+bw+c=0$, if possible. This is easy to do, for example, when $\alpha = 2$ (see Scarf \cite{scarf58}, Bertsimas and Popescu \cite{bertsimas02}), using the solution to quadratic equations. Unfortunately, the Abel-Ruffini impossibility theorem states that there is no solution in terms of radicals (involving only taking roots and the four basic arithmetic operations) for polynomial equations of degree five or higher with arbitrary coefficients. Hence, the state of art approaches to solve problems with higher order moments is through semidefinite optimization (see Bertsimas and Popescu \cite{bertsimas05}, Lasserre \cite{jean02}). Furthermore, even with structured polynomials in the dual, as in our case, it is not possible to find closed form solutions in terms of radicals.  For example, the Bring-Jerrard quintic equations of the form $aw^{5}+bw+c=0$ (where $\alpha$ = 5) does not have a solution in terms of radicals for general values of $a, b$ and $c$ (an example of such a quintic equation is $w^5-w+1=0$ which is discussed on page 121 in Lang \cite{lang}). This makes the solution of the dual formulation in closed form for general parameter values very unlikely. 
In the next section, we consider a very special case where for a range of $q$, it is possible to obtain a closed form expression. This helps us characterize the optimal order quantity in the low service level regime by showing that there exists a threshold below which for all critical ratios, it is optimal to order nothing. We then consider the high service level regime and focus on finding lower and upper bounds on the worst-case expected value that is valid beyond a certain value of the order quantity $q$. Our approach is based on constructing approximately optimal primal-dual solutions that attains the bounds in this regime. Building on this, we provide in Section \ref{sec4}, a characterization of the tail behavior of the distribution of $F^*$ and the optimal order quantity for high critical ratios.
}

\textcolor{black}{
\subsection{Small Values of $q$} \label{lb1}
We consider a special case where the moment problem (\ref{primalf0}) can be solved in closed form for any real number $\alpha > 1$. Building on this, we provide a characterization of the optimal solution to the distributionally robust newsvendor problem (\ref{cnew}) for small values of $\eta$.
\begin{prop}\label{prop1a}
Given an ambiguity set $\mathcal{F}_{1,\alpha}$, the worst-case expected value is given as:
\begin{equation} \label{new1}
\begin{array}{rllll}
\displaystyle \sup_{F \in \mathcal{F}_{1,\alpha}} \:\mathbb{E}_F[\tilde{d} - q]_+ = m_1 - q \left(\frac{m_1^{\alpha}}{m_{\alpha}}\right)^{1/(\alpha-1)}, & \textrm{if } \displaystyle 0\leq q \leq \left(\frac{\alpha-1}{\alpha}\right) \left(\frac{m_{\alpha}}{m_1}\right)^{1/(\alpha-1)}.
\end{array}
\end{equation}
The worst-case demand distribution in this case is given as:
\begin{equation} \label{new2}
\displaystyle \tilde{d}^*= \left\{\begin{array}{llr}
\displaystyle 0, & \textrm{w.p. } \displaystyle 1-\left(\frac{m_1^{\alpha}}{m_{\alpha}}\right)^{1/(\alpha-1)},\\
\displaystyle \left(\frac{m_{\alpha}}{m_1}\right)^{1/(\alpha-1)}, & \textrm{w.p. } \displaystyle \left(\frac{m_1^{\alpha}}{m_{\alpha}}\right)^{1/(\alpha-1)}.
\end{array}\right.
\end{equation}
\end{prop}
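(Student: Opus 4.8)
The plan is to prove the formula by exhibiting a matching pair of primal- and dual-feasible solutions to the moment problem (\ref{primalf0})--(\ref{dualf}), so that weak duality forces both to be optimal. Write $z := (m_{\alpha}/m_1)^{1/(\alpha-1)}$ for the stated support point. First I would verify that the two-point law in (\ref{new2}) is primal feasible: putting mass $p := (m_1^{\alpha}/m_{\alpha})^{1/(\alpha-1)}$ at $z$ and $1-p$ at $0$, the identity $z^{\alpha-1} = m_{\alpha}/m_1$ gives $pz = m_1$ and $pz^{\alpha} = m_{\alpha}$ simultaneously, so all three moment constraints hold (and $p < 1$ follows from $m_{\alpha} > m_1^{\alpha}$). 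Since the admissible range $q \le \frac{\alpha-1}{\alpha} z < z$ guarantees $[z-q]_+ = z-q$ while $[0-q]_+ = 0$, the objective of this distribution equals $p(z-q) = m_1 - q(m_1^{\alpha}/m_{\alpha})^{1/(\alpha-1)}$, which is exactly the right-hand side of (\ref{new1}). This already yields the lower bound.

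For the upper bound I would construct a dual solution guided by complementary slackness: the polynomial $h(w) := y_0 + y_1 w + y_{\alpha} w^{\alpha}$ should vanish at the support point $w=0$ (where the active piece of $[w-q]_+$ is $0$) and be tangent to the line $w-q$ at $w=z$. These conditions read $y_0 = 0$, $y_1 z + y_{\alpha} z^{\alpha} = z-q$, and $y_1 + \alpha y_{\alpha} z^{\alpha-1} = 1$; solving the resulting $2\times 2$ linear system gives
\[
y_0 = 0, \qquad y_1 = 1 - \frac{\alpha q}{(\alpha-1)z}, \qquad y_{\alpha} = \frac{q}{(\alpha-1)z^{\alpha}}.
\]
Using $m_{\alpha}/z^{\alpha} = m_1/z$ (again from $z^{\alpha-1} = m_{\alpha}/m_1$), a short computation collapses the dual objective $y_0 + y_1 m_1 + y_{\alpha} m_{\alpha}$ to $m_1 - q\,m_1/z = m_1 - q(m_1^{\alpha}/m_{\alpha})^{1/(\alpha-1)}$, matching the primal value.

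It then remains to check that these multipliers are dual feasible, which is where the threshold on $q$ enters and is the crux of the argument. For the nonnegativity constraint $h(w)\ge 0$ on $[0,\infty)$, I would note $h(0)=0$ and $h'(w) = y_1 + \alpha y_{\alpha} w^{\alpha-1}$; since $y_{\alpha} > 0$, this derivative is nonnegative precisely when $y_1 \ge 0$, i.e.\ when $q \le \frac{\alpha-1}{\alpha}z$, so on the stated range $h$ is nondecreasing from $h(0)=0$ and hence nonnegative. For the second constraint I would set $g(w) := h(w) - (w-q)$ and observe that $g$ is convex on $[0,\infty)$ (a positive multiple of $w^{\alpha}$ plus an affine term, with $\alpha>1$), that $g(0)=q\ge 0$, and that by construction $g(z)=g'(z)=0$; convexity then makes $z$ a global minimizer with value $0$, so $g \ge 0$ everywhere. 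With both dual constraints verified, weak duality for the moment problem closes the gap between the two bounds and establishes (\ref{new1}), with (\ref{new2}) identified as the worst-case distribution. The main obstacle is the dual-feasibility check, and in particular recognizing that the range $q \le \frac{\alpha-1}{\alpha}z$ is exactly the condition $y_1\ge 0$ under which the nonnegativity constraint survives; for larger $q$ the multiplier $y_1$ turns negative and the two-point construction breaks down, which is precisely why the closed form is confined to small order quantities.
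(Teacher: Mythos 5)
Your proposal is correct and follows essentially the same route as the paper: it certifies the claimed value by exhibiting the two-point primal distribution together with the identical dual multipliers $y_0=0$, $y_1 = 1-\frac{\alpha q}{(\alpha-1)z}$, $y_{\alpha} = \frac{q}{(\alpha-1)z^{\alpha}}$ (the paper states them directly, you derive them from complementary slackness), and verifies dual feasibility on the stated range of $q$ before invoking weak duality. The only cosmetic difference is that the paper checks the first dual constraint by noting all three coefficients are nonnegative, whereas you argue via monotonicity of $h$ from $h(0)=0$; both are fine.
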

\begin{proof}
We show tightness of the worst-case expected value by constructing a primal and dual feasible solution that attains it for the specified range of $q$. Observe that the demand distribution in (\ref{new2}) is a feasible distribution in the set $\mathcal{F}_{1,\alpha}$, since $\mathbb{E}[\tilde{d}^{*}] = m_1$ and $\mathbb{E}[\tilde{d}^{*\alpha}] = m_{\alpha}$ and attains the expected value in (\ref{new1}). We next construct a dual solution as follows:
\begin{equation}\label{dualnew}
\begin{array}{rcl}
y_0 & = & \displaystyle 0,\\
y_1 & = & \displaystyle 1- q\left(\frac{\alpha}{\alpha-1}\right)\left(\frac{m_1}{m_{\alpha}}\right)^{1/(\alpha-1)},\\
y_{\alpha} & = & \displaystyle \frac{q}{\alpha-1}\left(\frac{m_1}{m_{\alpha}}\right)^{\alpha/(\alpha-1)}.
\end{array}
\end{equation}
We first validate that this solution is dual feasible for $q \in [0,((\alpha-1)/\alpha)(m_{\alpha}/m_1)^{1/(\alpha-1)}]$. To see this, observe that in the specified range for $q$, we have $y_1 \geq 0$. Furthermore since $y_0 = 0$ and $y_{\alpha} \geq 0$, the first dual feasibility constraint in (\ref{dualf}) is satisfied in a straightforward manner. The second dual feasibility constraint can be expressed as:
\begin{equation}\label{newbd}
 \displaystyle\min_{w \geq 0} \left(y_0 + q + (y_1 - 1) w + y_{\alpha} w^{\alpha} \right) \geq 0.
 \end{equation}
 This constraint is satisfied at equality for $q = 0$, since $y_1 = 1$ and $y_{\alpha} = 0$ in this case. Next, we focus on the case with $q > 0$. Since $y_0 = 0$, $y_{\alpha} > 0$ and $y_1 \in [0,1)$, the minimum value in (\ref{newbd}) is attained at:
\begin{equation*}
\begin{array}{rlll}
w^* & = & \displaystyle \left(\frac{1-y_1}{\alpha y_{\alpha}}\right)^{1/(\alpha-1)}, \\
& = & \displaystyle \left(\frac{m_{\alpha}}{m_1}\right)^{1/(\alpha-1)}.
\end{array}
\end{equation*}
Substituting in the given choice of the dual variables and $w^*$, the left hand side of (\ref{newbd}) reduces to:
\begin{equation*}
\begin{array}{rlll}
\displaystyle y_0 + q + (y_1 - 1) w^{*} + y_{\alpha} w^{*\alpha} & = & \displaystyle q -  q\left(\frac{\alpha}{\alpha-1}\right)+ \frac{q}{\alpha-1}, \\
& = & \displaystyle 0,
\end{array}
\end{equation*}
implying the feasibility of the second dual constraint. Finally, we verify that this dual feasible solution is optimal, since the objective value of the dual feasible solution is given as:
\begin{equation*}
\begin{array}{rlll}
\displaystyle y_0 + y_1 m_1 + y_{\alpha}m_{\alpha} & = & \displaystyle  m_1 - q \left(\frac{m_1^{\alpha}}{m_{\alpha}}\right)^{1/(\alpha-1)},
\end{array}
\end{equation*}
which is equal to the objective value of the primal solution.
\end{proof}
When $\alpha = 2$, Proposition \ref{prop1a} reduces precisely to the second term in the worst-case expected value in (\ref{e2}) as developed by Scarf \cite{scarf58}. Proposition \ref{prop1a} indicates that for any $\alpha > 1$, there is always a range of $q$ around 0, where the worst-case value is a linearly decreasing function of $q$. Building on this closed form expression, we can identify a characterization of the optimal order quantity for small values of the critical ratio $\eta$ where it optimal to order zero units as follows.
\begin{prop}\label{prop1-smallq}
Define the threshold value of the critical ratio, ${\eta}_0 = 1-\left({m_1^{\alpha}}/{m_{\alpha}}\right)^{1/(\alpha-1)} \in (0,1)$.
The optimal order quantity to the distributionally robust newsvendor problem in (\ref{cnew}) satisfies the following properties:
 \texitem{(a)} For any critical ratio in the range $\eta \in [0,{\eta}_0)$, the optimal order quantity is given as:
\begin{equation*} \label{e}
\begin{array}{rllll}
\displaystyle q^*  =  0. &
\end{array}
\end{equation*}
 \texitem{(b)} For any critical ratio in the range $\eta \in ({\eta}_0,1)$, the optimal order quantity is strictly positive and satisfies the condition:
 \begin{equation*} \label{e}
\begin{array}{rllll}
\displaystyle q^* \geq q_0 := \left(\frac{\alpha-1}{\alpha}\right) \left(\frac{m_{\alpha}}{m_1}\right)^{1/(\alpha-1)}. &
\end{array}
\end{equation*}
 \texitem{(c)} When $\eta = {\eta}_0$, the set of optimal order quantities contains all the values in the range $[0,q_0]$.
\end{prop}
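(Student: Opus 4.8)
The plan is to combine the global convexity of the worst-case cost with the explicit linear formula supplied by Proposition \ref{prop1a}. I would write the objective of (\ref{cnew}) as $J(q) := (1-\eta)q + \Pi_{1,\alpha}(q)$, where $\Pi_{1,\alpha}(q) := \sup_{F \in \mathcal{F}_{1,\alpha}}\mathbb{E}_{F}[\tilde{d}-q]_+$. As recalled in Section \ref{related}, $\Pi_{1,\alpha}$ is a non-increasing convex function with $\lim_{q\to\infty}\Pi_{1,\alpha}(q)=0$. Since adding the affine term $(1-\eta)q$ preserves convexity and, because $\eta<1$, forces $J(q)\to\infty$ as $q\to\infty$, the function $J$ is convex, continuous and coercive on $[0,\infty)$, so a global minimizer exists. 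All three parts will then reduce to locating this minimizer.

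The key step is to substitute the closed form of Proposition \ref{prop1a} into $J$ on the interval $[0,q_0]$. Writing $\beta := (m_1^{\alpha}/m_{\alpha})^{1/(\alpha-1)}$ so that $\eta_0 = 1-\beta$, I obtain for every $q \in [0,q_0]$ that
\begin{equation*}
J(q) = (1-\eta)q + m_1 - \beta q = m_1 + (\eta_0 - \eta)\,q.
\end{equation*}
Hence $J$ is affine on $[0,q_0]$ with slope $\eta_0 - \eta$, and the entire argument reduces to a sign analysis of this slope together with convexity on all of $[0,\infty)$.

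For part (a), when $\eta<\eta_0$ the slope is strictly positive, so the right derivative satisfies $J'_{+}(0)=\eta_0-\eta>0$; convexity makes $J'_{+}$ non-decreasing, so $J$ is non-decreasing on the whole half-line and $q^*=0$ is optimal. For part (b), when $\eta>\eta_0$ the slope is strictly negative, so $J$ is strictly decreasing on $[0,q_0]$, giving $J(q)>J(q_0)$ for every $q\in[0,q_0)$; no such point can be optimal, so $q^*\geq q_0$, and since $q_0>0$ this also yields strict positivity. For part (c), when $\eta=\eta_0$ the slope vanishes and $J\equiv m_1$ on $[0,q_0]$, so $J'_{-}(q_0)=0$; convexity gives $J'_{+}(q_0)\geq 0$, whence $J$ is non-decreasing on $[q_0,\infty)$ and $J(q)\geq m_1=J(q_0)$ for all $q\geq 0$, showing that every point of $[0,q_0]$ attains the minimum value $m_1$.

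There is no serious obstacle here; the only point requiring care is making the one-sided-derivative arguments at the endpoints $0$ and $q_0$ airtight. This is handled by the standard fact that the difference quotients of a convex function are monotone, so $J'_{+}$ is non-decreasing and $J'_{-}(q_0)\leq J'_{+}(q_0)$, after which all three cases follow mechanically from the affine formula above.
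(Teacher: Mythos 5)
Your proposal is correct and follows essentially the same route as the paper: substitute the closed form from Proposition \ref{prop1a} to see that the objective is affine with slope $\eta_0-\eta$ on $[0,q_0]$, then invoke global convexity to place the minimizer according to the sign of that slope. The only difference is cosmetic—you make the one-sided-derivative bookkeeping at $0$ and $q_0$ explicit, which the paper handles more informally by appeal to convexity and its Figure \ref{m31aaaa}.
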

\begin{proof}
The optimal solution to the distributionally robust newsvendor problem in (\ref{cnew}) is given as:
\begin{equation}
\begin{array}{lll} \label{newa}
\displaystyle \min\left\{\min_{0 \leq q \leq q_0} (1-\eta)q + \Pi_{1,\alpha}(q), \ \min_{q > q_0} (1-\eta)q + \Pi_{1,\alpha}(q)\right\},
\end{array}
\end{equation}
where $\Pi_{1,\alpha}(q) = \sup_{F \in \mathcal{F}_{1,\alpha}} \:\mathbb{E}_{F}[\tilde{d} - q]_+$. Using Proposition \ref{prop1a}, the first term in this expression reduces to:
\begin{equation}
\begin{array}{rlll} \label{newaaa}
\displaystyle \min_{0 \leq q \leq q_0} (1-\eta)q + \Pi_{1,\alpha}(q) & = & \displaystyle \min_{0 \leq q \leq q_0} m_1 - q \left(\eta-1+\left(\frac{m_1^{\alpha}}{m_{\alpha}}\right)^{1/(\alpha-1)}\right), \\
& = &  \displaystyle \min_{0 \leq q \leq q_0} m_1 - q \left(\eta-\eta_0\right).
\end{array}
\end{equation}
The minimizer in (\ref{newaaa}) is $q^* = 0$ when $\eta < \eta_0$ and the corresponding objective value is $m_1$. Since the objective function in (\ref{cnew}) is convex in $q$ and increasing in the range $[0,q_0]$, the global minimum is also attained at $q^* = 0$ (see Case (a) in Figure \ref{m31aaaa}). Similarly, the minimizer in (\ref{newaaa}) is $q_0$ when $\eta > \eta_0$, since the function is strictly decreasing in this range. Moreover, since the objective function is convex, the global minimum in this case is attained at some $q^* \geq q_0 > 0$ (see Case (b) in Figure \ref{m31aaaa}). Finally, when $\eta = \eta_0$, the minimum value in (\ref{newaaa}) is attained for all $q \in [0,q_0]$ (see Case (c) in Figure \ref{m31aaaa}). Furthermore, since the objective function is convex, the global minimum must also be attained at these values.
\end{proof}
\begin{figure}[htbp]
\centering
\includegraphics[width=18cm] {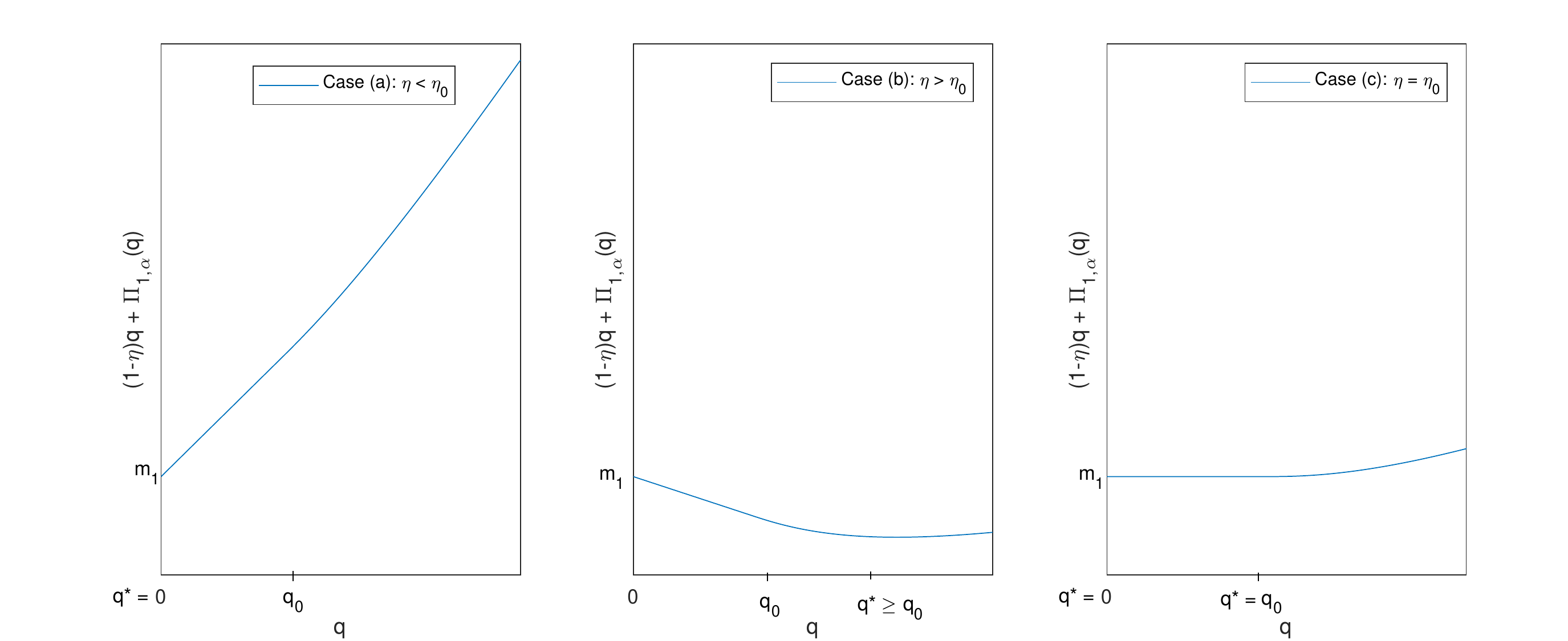}
\caption{Case (a) provides the optimal solution $q^* = 0$ for $\eta < \eta_0$, case (b) provides the optimal solution $q^* \geq q_0 > 0$ for $\eta > \eta_0$ case while case (c) provides the set of optimal solutions for $\eta = \eta_0$.}
\label{m31aaaa}
\end{figure}
Proposition \ref{prop1-smallq} generalizes the result from Scarf's model to identify the range of critical ratios, under which it is optimal to order zero units for any real $\alpha > 1$. This shows that for any finite $\alpha$, regardless of how high the order of the specified moment in the ambiguity set is, there is always a certain range of small critical ratios where it is optimal to order zero for the moment based ambiguity set. The corresponding $F^*$ distribution thus has a finite probability mass at a demand of zero. Proposition \ref{prop1-smallq} also illustrates that when critical ratios are larger than the threshold value, the optimal order quantity is strictly positive. In the next two sections, we identify lower and upper bounds on the worst-case value that help characterize $q^*$ for large values of the critical ratio.}
\subsection{\textcolor{black}{Lower Bound for Large Values of $q$}} \label{lb}
To develop the lower bound, we first consider a related ambiguity set that was studied by Grundy \cite{grundy91} with a fixed $\alpha$th moment only:
\begin{equation}
\mathcal{{F}}_{\alpha} = \left\{F \in {\mathbb M}(\Re_+): \displaystyle\int_{0}^\infty \mathrm dF(w) = 1,~ \displaystyle\int_{0}^\infty w^{\alpha}\:\mathrm dF(w) = m_{\alpha}\right\},
\end{equation}
where $\mathcal{{F}}_{1,\alpha} \subseteq \mathcal{{F}}_{\alpha}$. While Grundy \cite{grundy91} evaluated the worst-case value for this ambiguity set in an option pricing context, the model remains largely unexplored in the newsvendor context. Grundy \cite{grundy91} characterized the unique two point distribution that attains the bound in $\sup_{F \in \mathcal{F}_{\alpha}} \:\mathbb{E}_F[\tilde{d} - q]_+$. Given a value $q > (\alpha-1)m_{\alpha}^{1/\alpha}/{\alpha}$, the worst-case demand distribution was characterized as follows:
\begin{equation} \label{d2}
\tilde{d}^*_q = \left\{\begin{array}{llr}
\displaystyle \frac{q\alpha}{\alpha-1}, & \textrm{w.p. } \displaystyle\frac{{(\alpha-1)}^{\alpha}m_{\alpha}}{\alpha^{\alpha}q^{\alpha}},\\
\displaystyle 0, & \textrm{otherwise},
\end{array}\right.
\end{equation}
while for $0 \leq q \leq (\alpha-1)m_{\alpha}^{1/\alpha}/{\alpha}$, the worst-case demand distribution is degenerate with the mass at the point $m_{\alpha}^{1/\alpha}$. The corresponding worst-case expected value is given as:
\begin{equation*}
\displaystyle\sup_{F \in \mathcal{F}_{\alpha}} \:\mathbb{E}_F[\tilde{d} - q]_+ = \left\{\begin{array}{lll}
\displaystyle\frac{m_{\alpha}}{\alpha} \left(\frac{\alpha-1}{{\alpha}q}\right)^{\alpha-1}, & \textrm{if }q > \displaystyle\frac{\alpha-1}{\alpha} m_{\alpha}^{1/\alpha},\\
\displaystyle m_{\alpha}^{1/\alpha} - q, & \textrm{if }\displaystyle 0 \leq q \leq \displaystyle\frac{\alpha-1}{\alpha} m_{\alpha}^{1/\alpha}.
\end{array}\right.
\end{equation*}
The worst-case distribution in this ambiguity set depends on $q$, as in Scarf's model. In the next proposition, we derive a lower bound on the worst-case expected value by modifying the two point distribution in (\ref{d2}) to a three point distribution to make it feasible for the ambiguity set $\mathcal{F}_{1,\alpha}$.
This brings us to our first main result that provides a lower bound on the worst-case expected value for large values of $q$.

\begin{prop}\label{prop1}
\textcolor{black}{Given an ambiguity set $\mathcal{F}_{1,\alpha}$, the following lower bound is valid:
\begin{equation}\label{eq:prop1}
\begin{array}{rlllll}
\displaystyle\sup_{{F} \in \mathcal{F}_{1,\alpha}}\  \mathbb{E}_{{F}}[\tilde{d}-q]_+ & \geq  &\displaystyle\frac{(m_{\alpha} - m_1^{\alpha})}{\alpha^{\alpha} q^{\alpha-1}} (\alpha-1)^{\alpha-1}, & \forall q > \underline{q}(m_1,m_{\alpha},\alpha),
 \end{array}
 \end{equation}
 for all values of $q$ greater than $\underline{q}(m_1,m_{\alpha},\alpha)$ where: \begin{equation}\label{eq:qunder}
\begin{array}{rlllll}
\underline{q}(m_1,m_{\alpha},\alpha) & := & \left( \left(\displaystyle\frac{m_{\alpha} - m_1^{\alpha}}{m_1}\right) \left(\displaystyle\frac{\alpha-1}{\alpha}\right)^{\alpha-1} + m_1^{\alpha-1}\right)^{1/(\alpha-1)}.
  \end{array}
 \end{equation}
 }
\end{prop}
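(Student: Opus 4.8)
The plan is to prove the bound by exhibiting a single distribution in $\mathcal{F}_{1,\alpha}$ whose value of $\mathbb{E}[\tilde{d}-q]_+$ equals the right-hand side of \eqref{eq:prop1}; since the supremum is at least the value attained at any feasible distribution, this immediately yields the inequality. Following the remark preceding the statement, I would start from Grundy's two-point distribution \eqref{d2}, which already matches the $\alpha$th moment $m_\alpha$ but whose mean is in general not $m_1$, and convert it into a three-point distribution that additionally matches $m_1$.

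Concretely, I keep the upper support point $h:=q\alpha/(\alpha-1)$ from \eqref{d2} but reduce its probability to $p_h:=(m_\alpha-m_1^{\alpha})/h^{\alpha}$, so that this atom contributes exactly $m_\alpha-m_1^{\alpha}$ to the $\alpha$th moment, rather than the full $m_\alpha$ carried by the atom in \eqref{d2}. The remaining mass is placed on $0$ and on a single new point $b>0$, chosen so that the pair $\{0,b\}$ carries the residual $\alpha$th moment $m_1^{\alpha}$ and the residual first moment $m_1-p_h h$. Solving these two equations gives $b^{\alpha-1}=m_1^{\alpha}/(m_1-p_h h)$ and $p_b=(m_1-p_h h)/b$, and the atom at $0$ receives $p_0=1-p_b-p_h$. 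By construction $\mathbb{E}[\tilde{d}^{*}]=m_1$ and $\mathbb{E}[\tilde{d}^{*\alpha}]=m_\alpha$, so the candidate lies in $\mathcal{F}_{1,\alpha}$ as soon as the three probabilities are admissible.

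I would then check feasibility and evaluate the objective. Positivity of $p_h$ is immediate from $m_\alpha>m_1^{\alpha}$. Writing $x:=p_h h/m_1$ and using $p_h h=(m_\alpha-m_1^{\alpha})\left((\alpha-1)/\alpha\right)^{\alpha-1}/q^{\alpha-1}$, one finds $x\in(0,1)$ exactly when $q$ exceeds a threshold implied by $q>\underline{q}$, which makes $m_1-p_h h>0$ and hence $b$ and $p_b$ well defined and positive; a short computation in fact gives $p_b=(1-x)^{\alpha/(\alpha-1)}$. Nonnegativity of $p_0=1-p_b-p_h$ then reduces to $(1-x)^{\alpha/(\alpha-1)}\le 1-x$ (a number in $[0,1]$ raised to a power larger than $1$ only decreases) together with $m_1<h$. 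The decisive requirement is $b\le q$: only then does the middle atom fail to contribute to $[\tilde{d}^{*}-q]_+$, so that $\mathbb{E}[\tilde{d}^{*}-q]_+=p_h(h-q)=p_h h/\alpha$, which simplifies directly to $(m_\alpha-m_1^{\alpha})(\alpha-1)^{\alpha-1}/(\alpha^{\alpha}q^{\alpha-1})$, exactly the stated bound.

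The main obstacle, and the step that pins down the threshold, is the inequality $b\le q$. After substituting the expressions for $b$ and $p_h h$ and clearing denominators, $b\le q$ is equivalent to $m_1 q^{\alpha-1}\ge m_1^{\alpha}+(m_\alpha-m_1^{\alpha})\left((\alpha-1)/\alpha\right)^{\alpha-1}$, which is precisely $q\ge\underline{q}(m_1,m_\alpha,\alpha)$ from \eqref{eq:qunder}. The cleanest way to organize the argument is to observe that $\underline{q}$ is exactly the value of $q$ at which the auxiliary point $b$ crosses $q$, and that this same condition simultaneously guarantees $m_1-p_h h>0$ (so the construction is well defined) and forces the objective to collapse to its single contributing term; everything else is routine algebra.
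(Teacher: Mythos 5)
Your proposal is correct and follows essentially the same route as the paper: it modifies Grundy's two-point distribution into the same three-point distribution supported on $\{0,\,b,\,q\alpha/(\alpha-1)\}$, matches the first and $\alpha$th moments by the same pair of equations, and identifies $\underline{q}$ from the requirement that the middle atom not exceed $q$. Your feasibility checks (in particular deriving $p_0\ge 0$ from $(1-x)^{\alpha/(\alpha-1)}\le 1-x$ and $m_1<h$) are the same as the paper's, just organized a bit more compactly.
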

\begin{proof}
We derive the lower bound through the construction of a three point feasible distribution. The proof is developed in two steps. In Step 1, we provide a three point distribution by a modification of the two point worst-case distribution in (\ref{d2}) such that the moment constraints are met while in Step 2, we show that this defines a valid probability distribution for large values of $q$. Evaluating the objective value for this distribution provides the desired lower bound in (\ref{eq:prop1}).\\
\noindent \textit{Step 1:} Consider a three point random variable $\tilde{d}$ with a distribution defined as follows:
\begin{equation} \label{3point}
\tilde{d} = \left\{\begin{array}{ll}
\displaystyle\frac{q\alpha}{\alpha-1}, &  \textrm{w.p. } \displaystyle\frac{(m_{\alpha} - m_1^{\alpha})}{\alpha^{\alpha}q^{\alpha}} {(\alpha-1)}^{\alpha},\\
w, & \textrm{w.p. } p,\\
0, &\displaystyle \textrm{w.p. } 1 - p - \frac{(m_{\alpha} - m_1^{\alpha})}{\alpha^{\alpha}q^{\alpha}} {(\alpha-1)}^{\alpha},
\end{array}\right.
\end{equation}
where we choose particular values of $w$ and $p$ as discussed next to ensure feasibility. 
Our choice of these values is such that one obtains a strictly positive value of $w$ that is less than $q$ and a probability $p$ such that the first and $\alpha$th moment constraints are met. To do so, we start by ensuring that the $\alpha$th moment constraint for this distribution is met as follows:
\begin{equation*}
\begin{array}{rcl}
m_{\alpha}  &=  &\displaystyle \mathbb{E}[\tilde{d}^{\alpha}], \\
& = &\displaystyle \left(\frac{q\alpha}{\alpha-1}\right)^{\alpha} \displaystyle\frac{(m_{\alpha} - m_1^{\alpha})}{\alpha^{\alpha}q^{\alpha}} {(\alpha-1)}^{\alpha} +  w^{\alpha} p,\\
& = &\displaystyle  m_{\alpha} - m_1^{\alpha} + w^{\alpha} p.
\end{array}
\end{equation*}
This gives rise to a condition that $w$ and $p$ must satisfy:
\begin{equation} \label{eq1}
\begin{array}{rll}
w^{\alpha}p & = & m_1^{\alpha}.
\end{array}
\end{equation}
We next ensure the first moment constraint for the distribution is met as follows:
\begin{equation*}
\begin{array}{rcl}
m_1  &=  &\displaystyle \mathbb{E}[\tilde{d}], \\
& = &\displaystyle \left(\frac{q\alpha}{\alpha-1}\right) \displaystyle\frac{(m_{\alpha} - m_1^{\alpha})}{\alpha^{\alpha}q^{\alpha}} {(\alpha-1)}^{\alpha} +  w p,\\
& = &\displaystyle  \displaystyle\frac{(m_{\alpha} - m_1^{\alpha})}{\alpha^{\alpha-1}q^{\alpha-1}} (\alpha-1)^{\alpha-1} + w p.
\end{array}
\end{equation*}
This gives rise to a second condition that $w$ and $p$ must satisfy:
\begin{equation} \label{eq2}
\begin{array}{rll}
wp & = & \displaystyle m_1 - \frac{(m_{\alpha} - m_1^{\alpha})}{\alpha^{\alpha-1}q^{\alpha-1}} (\alpha-1)^{\alpha-1}.
\end{array}
\end{equation}
Solving the two simultaneous equations (\ref{eq1}) and (\ref{eq2}) gives:
\begin{equation} \label{eq3}
\begin{array}{rll}
w & = & \displaystyle\frac{m_1^{\alpha/(\alpha-1)}}{\left(m_1 - \frac{(m_{\alpha} - m_1^{\alpha})}{\alpha^{\alpha-1}q^{\alpha-1}} (\alpha-1)^{\alpha-1}\right)^{1/(\alpha-1)}},
\end{array}
\end{equation}
\begin{equation} \label{eq3a}
\begin{array}{rll}
 p& = &  \displaystyle\frac{\left(m_1 - \frac{(m_{\alpha} - m_1^{\alpha})}{\alpha^{\alpha-1}q^{\alpha-1}} (\alpha-1)^{\alpha-1}\right)^{\alpha/(\alpha-1)}}{m_1^{\alpha/(\alpha-1)}}.
\end{array}
\end{equation}

\noindent \textcolor{black}{\textit{Step 2:}}
\textcolor{black}{We note that for large values of $q$, the demand realization $w$ in (\ref{eq3}) is smaller than $q$ as it is given by a strictly decreasing function of $q$. This condition is satisfied when:
\begin{equation} \label{l1}
 \begin{array}{rlll}
 q& > & \underline{q}(m_1,m_{\alpha},\alpha) := \displaystyle \left( \left(\displaystyle\frac{m_{\alpha} - m_1^{\alpha}}{m_1}\right) \left(\displaystyle\frac{\alpha-1}{\alpha}\right)^{\alpha-1} + m_1^{\alpha-1}\right)^{1/(\alpha-1)}.
 \end{array}
\end{equation}
Under this condition, since we have only one support point that is strictly above $q$, the expected value of the objective function is given as:
\begin{equation*}
\begin{array}{rlll}
\mathbb{E}[\tilde{d} - q]_+ &= &\displaystyle \left(\frac{q\alpha}{\alpha-1}-q\right) \frac{m_{\alpha} - m_1^{\alpha}}{\alpha^{\alpha}q^{\alpha}}{(\alpha-1)}^{\alpha},\\
& =& \displaystyle\frac{(m_{\alpha} - m_1^{\alpha})(\alpha-1)^{\alpha-1}}
{\alpha^{\alpha} q^{\alpha-1}},
\end{array}
\end{equation*}
which corresponds to the lower bound on the expected value. To complete the proof, we need to ensure that (\ref{3point}) corresponds to a valid probability measure for the chosen $w$ and $p$ for all $q > \underline{q}(m_1,m_{\alpha},\alpha)$ by checking the following four conditions:
\texitem{(a)} To verify that $p  > 0$, we observe that the value of $p$ in (\ref{eq3a}) is strictly positive when:
\begin{equation} \label{l2}
\begin{array}{rll}
 q & > &\underline{q}_1 := \left(\displaystyle\frac{m_{\alpha} - m_1^{\alpha}}{m_1}\right)^{1/(\alpha-1)} \left(\displaystyle\frac{\alpha-1}{\alpha}\right).
\end{array}
\end{equation}
Condition (\ref{l2}) is implied by (\ref{l1}) since:
\begin{equation*}
\begin{array}{rll}
 \underline{q}(m_1,m_{\alpha},\alpha) & = & \displaystyle \left(\underline{q}_1^{\alpha-1} + m_1^{\alpha-1} \right)^{1/(\alpha-1)},\\
 & > &\underline{q}_1,
\end{array}
\end{equation*}
where $m_1 > 0$.
\texitem{(b)} It is easy to verify that $p  <  1$ for $q > \underline{q}(m_1,m_{\alpha},\alpha)$, since $m_{\alpha} > m_1^{\alpha}$. 
\texitem{(c)} We next verify that the probability of the atom $q\alpha/(\alpha-1)$ in (\ref{3point}) is strictly less than $1$. Observe that this condition is satisfied when:
\begin{equation} \label{l3}
\begin{array}{rll}
 q & > &\underline{q}_2 :=\left(\displaystyle{m_{\alpha} - m_1^{\alpha}}\right)^{1/\alpha} \left(\displaystyle\frac{\alpha-1}{\alpha}\right),
\end{array}
\end{equation}
where the non-negativity of the probability of this atom holds trivially. Define $z = (m_{\alpha}-m_1^{\alpha})^{1/\alpha}(\alpha-1)/\alpha$. Condition (\ref{l3}) is then implied by (\ref{l1}) since:
\begin{equation*}
\begin{array}{rll}
\displaystyle \frac{ \underline{q}(m_1,m_{\alpha},\alpha)}{\underline{q}_2} & = & \displaystyle \left(\frac{z}{m_1}\left(\frac{\alpha}{\alpha-1}\right)+\left(\frac{m_1}{z}\right)^{\alpha-1}\right)^{1/{(\alpha-1)}}, \\
& > &  \displaystyle \left(\frac{z}{m_1}+\left(\frac{m_1}{z}\right)^{\alpha-1}\right)^{1/{(\alpha-1)}}, \\
& > & 1,
\end{array}
\end{equation*}
where the first inequality holds since $\alpha > \alpha-1$ and the second inequality holds for all $m_1 > 0$ and $z > 0$, since either $z/m_1$ or $m_1/z$ is greater than or equal to 1 and $\alpha > 1$. This implies that  $\underline{q}(m_1,m_{\alpha},\alpha) > \underline{q}_2$.
\texitem{(d)} The final condition that we need to check for the validity of the probability distribution is to verify that the probability of the atom $0$ given by $1 - p - {(m_{\alpha} - m_1^{\alpha})}{(\alpha-1)}^{\alpha}/(\alpha^{\alpha}q^{\alpha})$ is strictly positive. Plugging in the value of $p$, this is equivalent to verifying that for the range of $q$, the following inequality holds:
\begin{equation*}
\begin{array}{lll}
  \left(1 - \left(m_{\alpha} - m_1^{\alpha}\right) \left(\displaystyle\frac{\alpha-1}{{\alpha}q}\right)^{\alpha}\right)^{\alpha-1} & > & \left(1 - \left(\displaystyle\frac{m_{\alpha} - m_1^{\alpha}}{m_1}\right) \left(\displaystyle\frac{\alpha-1}{{\alpha}q}\right)^{\alpha-1}\right)^{\alpha}?
\end{array}
\end{equation*}
or equivalently:
\begin{equation*}
\begin{array}{lll}
 1 - \left(m_{\alpha} - m_1^{\alpha}\right) \left(\displaystyle\frac{\alpha-1}{{\alpha}q}\right)^{\alpha}
  & > & \left(1 - \left(\displaystyle\frac{m_{\alpha} - m_1^{\alpha}}{m_1}\right) \left(\displaystyle\frac{\alpha-1}{{\alpha}q}\right)^{\alpha-1}\right)^{\alpha/(\alpha-1)}?
\end{array}
\end{equation*}
This condition is implied by (\ref{l1}) since:
\begin{equation*}
\begin{array}{rll}
\displaystyle 1 - \left(m_{\alpha} - m_1^{\alpha}\right) \left(\displaystyle\frac{\alpha-1}{{\alpha}q}\right)^{\alpha}  & = & \displaystyle 1 - \left(\displaystyle\frac{m_{\alpha} - m_1^{\alpha}}{m_1}\right) \left(\displaystyle\frac{\alpha-1}{{\alpha}q}\right)^{\alpha-1}\left(\frac{m_1(\alpha-1)}{\alpha q}\right), \\
& > & \displaystyle 1 - \left(\displaystyle\frac{m_{\alpha} - m_1^{\alpha}}{m_1}\right) \left(\displaystyle\frac{\alpha-1}{{\alpha}q}\right)^{\alpha-1}, \\
& > &  \left(1 - \left(\displaystyle\frac{m_{\alpha} - m_1^{\alpha}}{m_1}\right) \left(\displaystyle\frac{\alpha-1}{{\alpha}q}\right)^{\alpha-1}\right)^{\alpha/(\alpha-1)},
\end{array}
\end{equation*}
where the first inequality holds under the condition that $q > m_1(\alpha-1)/\alpha$, which is implied by (\ref{l1}) as $\underline{q}(m_1,m_{\alpha},\alpha) > m_1 > m_1(\alpha-1)/\alpha$ and the second inequality holds since the term in the brackets is strictly less than $1$ for $q > \underline{q}(m_1,m_{\alpha},\alpha)$ and the exponent is greater than $1$. \\
This implies that the distribution is feasible in $\mathcal{F}_{1,\alpha}$ for large values of $q$.  This leads to the desired result.
}
\end{proof}

\subsection{\textcolor{black}{Upper Bound for Large Values of $q$}}
To develop the upper bound on the worst-case expected value, we consider the dual formulation for the moment problem. We will show through an appropriate construction of a dual feasible solution in conjunction with the primal feasible distribution, that this bound is approximately optimal for large values of $q$.
\begin{prop}\label{prop2}
\textcolor{black}{Consider the ambiguity set $\mathcal{F}_{1,\alpha}$.
\texitem{(a)} When $\alpha \in (2,\infty)$, the following upper bound is valid:
\begin{equation}\label{eq:prop2}
\begin{array}{rlllll}
\displaystyle\sup_{{F} \in \mathcal{F}_{1,\alpha}}\  \mathbb{E}_{{F}}[\tilde{d}-q]_+ & \leq  &\displaystyle\frac{(m_{\alpha} - m_1^{\alpha})}{\alpha^{\alpha} q^{\alpha-1} - \alpha^2m_1^{\alpha-1}(\alpha-1)^{\alpha-1}} (\alpha-1)^{\alpha-1}, & \forall q > \overline{q}(m_1,\alpha),
 \end{array}
 \end{equation}
for all values of $q$ greater than $\overline{q}(m_1,\alpha)$ where:
\begin{equation}\label{eq:root1}
\begin{array}{rlllll}
\displaystyle  \overline{q}(m_1,\alpha)& = & \displaystyle m_1(\alpha-1)\alpha^{(2-\alpha)/(\alpha-1)}.
  \end{array}
  \end{equation}
 \texitem{(b)} When $\alpha \in (1,2)$, for all $\epsilon \in (0,(\alpha/(\alpha-1))^{\alpha-1}-\alpha)$, the following upper bound is valid:
 \begin{equation}\label{eq:prop2}
\begin{array}{rlllll}
\displaystyle\sup_{{F} \in \mathcal{F}_{1,\alpha}}\  \mathbb{E}_{{F}}[\tilde{d}-q]_+ & \leq  &\displaystyle\frac{(m_{\alpha} - m_1^{\alpha})}{\alpha^{\alpha} q^{\alpha-1} - (\alpha+\epsilon)\alpha m_1^{\alpha-1}(\alpha-1)^{\alpha-1}} (\alpha-1)^{\alpha-1}, & \forall q > \overline{q}(m_1,\alpha,\epsilon).
 \end{array}
 \end{equation}
 for all values of $q$ greater than $\overline{q}(m_1,\alpha,\epsilon) = m_1(\alpha-1)x^{*}/\alpha$ where $x^{*}$ is defined as the unique root in the interval $((\alpha+\epsilon)^{1/(\alpha-1)},\infty)$ to the equation:
\begin{equation}\label{eq:root2}
\begin{array}{rlllll}
\displaystyle   x^{\alpha} - \left(\alpha+\epsilon\right)x +1 - \left(x^{\alpha-1} -\alpha-\epsilon+1\right)^{\alpha/(\alpha-1)} & = & 0.
  \end{array}
  \end{equation}}
\end{prop}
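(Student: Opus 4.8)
The plan is to prove both upper bounds by weak duality: any solution $(y_0,y_1,y_\alpha)$ feasible for the dual moment problem (\ref{dualf}) yields $\sup_{F\in\mathcal{F}_{1,\alpha}}\mathbb{E}_F[\tilde d-q]_+\le y_0+y_1 m_1+y_\alpha m_\alpha$, so it suffices to exhibit one feasible dual solution whose objective equals the claimed right-hand side. Guided by the lower bound in Proposition \ref{prop1} --- whose three-point primal has its middle atom tending to $m_1$ as $q\to\infty$ --- I would take the majorant $\phi(w):=y_0+y_1w+y_\alpha w^\alpha$ to be tangent to the constraint $\phi\ge 0$ at $w=m_1$. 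Concretely I set $y_1=-\alpha y_\alpha m_1^{\alpha-1}$ and $y_0=(\alpha-1)y_\alpha m_1^\alpha$, which force $\phi(m_1)=\phi'(m_1)=0$, leaving $y_\alpha>0$ as the single free parameter. The decisive simplification is that with this choice $y_0+y_1m_1=-y_\alpha m_1^\alpha$, so the objective collapses to $y_0+y_1m_1+y_\alpha m_\alpha=y_\alpha(m_\alpha-m_1^\alpha)$ irrespective of $y_\alpha$. Matching this to the stated bounds then simply fixes $y_\alpha=(\alpha-1)^{\alpha-1}/D$, where $D=\alpha^\alpha q^{\alpha-1}-\alpha^2 m_1^{\alpha-1}(\alpha-1)^{\alpha-1}$ in case (a) and $D=\alpha^\alpha q^{\alpha-1}-(\alpha+\epsilon)\alpha m_1^{\alpha-1}(\alpha-1)^{\alpha-1}$ in case (b).

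With $y_\alpha$ pinned down, two feasibility checks remain. The first dual constraint $\phi(w)\ge 0$ for all $w\ge 0$ is immediate: $\phi$ is strictly convex on $(0,\infty)$ since $\phi''(w)=\alpha(\alpha-1)y_\alpha w^{\alpha-2}>0$, and by construction its unique minimizer is $w=m_1$ with value $0$; this needs only $y_\alpha>0$, i.e. $D>0$. A short computation shows $D>0$ is exactly $q>\overline q(m_1,\alpha)=m_1(\alpha-1)\alpha^{(2-\alpha)/(\alpha-1)}$ of (\ref{eq:root1}), which is why this threshold governs case (a). The real work is the second constraint $g(w):=\phi(w)-(w-q)\ge 0$ for all $w\ge 0$. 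Again $g$ is convex, so it suffices to check its minimum, attained at $w_*$ with $w_*^{\alpha-1}=(1-y_1)/(\alpha y_\alpha)=\left(\tfrac{\alpha q}{\alpha-1}\right)^{\alpha-1}-(\alpha+\epsilon-1)m_1^{\alpha-1}$ (with $\epsilon=0$ in case (a)). Using $g'(w_*)=0$ to eliminate $w_*^\alpha$, the inequality $g(w_*)\ge 0$ reduces to $q\ge(\alpha-1)y_\alpha\,(w_*^\alpha-m_1^\alpha)$.

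I expect this last reduction to be the crux, and it is where the two regimes split. Introducing the dimensionless variable $x=\alpha q/((\alpha-1)m_1)$, so that $q=(\alpha-1)m_1x/\alpha$ and $w_*^{\alpha-1}=m_1^{\alpha-1}\big(x^{\alpha-1}-(\alpha+\epsilon-1)\big)$, the condition $g(w_*)\ge 0$ becomes, after clearing the factor $x^{\alpha-1}-(\alpha+\epsilon)$ (positive precisely for $x>(\alpha+\epsilon)^{1/(\alpha-1)}$), exactly
\begin{equation*}
x^\alpha-(\alpha+\epsilon)x+1-\left(x^{\alpha-1}-\alpha-\epsilon+1\right)^{\alpha/(\alpha-1)}\ \ge\ 0,
\end{equation*}
whose left-hand side is the function in the root equation (\ref{eq:root2}). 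Thus the whole problem is to control the sign of $h(x):=x^\alpha-(\alpha+\epsilon)x+1-(x^{\alpha-1}-\alpha-\epsilon+1)^{\alpha/(\alpha-1)}$ on $((\alpha+\epsilon)^{1/(\alpha-1)},\infty)$.

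For case (a) one takes $\epsilon=0$ and checks that at the endpoint $x_b=\alpha^{1/(\alpha-1)}$ (the value where $D=0$) one has $h(x_b)=0$ and, because $\alpha>2$, $h'(x_b)=\alpha\big(\alpha-1-\alpha^{(\alpha-2)/(\alpha-1)}\big)>0$; I would then show $h\ge 0$ on the entire range, so the bound holds for every $q>\overline q(m_1,\alpha)$. For case (b) the same derivative satisfies $h'(x_b)<0$ when $1<\alpha<2$, so $h$ dips below zero just past $x_b$ and the $\epsilon=0$ bound fails; the remedy is to enlarge the subtracted constant to $\alpha+\epsilon$, prove that $h$ then possesses a \emph{unique} root $x^*$ in $((\alpha+\epsilon)^{1/(\alpha-1)},\infty)$ with $h\ge 0$ beyond it, and translate $x>x^*$ back to $q>\overline q(m_1,\alpha,\epsilon)=m_1(\alpha-1)x^*/\alpha$. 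Establishing nonnegativity of $h$ past the relevant root, together with the uniqueness of that root in case (b), is the main analytic obstacle; everything else is the convexity bookkeeping above and the one-line objective evaluation $y_\alpha(m_\alpha-m_1^\alpha)$.
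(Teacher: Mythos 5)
Your construction is exactly the paper's: the dual point you describe (tangency of $y_0+y_1w+y_\alpha w^\alpha$ to zero at $w=m_1$, with $y_\alpha=(\alpha-1)^{\alpha-1}/D$) coincides term-by-term with the solution in \eqref{dual} with $K=(\alpha+\epsilon)(m_1(\alpha-1)/\alpha)^{\alpha-1}$, the objective collapse to $y_\alpha(m_\alpha-m_1^\alpha)$ is the same computation, and your dimensionless function $h(x)$ is precisely the paper's $\triangle(q)$ after the substitution $x=q/C$. The reduction of the second dual constraint to $h(x)\ge 0$ is carried out correctly, and your identification of the thresholds ($D>0$ giving $\overline q(m_1,\alpha)$ in case (a), the root equation \eqref{eq:root2} in case (b)) matches the statement.

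The gap is that the proposal stops exactly where the proof becomes nontrivial: you assert, but do not prove, that $h\ge 0$ on $(\alpha^{1/(\alpha-1)},\infty)$ in case (a), and that $h$ has a \emph{unique} root $x^*$ in $((\alpha+\epsilon)^{1/(\alpha-1)},\infty)$ with $h\ge 0$ beyond it in case (b). Checking $h(x_b)=0$ and the sign of $h'(x_b)$ at the left endpoint does not control the sign of $h$ on the whole half-line, and these global statements are where all the work lies. The paper closes case (a) by showing $h'(x)>0$ for \emph{all} $x>\alpha^{1/(\alpha-1)}$, which after rearrangement is an application of Bernoulli's inequality $(1-t)^{\alpha-1}>1-(\alpha-1)t$ (valid since $\alpha>2$); and case (b) by proving $h''>0$ on the relevant range via the strict weighted AM--GM inequality with weight $\lambda=\alpha-1\in(0,1)$, so that strict convexity together with $h((\alpha+\epsilon)^{1/(\alpha-1)})=0$, $h'((\alpha+\epsilon)^{1/(\alpha-1)})<0$ (this is where the restriction $\epsilon<(\alpha/(\alpha-1))^{\alpha-1}-\alpha$ enters, which your proposal does not account for) and $h(x)\to\infty$ forces exactly one further root and positivity past it. Without these convexity/monotonicity arguments the claimed inequalities are unverified, so the proof as written is incomplete even though the architecture is right.
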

\begin{proof}
 We derive the upper bound by constructing a dual feasible solution to (\ref{dualf}) as follows. Define $y_0,~y_1$ and $y_{\alpha}$ as:
\begin{equation}\label{dual}
\begin{array}{rcl}
y_0 & = & \displaystyle\frac{(\alpha-1)m_1^{\alpha} (\alpha-1)^{\alpha-1}}{\alpha^{\alpha} (q^{\alpha-1} - K)},\\
y_1 & = & \displaystyle\frac{-\alpha m_1^{\alpha-1} (\alpha-1)^{\alpha-1}}{\alpha^{\alpha} (q^{\alpha-1} - K)},\\
y_{\alpha} & = & \displaystyle\frac{(\alpha-1)^{\alpha-1}}{\alpha^{\alpha} (q^{\alpha-1} - K)},
\end{array}
\end{equation}
where we choose a strictly positive $K$ in a manner to be specified later in the proof.
We first verify that this forms a dual feasible solution for $q$ satisfying $q > K^{1/(\alpha-1)}$, by checking each of the dual constraints. Observe that the dual feasibility constraints are equivalent to the following conditions:
\begin{equation}
\displaystyle\min_{w \geq 0} \left(y_0 + y_1 w + y_{\alpha} w^{\alpha}\right) \geq 0 \mbox{ and } \displaystyle\min_{w \geq 0} \left(y_0 + q + (y_1 - 1) w + y_{\alpha} w^{\alpha} \right) \geq 0\end{equation}
Since the values of the dual variables in (\ref{dual}) satisfy $y_{\alpha} > 0$ and $y_1 < 0$ for $q > K^{1/(\alpha-1)}$, the minimum value in the first dual constraint is obtained at $w^* = (\displaystyle{-y_1}/{(\alpha y_{\alpha})})^{1/(\alpha-1)}$. Substituting this value, the first dual feasibility constraint is equivalent to verifying the condition:
\begin{equation*}
\begin{array}{rlll}
y_0 & \geq & -y_1  \left(\displaystyle\frac{-y_1}{\alpha y_{\alpha}}\right)^{1/(\alpha-1)} - y_{\alpha}  \left(\displaystyle\frac{-y_1}{\alpha y_{\alpha}}\right)^{\alpha/(\alpha-1)},\\
& = & \displaystyle\frac{(-y_1)^{\alpha/(\alpha-1)}}{(\alpha y_{\alpha})^{1/(\alpha-1)}} \left(\displaystyle\frac{\alpha-1}{\alpha}\right).
\end{array}
\end{equation*}
The choice of dual variables in (\ref{dual}) satisfy this condition at equality since:
\begin{equation*}
\begin{array}{rlll}
\displaystyle y_0 - \frac{(-y_1)^{\alpha/(\alpha-1)}}{(\alpha y_{\alpha})^{1/(\alpha-1)}} \left(\displaystyle\frac{\alpha-1}{\alpha}\right)& = &  \displaystyle\frac{(\alpha-1)m_1^{\alpha} (\alpha-1)^{\alpha-1}}{\alpha^{\alpha} (q^{\alpha-1} - K)}  - \left(\frac{\alpha-1}{\alpha}\right)\frac{({\alpha}m_1^{\alpha-1}(\alpha-1)^{\alpha-1})^{\alpha/(\alpha-1)}}{(\alpha(\alpha-1)^{\alpha-1})^{1/(\alpha-1)}\alpha^{\alpha} (q^{\alpha-1} - K)},\\
& = & 0.
\end{array}
\end{equation*}
Furthermore as $y_{\alpha} > 0$, the minimum value in the second dual constraint is obtained at:
 $$w^* = (\displaystyle{(1-y_1)}/{(\alpha y_{\alpha})}
)^{1/(\alpha-1)}.$$  Substituting this in, the second dual feasibility constraint is equivalent to verifying if the following condition holds:
\begin{equation}
\begin{array}{rlll}
\delta(q) & := & y_0 + q - \displaystyle\frac{(1-y_1)^{\alpha/(\alpha-1)}}{(\alpha y_{\alpha})^{1/(\alpha-1)}} \left(\displaystyle\frac{\alpha-1}{\alpha}\right) \geq 0?
\end{array}
\end{equation}
The choice of dual variables in (\ref{dual}) leads to the following expression:
\begin{equation*}
\begin{array}{rllllll}
\displaystyle \delta(q) & = & \displaystyle \frac{(\alpha-1)m_1^{\alpha} (\alpha-1)^{\alpha-1}}{\alpha^{\alpha} (q^{\alpha-1} - K)} + q - \frac{\left(\alpha^{\alpha} (q^{\alpha-1} - K) + n m_1^{\alpha-1} (\alpha-1)^{\alpha-1}\right)^{\alpha/(\alpha-1)}}{\left(\alpha(\alpha-1)\right)^{1/(\alpha-1)}\alpha^{\alpha} (q^{\alpha-1} - K)}\left(\displaystyle\frac{\alpha-1}{\alpha}\right),\\
& = & \displaystyle \frac{m_1^{\alpha} {(\alpha-1)}^{\alpha} + \alpha^{\alpha} q (q^{\alpha-1} - K) - \left(\alpha^{\alpha-1} (q^{\alpha-1} - K) +  m_1^{\alpha-1} (\alpha-1)^{\alpha-1} \right)^{\alpha/(\alpha-1)}}{\alpha^{\alpha} (q^{\alpha-1} - K)}.
\end{array}
\end{equation*}
\textcolor{black}{We verify that for $q > K^{1/(\alpha-1)}$ (to ensure positivity of the denominator) and large enough, the following inequality holds:
\begin{equation*}
\begin{array}{rcl}
 m_1^{\alpha} {(\alpha-1)}^{\alpha} + \alpha^{\alpha} q (q^{\alpha-1} - K) - \left(\alpha^{\alpha-1} (q^{\alpha-1} - K) +  m_1^{\alpha-1} (\alpha-1)^{\alpha-1} \right)^{\alpha/(\alpha-1)} >  0?,
\end{array}
\end{equation*}
Let $C = m_1(\alpha-1)/{\alpha} > 0$. Dividing by $m_1^\alpha(\alpha-1)^{\alpha}$, this condition is equivalent to verifying that for $q$ large enough, the following inequality holds:
\begin{equation*}
\begin{array}{rcl}
 \triangle(q) & := & \displaystyle \left(\left(\frac{q}{C}\right)^{\alpha} - q \left(\frac{K}{C^{\alpha}}\right) +1 \right) - \left(\left(\frac{q}{C}\right)^{\alpha-1} - \left(\frac{K}{C^{\alpha-1}}\right) +1\right)^{\alpha/(\alpha-1)} >  0?
\end{array}
\end{equation*}
We need to verify that $\triangle(q)$ is strictly positive for large values of $q$. To do so, we consider two cases:
\texitem{(a)} $\alpha \in (2,\infty)$:
 In this case, we set the constant $K$ to ${\alpha}C^{\alpha-1} = \alpha(m_1(\alpha-1)/{\alpha})^{\alpha-1}$. Then, we need to verify that
 for $q$ beyond a certain value $\overline{q}(m_1,m_{\alpha},\alpha)$ (which will be identified next), the following inequality holds:
\begin{equation*}
\begin{array}{rcllll}
\triangle(q) & = &\displaystyle \left(\left(\frac{q}{C}\right)^{\alpha} - \alpha \left(\frac{q}{C}\right) +1 \right) - \left(\left(\frac{q}{C}\right)^{\alpha-1} - \alpha +1\right)^{\alpha/(\alpha-1)} > 0?
\end{array}
\end{equation*}
By setting $\overline{q}(m_1,m_{\alpha},\alpha) = K^{1/(\alpha-1)} = \alpha^{1/(\alpha-1)} C = m_1(\alpha-1)\alpha^{(2-\alpha)/(\alpha-1)}$, we observe that the condition is satisfied at equality, since:
\begin{equation} \label{aaaaa0}
\begin{array}{rllll}
 \triangle(\overline{q}(m_1,m_{\alpha},\alpha)) & = & \displaystyle   \left(\alpha^{\alpha/(\alpha-1)} - \alpha^{\alpha/(\alpha-1)} +  1\right)- \left(\alpha - \alpha +1 \right)^{\alpha/(\alpha-1)}, \\
 & = &\displaystyle  0.
\end{array}
\end{equation}
Furthermore, the derivative of the function $\triangle({q})$ with respect to $q$ satisfies:
\begin{equation} \label{aaaaa}
\begin{array}{rllll}
 \displaystyle \frac{d}{dq}\triangle({q}) & = & \displaystyle \frac{\alpha}{C}  \left(\left(\frac{q}{C}\right)^{\alpha-1}-1-\left(\frac{q}{C}\right)^{\alpha-2}\left(\left(\frac{q}{C}\right)^{\alpha-1}-\alpha+1\right)^{1/(\alpha-1)}\right),\\
 & = & \displaystyle  \frac{\alpha q^{\alpha-1}}{C^{\alpha}}\left(1 - \frac{1}{(q/C)^{\alpha-1}} - \left(1-\frac{\alpha-1}{(q/C)^{\alpha-1}}\right)^{1/(\alpha-1)} \right), \\
 & > & 0, \quad \quad \forall q > \overline{q}(m_1,m_{\alpha},\alpha),
\end{array}
\end{equation}
where the first equality is obtained by differentiating the function $\triangle({q})$, the second equality is obtained by straightforward algebraic manipulations and the inequality is obtained by using Bernoulli's inequality $(1-x)^t > 1-tx$ which is valid for $t > 1$ and $0 < x \leq 1$ and setting $t = \alpha -1$ and $x = (C/q)^{\alpha-1}$. Note that since $\alpha > 2$, $q > \alpha^{1/(\alpha-1)} C \geq C$ and the conditions $t > 1$ and $0 \leq x \leq 1$ are satisfied. This implies that the derivative of the function is positive for all values of $q > \overline{q}(m_1,m_{\alpha},\alpha)$. Combining (\ref{aaaaa0}) and (\ref{aaaaa}) implies that $\triangle({q})$ is positive for all values of $q$ above the threshold:
\begin{equation*}
\begin{array}{rllll}
 \triangle(q) & > & \displaystyle   0, & \displaystyle \forall q > \overline{q}(m_1,m_{\alpha},\alpha).
\end{array}
\end{equation*} Hence the constructed solution is dual feasible for $q$ above the $\overline{q}(m_1,m_{\alpha},\alpha)$. The objective function value of this dual feasible solution reduces to the form below which yields the desired result:
 \begin{equation*}
 \begin{array}{rllll}
\displaystyle y_0 + y_1 m_1 + y_{\alpha} m_{\alpha} & = & \displaystyle\frac{(m_{\alpha} - m_1^{\alpha}) (\alpha-1)^{\alpha-1}}{\alpha^{\alpha} q^{\alpha-1} - \alpha^2m_1^{\alpha-1}(\alpha-1)^{\alpha-1}}.
\end{array}
\end{equation*}
\texitem{(b)} $\alpha \in (1,2)$: Note that unlike the $\alpha > 2$ case, setting $K = {\alpha}C^{\alpha-1}$ does not ensure a dual feasible solution for $\alpha \in (1,2)$. To see this, observe that by applying the generalized binomial expansion, the term $\triangle(q)$ reduces to:
\begin{equation*}
\begin{array}{rllll}
\triangle(q)& = & \displaystyle \left(\left(\frac{q}{C}\right)^{\alpha} - \alpha \left(\frac{q}{C}\right) +1 \right) - \sum_{k=0}^{\infty} {{\frac{\alpha}{\alpha-1}}\choose{k}}\left(\frac{q}{C}\right)^{\alpha-(\alpha-1)k}(1-\alpha)^k,
 \end{array}
 \end{equation*}
 where ${r}\choose{k}$ is defined as $r(r-1)\ldots(r-k+1)/k!$ for general values of $r$ (not necessarily integer). Expanding the first few terms, gives:
 \begin{equation*}
\begin{array}{rllll}
\triangle(q) & = &
  \displaystyle \displaystyle \left(\frac{q}{C}\right)^{\alpha} - \alpha \left(\frac{q}{C}\right) +1 - \left(\frac{q}{C}\right)^{\alpha}+ \alpha \left(\frac{q}{C}\right)-\frac{\alpha}{2} \left(\frac{q}{C}\right)^{2-\alpha}-\sum_{k=3}^{\infty} {{\frac{\alpha}{\alpha-1}}\choose{k}}\left(\frac{q}{C}\right)^{\alpha-(\alpha-1)k}(1-\alpha)^k,\\
  & =& \displaystyle -\frac{\alpha}{2} \left(\frac{q}{C}\right)^{2-\alpha} +1 -\sum_{k=3}^{\infty} {{\frac{\alpha}{\alpha-1}}\choose{k}}\left(\frac{q}{C}\right)^{\alpha-(\alpha-1)k}(1-\alpha)^k,\\ 
 \end{array}
 \end{equation*}
where the leading term of the expression with exponent $2-\alpha > 0$ has a negative coefficient. This implies that for large values of $q$, $\triangle(q)$ becomes negative. To deal with this, we modify the dual solution by choosing for a strictly positive small $\epsilon > 0$, the value $K = ({\alpha}+\epsilon)C^{\alpha-1}$. In this case, we need to verify that for $q$ above a certain value (that needs to be identified), the following inequality holds:
\begin{equation*}
\begin{array}{rcllll}
\triangle(q) & = &\displaystyle \left(\left(\frac{q}{C}\right)^{\alpha} - (\alpha+\epsilon) \left(\frac{q}{C}\right) +1 \right) - \left(\left(\frac{q}{C}\right)^{\alpha-1} - \alpha-\epsilon +1\right)^{\alpha/(\alpha-1)} > 0?
\end{array}
\end{equation*}
Note that, by applying the generalized binomial expansion, the term $\triangle(q)$ reduces to:
\begin{equation*}
\begin{array}{rllll}
\triangle(q)& = & \displaystyle \left(\frac{q}{C}\right)^{\alpha} - (\alpha+\epsilon) \left(\frac{q}{C}\right) +1  - \sum_{k=0}^{\infty} {{\frac{\alpha}{\alpha-1}}\choose{k}}\left(\frac{q}{C}\right)^{\alpha-(\alpha-1)k}(1-\alpha-\epsilon)^k, \\
 &= & \displaystyle \left(\frac{q}{C}\right)^{\alpha} - (\alpha+\epsilon) \left(\frac{q}{C}\right) +1 -\left(\frac{q}{C}\right)^{\alpha} +\alpha\left(1+\frac{\epsilon}{\alpha-1}\right)\left(\frac{q}{C}\right) \\
 & & \displaystyle - \sum_{k=2}^{\infty} {{\frac{\alpha}{\alpha-1}}\choose{k}}\left(\frac{q}{C}\right)^{\alpha-(\alpha-1)k}(1-\alpha-\epsilon)^k,\\
 & = & \displaystyle \left(\frac{\epsilon}{\alpha-1}\right)\left(\frac{q}{C}\right) + 1 - \sum_{k=2}^{\infty} {{\frac{\alpha}{\alpha-1}}\choose{k}}\left(\frac{q}{C}\right)^{\alpha-(\alpha-1)k}(1-\alpha-\epsilon)^k,\\
 \end{array}
 \end{equation*}
 where the leading term of the expression with exponent $1$ has a positive coefficient.
The derivative of the function $\triangle({q})$ with respect to $q$ is given by:
\begin{equation} \label{aaaaab}
\begin{array}{rllll}
 \displaystyle \frac{d}{dq}\triangle({q}) & = & \displaystyle \frac{\alpha}{C}  \left(\left(\frac{q}{C}\right)^{\alpha-1}-1-\frac{\epsilon}{\alpha}-\left(\frac{q}{C}\right)^{\alpha-2}\left(\left(\frac{q}{C}\right)^{\alpha-1}-\alpha-\epsilon+1\right)^{1/(\alpha-1)}\right).
\end{array}
\end{equation}
Furthermore, the second derivative of the function $\triangle({q})$ with respect to $q$ is given by:
\begin{equation} \label{aaaaab1}
\begin{array}{rllll}
 \displaystyle \frac{d^2}{dq^2}\triangle({q}) & = & \displaystyle \frac{\alpha q^{\alpha-3}}{C^{\alpha-1}} h(q),\\
\end{array}
\end{equation}
where $h(q)$ is defined as:
\begin{equation} \label{aaaaab2}
\begin{array}{rllll}
 \displaystyle h({q}) & := & \displaystyle  (\alpha-1)\left(\frac{q}{C}\right) + (2-\alpha)\left(\left(\frac{q}{C}\right)^{\alpha-1}-\alpha-\epsilon+1\right)^{1/(\alpha-1)} \\
 & & \displaystyle -\left(\frac{q}{C}\right)^{\alpha-1}\left(\left(\frac{q}{C}\right)^{\alpha-1}-\alpha-\epsilon+1\right)^{(2-\alpha)/(\alpha-1)}, \\
 & > &  0, \quad \quad \quad \forall q > (\alpha+\epsilon-1)^{1/(\alpha-1)}C,
\end{array}
\end{equation}
with the nonnegativity of the second derivative following from using the strict form of the weighted arithmetic and geometric mean inequality given by $\lambda x_1 + (1-\lambda) x_2 > x_1^{\lambda}x_2^{1-\lambda}$ which is valid for $\lambda \in (0,1)$ and $x_1, x_2> 0$, $x_1 \neq x_2$ by setting $\lambda = \alpha -1 \in (0,1)$ with $\alpha \in (1,2)$, $x_1 =  q/C$ and $x_2 = ((q/C)^{\alpha-1}-\alpha-\epsilon+1)^{1/(\alpha-1)}$. To finish the proof, observe $\underline{q} = (\alpha+\epsilon)^{1/(\alpha-1)}C$ is a root of the equation $ \triangle(q) = 0$, since:
\begin{equation} \label{aaaaa011}
\begin{array}{rllll}
 \triangle(\underline{q} ) & = & \displaystyle  \left(\left(\alpha+\epsilon\right)^{\alpha/(\alpha-1)} - (\alpha+\epsilon)^{\alpha/(\alpha-1)}  +1 \right) - \left(\alpha+\epsilon - \alpha-\epsilon +1\right)^{\alpha/(\alpha-1)} , \\
 & = &\displaystyle  0.
\end{array}
\end{equation}
Furthermore, the derivative:
\begin{equation} \label{aaaaa0111}
\begin{array}{rllll}
\displaystyle \frac{d}{dq} \triangle(q)|_{q = \underline{q}} & = & \displaystyle  \frac{\alpha}{C}  \left(\alpha+\epsilon-1-\frac{\epsilon}{\alpha}-\left(\alpha+\epsilon\right)^{(\alpha-2)/(\alpha-1)}\left(\alpha+\epsilon-\alpha-\epsilon+1\right)^{1/(\alpha-1)}\right), \\
& = & \displaystyle \frac{\alpha+\epsilon}{C}  \left(\alpha-1-\alpha\left(\alpha+\epsilon\right)^{-1/(\alpha-1)}\right), \\
 & < &\displaystyle  0,
\end{array}
\end{equation}
where the first equation is obtained by plugging in $\underline{q}$ into (\ref{aaaaab}), the second equation is obtained from straightforward algebraic manipulations and the inequality is obtained by observing that for $0 < \epsilon < (\alpha/(\alpha-1))^{\alpha-1}-\alpha$, the right hand side is negative. Since the function is strictly convex from (\ref{aaaaab2}) with $\lim_{q \rightarrow \infty}\triangle(q) = \infty$ and one of the roots is given by $\underline{q}$ where the derivative is negative, the function is positive for all values of $q$ above the second root $\overline{q}$ to the equation:
\begin{equation*}
\begin{array}{rlllll}
\displaystyle  \triangle(\overline{q})& = & 0,
  \end{array}
  \end{equation*}
  which lies in the range $(\underline{q},\infty)$. The objective function value of this dual feasible solution reduces to the form below which yields the desired result for $\alpha \in (1,2)$:
 \begin{equation*}
y_0 + y_1 m_1 + y_{\alpha} m_{\alpha} = \displaystyle\frac{(m_{\alpha} - m_1^{\alpha}) (\alpha-1)^{\alpha-1}}{\alpha^{\alpha} q^{\alpha-1} - (\alpha+\epsilon)\alpha m_1^{\alpha-1}(\alpha-1)^{\alpha-1}}.
\end{equation*}
}
\end{proof}

\subsection{Numerical Example}\label{subsec:3.3}
We provide a numerical illustration of the quality of the bounds from Propositions \ref{prop1} and \ref{prop2} respectively. To compute the worst-case expected value, we solve the dual formulation in (\ref{dualf}) using a semidefinite program (SDP) for rational values of $\alpha$. Assume that $\alpha = {p}/{q}$, where $p$ and $q$ are strictly positive integers. Then, the dual formulation is given as:
\begin{equation} \label{dualf1}
\begin{array}{rlll}
\inf & y_0 + y_1 m_1 + y_{p/q} m_{p/q} &\\
\textrm{s.t.} & y_0 + y_1 w + y_{p/q} w^{p/q} \geq 0,  & \forall w \geq 0,\\
& y_0+q + (y_1-1) w + y_{p/q} w^{p/q} \geq 0, & \forall w \geq 0.
\end{array}
\end{equation}
Applying the transformation by defining the variable $d = w^{1/q}$ or equivalently $d = e^q$, we obtain a reformulation of the dual problem as:
\begin{equation} \label{dualf2}
\begin{array}{rlll}
\inf & y_0 + y_1 m_1 + y_{p/q} m_{p/q} &\\
\textrm{s.t.} & y_0 + y_1 d^q + y_{p/q} d^{p} \geq 0,  & \forall d \geq 0,\\
& y_0+q + (y_1-1) d^q + y_{p/q} d^{p} \geq 0, & \forall d \geq 0,
\end{array}
\end{equation}
The constraints in (\ref{dualf2}) are the standard nonnegativity conditions on univariate polynomials over the half-line for which semidefinite representations are available (see Bertsimas and Popescu \cite{bertsimas02}, Lasserre \cite{jean02}, Nesterov \cite{Nesterov}). For example, with $\alpha = 3$ ($p = 3, q = 1$), the semidefinite programming formulation is given as:
\begin{equation} \label{dualf3}
\begin{array}{rlll}
\displaystyle \inf_{y_0,y_1,y_3,a_1,b_1,c_1,a_2,b_2,c_2} & y_0 + y_1 m_1 + y_{3} m_{3} &\\
\textrm{s.t.} & \begin{bmatrix}
    y_0& 0 &a_1& b_1\\
   0 & y_1-2a_1& -b_1 & c_1 \\
   a_1 & -b_1 & -2c_1 & 0 \\
   b_1& c_1 & 0 & y_3
  \end{bmatrix}\succeq 0 ,  & \\
& \begin{bmatrix}
    y_0+q& 0 &a_2 & b_2\\
   0 & y_1-1-2a_2&-b_2 & c_2 \\
   a_2 & -b_2 & -2c_2 & 0 \\
   b_2& c_2 & 0 & y_3
  \end{bmatrix}\succeq 0 , &
\end{array}
\end{equation}
while for $\alpha = 3/2$ ($p = 3, q = 2$), the semidefinite programming formulation is given as:
\begin{equation} \label{dualf4}
\begin{array}{rlll}
\displaystyle \inf_{y_0,y_1,y_{3/2},a_1,b_1,c_1,a_2,b_2,c_2} & y_0 + y_1 m_1 + y_{3/2} m_{3/2} &\\
\textrm{s.t.} & \begin{bmatrix}
    y_0& 0 &a_1 & b_1\\
   0 & -2a_1& -b_1 & c_2 \\
   a_1 & -b_1 & y_1-2c_2 & 0 \\
   b_1& c_2 & 0 & y_{3/2}
  \end{bmatrix}\succeq 0 ,  & \\
& \begin{bmatrix}
    y_0+q& 0 &a_1 & b_1\\
   0 & -2a_1& -b_1 & c_2 \\
   a_1 & -b_1 & y_1-1-2c_2 & 0 \\
   b_1& c_2 & 0 & y_{3/2}
  \end{bmatrix}\succeq 0 , &
\end{array}
\end{equation}
In Figures \ref{m31} and \ref{m32}, we compare the upper and lower bounds and the worst-case expected value obtained from solving the SDP. The semidefinite programs were solved in Matlab R2017a with SDPT3 version 4.0 (see Toh, Todd and Tutuncu \cite{toh1,toh2}). \textcolor{black}{To compare the results, we use a mean demand of $50$ and assume that $m_{3} = 125150$ and $m_{3/2} = \sqrt{125150}$ respectively where Holder's inequality requires $m_{3} \geq m_{3/2}^2 \geq m_1^3$. The value of the worst-case expected values and the bounds for $\alpha = 3$ are smaller than the values for $\alpha = 3/2$ for a given $q$ as should be expected since the former ambiguity set makes stronger assumptions on the existence of moments. The figures also illustrate that the scaling behavior of the bounds as a function of $q$ and provides the range of $q$ from Propositions \ref{prop1} and \ref{prop2} over which the bounds are valid in these instances. We observe that for $\alpha = 3/2$ for the range of $q$ considered in the figure, the upper bounds are closer to the exact value in comparison to lower bound. While this suggests that it might be possible to construct stronger closed form lower bounds, especially when $\alpha < 2$, as we show in the next section, the proposed lower and upper bounds are sufficient to provide a characterization of the worst-case value for large values of $q$ using the theory of regularly varying functions.}
\begin{figure}[htbp]
\centering
\includegraphics[width=15cm] {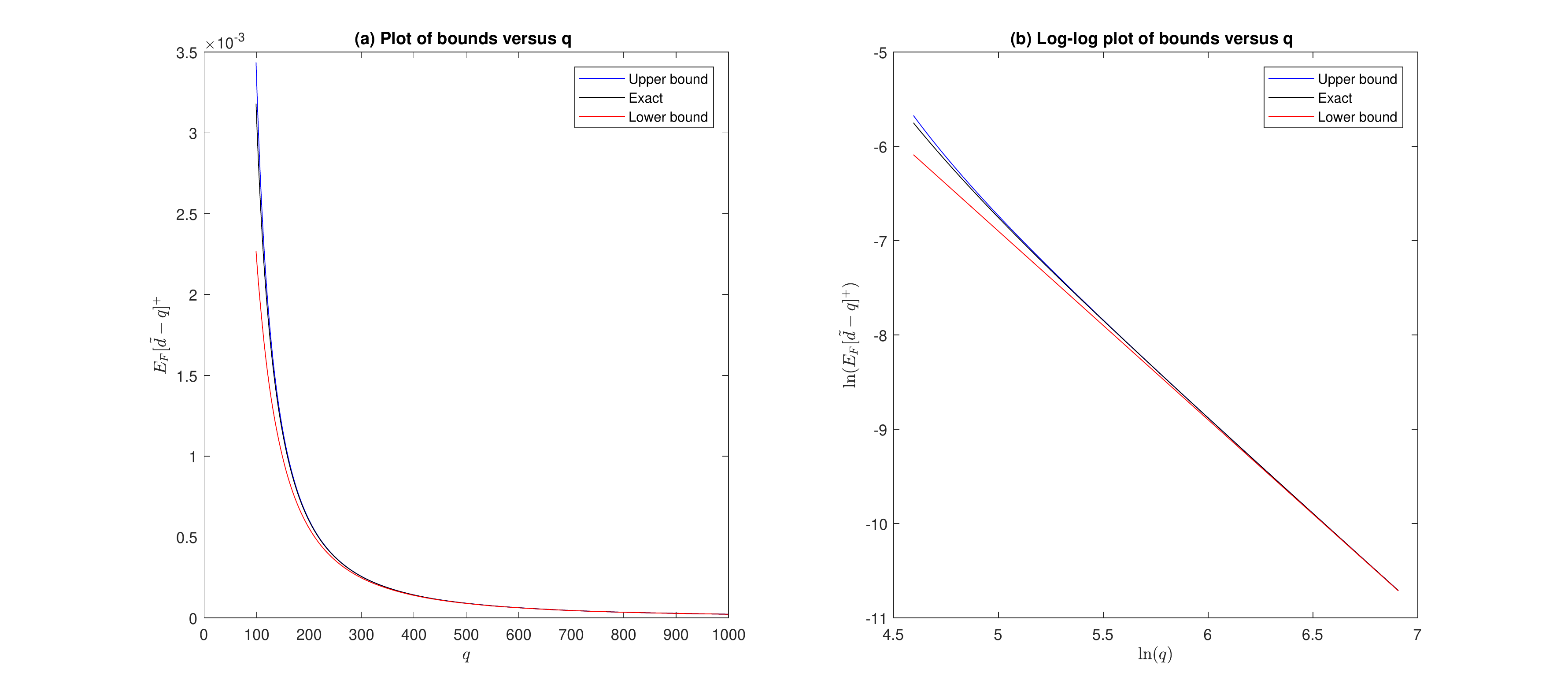}
\caption{Plot (a) compares the upper and lower bounds with the exact bound obtained from solving a SDP as a function of $q$ while plot (b) provides a log-log plot to characterize the scaling behavior. The mean demand is set to $m_1 = 50$ and the third moment is set to $m_3 = 125150$. \textcolor{black}{The lower bound is valid for $q > 50.013$ and the upper bound is valid for $q > 57.735$.}}
\label{m31}
\end{figure}

\begin{figure}[htbp]
\centering
\includegraphics[width=15cm] {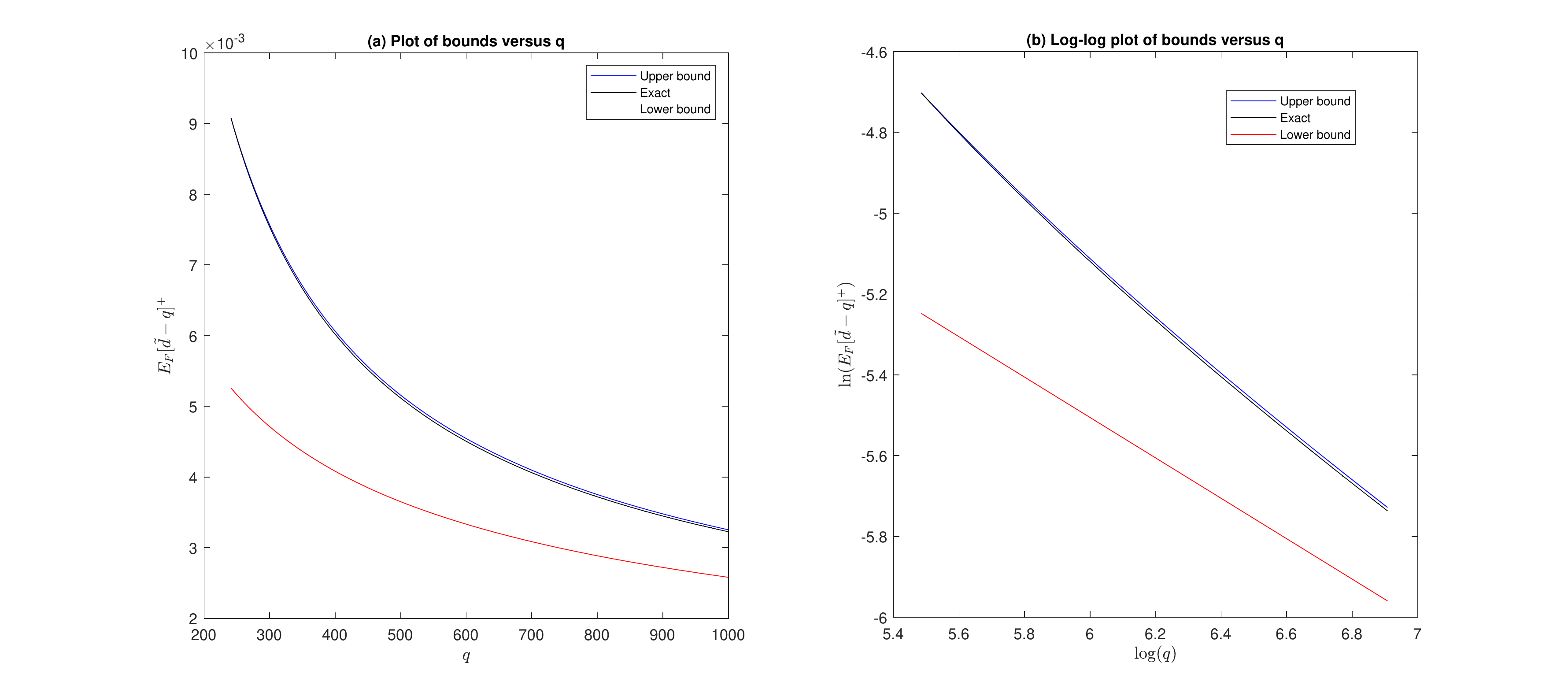}
\caption{\textcolor{black}{Plot (a) compares the upper and lower bounds with the exact bound obtained from solving a SDP as a function of $q$ while plot (b) provides a log-log plot to characterize the scaling behavior. The mean demand is set to $m_1 = 50$ and the highest moment is set to $m_{3/2} = \sqrt{125150}$ with $\alpha = 3/2$. The bounds are plotted by setting $\epsilon = 0.1$ where the lower bound is valid for $q > 50.034$ and the upper bound is valid for $q > 240.67$.}} \label{m32}
\end{figure}
\section{Characterization of Heavy-Tail Optimality}\label{sec4}
In this section, we use the lower and upper bounds to provide a characterization of the tail of the demand distribution $F^*$ for which the distributionally robust newsvendor order quantity remains optimal. To do so, we make use of the notion of regularly varying distributions that is popularly used to characterize heavy-tailed distributions (see Bingham, Goldie and Teugels \cite{bingham}, de Haan \cite{dehaan}). The key property of such distributions is that the behaviour at infinity is similar to the behaviour of a power law distribution. As we see in this section, this is exactly the type of behavior that $F^*$ satisfies.

\subsection{Regularly Varying Distributions}
We first review the popular paradigm of distributions with regularly varying tails that has been used to characterize non-negative heavy-tailed distributions. A function $u:\Re_{+}\to\Re_{+}$ is said to be regularly varying at infinity with index $\alpha\in \Re$ if for all $t > 0$, we have:
\begin{equation*}
\begin{array}{rllll}
\displaystyle \lim_{x\to\infty} \frac{u(tx)}{u(x)} = t^{\alpha}.
\end{array}
\end{equation*}
 We express this by $u\in \RV_{\alpha}$. A non-negative random variable $\tilde{d}$ with cumulative distribution function $F$ is regularly varying if $\overline{F}:=1-F \in \RV_{-\alpha}$ for some $\alpha\ge 0$. The distribution function is said to have tail parameter $\alpha$ if $\overline{F} \in \RV_{-\alpha}$.
Two classical examples of regularly varying random variables that are particularly relevant in our context are:
\texitem{(a)} $\mbox{Pareto}(x_m,\alpha)$: This random variable is defined with two parameters - a scale parameter $x_m > 0$ and a shape parameter $\alpha > 0$ with probability density function given as follows:
\begin{equation}\label{pareto}
g(w) = \displaystyle \frac{\alpha x_m^{\alpha}}{w^{\alpha+1}}, ~\forall w \geq x_m,
\end{equation}
Then for $w \geq x_{m}$, we have:
\[ \overline{F}(w):= \int_{w}^{\infty} g(x) \mathrm d x =  x_m^{\alpha}w^{-\alpha},\]
and hence clearly $\overline{F} \in \RV_{-\alpha}$. Note that in Grundy's model discussed in Section \ref{lb}, we obtain a characterization of the distribution $F^{*}$ as follows:
\begin{equation} \label{fff}
\displaystyle {F}^*(w) =  \mathbb{P}(\tilde{d}^* \leq w) = \left\{\begin{array}{ll}
\displaystyle  \displaystyle1-m_{\alpha} \left(\displaystyle\frac{\alpha-1}{{\alpha}w}\right)^{\alpha}, & \textrm{if } \displaystyle w > \displaystyle\frac{\alpha-1}{\alpha} m_{\alpha}^{1/\alpha},\\
\displaystyle 0 & \textrm{if }\displaystyle 0 \leq w \leq \displaystyle\frac{\alpha-1}{\alpha} m_{\alpha}^{1/\alpha}. \end{array}\right.
\end{equation}
This defines a Pareto random variable as follows:
\begin{equation} \label{g}
\displaystyle \tilde{d}^* = \mbox{Pareto}\left(\frac{(\alpha-1)m_{\alpha}^{1/\alpha}}{\alpha},{\alpha}\right),
\end{equation}
where $\overline{F}^* \in \RV_{-\alpha}$.
\texitem{(b)} $\tilde{t}_{\nu}(\mu,\sigma^{2})$: The t-location scale random variable is defined with three parameters - a location parameter $\mu > 0$ and a scale parameter $\sigma > 0$ and degree of freedom parameter $\nu$. This distribution is regularly varying at infinity with index $\nu$. Note that in Scarf's model discussed in Section \ref{related}, we have $\overline{F}^* \in \RV_{-2}$.

Regularly varying functions have a rich theory (see Bingham, Goldie and Teugels \cite{bingham}, de Haan \cite{dehaan}) and has found many applications in the study of power-law distributions and extreme risk behavior in insurance, finance, telecommunication, social networks (see Embrechts, Kl\"u{ppelberg, Mikosch \cite{embrechts}, Resnick \cite{resnick} for details). The class of regularly varying functions admits certain nice properties with respect to summation, composition, taking quotients, integrating and differentiating which helps in understanding tail behavior of regularly varying random variables, their moments  and other functionals. The following result below attributed to Karamata \cite{karamata} shows the effect of integration on regularly varying functions. We state the special case relating to regularly varying distributions with at least first moment finite (see Resnick \cite[Theorem 0.6(a)]{resnick}). In the following theorems, one can think of $U$ as the distribution tail and $u$ as the density in the context of distribution functions.

\begin{theorem}[Karamata's Theorem] \label{thm:karamata}
Suppose $u:\Re_{+}\to \Re_{+}$ satisfies  $u\in \RV_{-\alpha}$ for some $\alpha>1$. Then $\int_{x}^{\infty} u(t) \,\mathrm dt$ is finite, $\int_{x}^{\infty} \, u(t) \mathrm dt \in \RV_{-\alpha+1}$ and:
\begin{align*}\label{lim:kar}
\lim_{x\to \infty} \frac{xu(x)}{\int_{x}^{\infty} u(t) \,\mathrm dt} = \alpha-1.
\end{align*}
\end{theorem}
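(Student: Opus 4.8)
The plan is to set $U(x) := \int_x^\infty u(t)\,\mathrm dt$ and prove the three claims in the order finiteness, the limit, and then regular variation of $U$, since the last follows almost immediately once the limit is established. I would first record the structural consequences of $u \in \RV_{-\alpha}$: writing $u(x) = x^{-\alpha} L(x)$ with $L$ slowly varying, the uniform convergence theorem for slowly varying functions (applicable since a density is measurable) yields Potter-type bounds, namely that for any $\delta > 0$ there is $X$ with
\begin{equation*}
\frac{u(sx)}{u(x)} \leq (1+\delta)\, s^{-\alpha+\delta}, \qquad \forall\, x \geq X,\ s \geq 1.
\end{equation*}
Choosing $\delta < \alpha - 1$ makes $s \mapsto s^{-\alpha+\delta}$ integrable on $[1,\infty)$, and taking $x = X$ gives $u(t) \leq C\, t^{-\alpha+\delta}$ for $t \geq X$, which proves finiteness of $U(x)$.

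For the limit, the key device is the substitution $t = sx$, which rewrites the ratio as $U(x)/(x\,u(x)) = \int_1^\infty (u(sx)/u(x))\,\mathrm ds$. The integrand converges pointwise to $s^{-\alpha}$ by regular variation and is dominated by the integrable function $(1+\delta)\,s^{-\alpha+\delta}$ for $x \geq X$ by the Potter bound above, so dominated convergence gives
\begin{equation*}
\lim_{x\to\infty}\frac{U(x)}{x\,u(x)} = \int_1^\infty s^{-\alpha}\,\mathrm ds = \frac{1}{\alpha-1},
\end{equation*}
which is precisely $\lim_{x\to\infty} x\,u(x)/U(x) = \alpha - 1$.

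Regular variation of $U$ then drops out by factoring, for each fixed $t > 0$,
\begin{equation*}
\frac{U(tx)}{U(x)} = \frac{U(tx)}{tx\,u(tx)}\cdot t\,\frac{u(tx)}{u(x)}\cdot \frac{x\,u(x)}{U(x)},
\end{equation*}
and letting $x\to\infty$: the first and third factors tend to $1/(\alpha-1)$ and $\alpha-1$ by the limit just proved (the first applied at the point $tx \to \infty$), while the middle factor tends to $t\cdot t^{-\alpha} = t^{-\alpha+1}$ by regular variation of $u$; the product is $t^{-\alpha+1}$, so $U \in \RV_{-\alpha+1}$.

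The main obstacle is the interchange of limit and integral, which requires a uniform, integrable dominating bound over the unbounded range $s \in [1,\infty)$ rather than mere pointwise convergence. Supplying this is exactly the role of Potter's bounds (equivalently the uniform convergence theorem for slowly varying functions), and proving them for a general measurable $u$ is the genuine analytic input; the remaining manipulations are routine.
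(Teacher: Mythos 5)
Your proof is correct. Note that the paper does not prove this statement at all---it is stated as a classical result and cited to Karamata and to Resnick's Theorem 0.6(a)---so there is no in-paper argument to compare against; your argument (the substitution $t=sx$ to rewrite $U(x)/(xu(x))$ as $\int_1^\infty u(sx)/u(x)\,\mathrm ds$, Potter bounds to supply an integrable dominating function, dominated convergence for the limit, and the three-factor identity to deduce $U\in\RV_{-\alpha+1}$) is exactly the standard proof found in those references, and you correctly identify the uniform-convergence/Potter step as the one place where real analytic work is needed.
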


The next result provides the reverse implication to Karamata's theorem and shows what happens when a regularly varying function is differentiated; see Landau \cite{landau}, Bingham, Goldie and Teugels \cite[Theorem 1.6.1]{bingham}, de Haan \cite[p. 23]{dehaan}, Resnick \cite[Theorem 0.7]{resnick} for different formulations and proofs.
\begin{theorem} \label{thm:karamatainv}
Suppose $u:\Re_{+}\to \Re_{+}$ is locally integrable in $[0,\infty)$ and define:
\[ U(x) := \int_{x}^{\infty} u(t) \, \mathrm dt.\]
\begin{enumerate}[label=(\alph*)]
\item If for $\alpha>0$, we have  functions $u$ and $U$ satisfying:
\begin{align}\label{eq:vonmises}
\lim\limits_{x\to\infty} \frac{xu(x)}{U(x)} = -\alpha ,
\end{align}
then $U\in \RV_{-\alpha}$.
\item If $U\in \RV_{-\alpha}$ for $\alpha>0$ and $u$ is monotone, then \eqref{eq:vonmises} holds and $ u \in \RV_{-\alpha-1}$.
\end{enumerate}
\end{theorem}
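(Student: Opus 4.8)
The plan is to prove the two parts by different classical routes: part (a) by the logarithmic-derivative (Karamata representation) argument, and part (b) by the monotone density theorem, which sandwiches increments of $U$ between values of $u$. One preliminary remark sets up both parts: since $U(x)=\int_x^\infty u(t)\,\mathrm dt$ is finite and, in part (b), vanishes at infinity because $U\in\RV_{-\alpha}$ with $\alpha>0$, a positive monotone $u$ cannot be nondecreasing, so ``monotone'' in fact forces $u$ to be eventually nonincreasing; I will use this form throughout part (b).

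For part (a), I would first record that local integrability of $u$ makes $U$ locally absolutely continuous with $U'(x)=-u(x)$ for almost every $x$, and that $U(x)>0$ for all large $x$. Writing $\varepsilon(t):=t\,u(t)/U(t)$, condition \eqref{eq:vonmises} gives $\varepsilon(t)\to\alpha$, so that the logarithmic derivative of $U$, namely $x\,U'(x)/U(x)=-\varepsilon(x)$, tends to $-\alpha$, which is the von Mises condition for membership in $\RV_{-\alpha}$. Integrating the identity $(\log U)'(t)=-\varepsilon(t)/t$ yields $\log\!\big(U(x)/U(x_0)\big)=-\int_{x_0}^{x}\varepsilon(t)\,\mathrm dt/t$, the Karamata representation of a function in $\RV_{-\alpha}$. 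To keep the argument self-contained I would then verify regular variation directly: for each fixed $s>0$,
\[
\frac{U(sx)}{U(x)}=\exp\!\left(-\int_{x}^{sx}\frac{\varepsilon(t)}{t}\,\mathrm dt\right)\longrightarrow \exp(-\alpha\log s)=s^{-\alpha},
\]
where the limit follows from $\int_x^{sx}\mathrm dt/t=\log s$ together with $\varepsilon(t)\to\alpha$, which forces $\int_x^{sx}\big(\varepsilon(t)-\alpha\big)\,\mathrm dt/t\to0$. Hence $U\in\RV_{-\alpha}$.

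For part (b), with $u$ nonincreasing I would exploit monotonicity to trap increments of $U$. For $\lambda>1$,
\[
(\lambda-1)\,x\,u(\lambda x)\ \le\ U(x)-U(\lambda x)\ \le\ (\lambda-1)\,x\,u(x),
\]
and dividing by $U(x)$ and letting $x\to\infty$ (using $U(\lambda x)/U(x)\to\lambda^{-\alpha}$) gives $\liminf_{x}x\,u(x)/U(x)\ge(1-\lambda^{-\alpha})/(\lambda-1)$. The symmetric sandwich over $[\lambda x,x]$ for $0<\lambda<1$ gives $\limsup_{x}x\,u(x)/U(x)\le(\lambda^{-\alpha}-1)/(1-\lambda)$. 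Letting $\lambda\to1$ in each bound, both right-hand sides converge to $\alpha$, namely the value of $\tfrac{\mathrm d}{\mathrm d\lambda}(-\lambda^{-\alpha})$ at $\lambda=1$, so $x\,u(x)/U(x)\to\alpha$, which establishes \eqref{eq:vonmises}. From this, writing $u(x)=\big(\alpha+o(1)\big)U(x)/x$ and using $U\in\RV_{-\alpha}$ together with the trivial membership $x\in\RV_{1}$ --- equivalently, computing $u(tx)/u(x)\to t^{-\alpha}\,t^{-1}=t^{-\alpha-1}$ for each $t>0$ --- gives $u\in\RV_{-\alpha-1}$.

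I expect the main obstacle to be the passage $\lambda\to1$ in part (b). The sandwich inequalities hold only for each fixed $\lambda$, so the correct order of operations is to take $\liminf$/$\limsup$ in $x$ first with $\lambda$ frozen, and only afterwards optimize over $\lambda$; one must also check that the two difference quotients $(1-\lambda^{-\alpha})/(\lambda-1)$ and $(\lambda^{-\alpha}-1)/(1-\lambda)$ share the common limit $\alpha$, so that the $\liminf$ and $\limsup$ are squeezed to the same value. The secondary technical points are the almost-everywhere identity $U'=-u$ that underlies the representation in part (a), and the observation that a positive monotone density with finite, vanishing $U$ must be nonincreasing; both are routine but are exactly where a little care is needed.
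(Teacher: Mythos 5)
Your proof is correct, and it is essentially the classical argument: the paper itself gives no proof of this theorem, citing it as a known result (Landau; Bingham--Goldie--Teugels, Theorem 1.6.1; de Haan; Resnick, Theorem 0.7), and your two routes --- the Karamata/von Mises representation $U(sx)/U(x)=\exp\bigl(-\int_x^{sx}\varepsilon(t)\,\mathrm dt/t\bigr)$ for part (a), and the monotone-density sandwich $(\lambda-1)xu(\lambda x)\le U(x)-U(\lambda x)\le(\lambda-1)xu(x)$ followed by $\liminf/\limsup$ in $x$ first and $\lambda\to1$ afterwards for part (b) --- are exactly the proofs found in those references. One point worth making explicit: as displayed, \eqref{eq:vonmises} cannot literally hold, since $u\ge0$ and $U\ge0$ force $xu(x)/U(x)\ge0$, whereas $-\alpha<0$; the intended condition is $xu(x)/U(x)\to\alpha$, equivalently $xU'(x)/U(x)\to-\alpha$ with $U'=-u$ a.e. You silently adopt this corrected reading (your $\varepsilon(t)\to\alpha$ in part (a), and your conclusion $xu(x)/U(x)\to\alpha$ in part (b)), which is the right call and is consistent with how the paper invokes part (b) in the proof of Theorem \ref{thm1} to pass from $\int_q^\infty\overline{F}^*\in\RV_{-(\alpha-1)}$ to $\overline{F}^*\in\RV_{-\alpha}$; it would be cleaner to state the sign correction rather than leave it implicit. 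Your preliminary observation that a positive monotone $u$ with finite, vanishing $U$ must be eventually nonincreasing, and your final step $u(tx)/u(x)\to t^{-\alpha}t^{-1}$, are both sound.
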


\subsection{\DAS{From the Ambiguity Set $\mathcal{F}_{1,\alpha}$ to a Regularly Varying Distribution $F^*$}}
Propositions \ref{prop1} and \ref{prop2} indicate that in fact the tails of the worst-case expected value are close to a power-law (Pareto-like) tail with index $\alpha-1$. In this section, we show that there exists a random variable $d^{*} \sim F^{*}$ which attains the worst-case expected value for large values of $q$ using the theory of regularly varying functions.


\begin{theorem}\label{thm1}
Given the ambiguity set $\mathcal{F}_{1,\alpha}$ as defined in \eqref{nmom}, the following holds:
\begin{enumerate}[label=(\alph*)]
\item The worst-case expected value as a function of $q$ satisfies:
$$\Pi_{1,\alpha}(q):=  \sup_{{F} \in \mathcal{F}_{1,\alpha}}  \mathbb{E}_{{F}}[\tilde{d}-q]_+ \in \RV_{-(\alpha-1)}.$$
\item There exists a distribution function $F^{*}$ that does not lie in $\mathcal{F}_{1,\alpha}$, such that for $\tilde{d}^{*} \sim F^{*}$, we have:
$$ \mathbb{E}_{{F^{*}}}[\tilde{d}^{*}-q]_+=\sup_{{F} \in \mathcal{F}_{1,\alpha}}\  \mathbb{E}_{{F}}[\tilde{d}-q]_+,  \quad \forall q \geq 0.$$
Furthermore, $\overline{F}^{*} \in \RV_{-{\alpha}}$ with $\mathbb{E}_{F^{*}} [(\tilde{d}^{*})^{\alpha_1}]<\infty$ for all $0\le \alpha_1<\alpha$ and $\mathbb{E}_{F^{*}} [(\tilde{d}^{*})^{\alpha_1}]=\infty$ if $\alpha_1\ge \alpha$.
\end{enumerate}
\end{theorem}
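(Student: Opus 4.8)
The plan is to first pin down the exact leading-order behaviour of $\Pi_{1,\alpha}$ from the sandwich in Propositions \ref{prop1} and \ref{prop2}, and then transfer regular variation from the integrated tail $\Pi_{1,\alpha}$ to the tail $\overline{F}^*$ by means of the Karamata theorems. For part (a), write $C := (m_\alpha-m_1^\alpha)(\alpha-1)^{\alpha-1}/\alpha^\alpha$. The lower bound of Proposition \ref{prop1} equals $Cq^{-(\alpha-1)}$ exactly for $q>\underline{q}$, while the upper bound of Proposition \ref{prop2}---in either the $\alpha>2$ form or, for fixed $\epsilon$, the $\alpha\in(1,2)$ form---has the shape $(m_\alpha-m_1^\alpha)(\alpha-1)^{\alpha-1}/(\alpha^\alpha q^{\alpha-1}-\text{const})$ and is therefore asymptotically equivalent to $Cq^{-(\alpha-1)}$ as $q\to\infty$. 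Squeezing gives $\lim_{q\to\infty}q^{\alpha-1}\Pi_{1,\alpha}(q)=C$, so $\Pi_{1,\alpha}(q)\sim Cq^{-(\alpha-1)}$; since a function asymptotic to a pure power of exponent $-(\alpha-1)$ is regularly varying of that index, this proves $\Pi_{1,\alpha}\in\RV_{-(\alpha-1)}$.

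For part (b) I would first note that $\Pi_{1,\alpha}(q)=\sup_{F\in\mathcal{F}_{1,\alpha}}\mathbb{E}_F[\tilde d-q]_+$ is convex and non-increasing in $q$ (as a supremum of convex, non-increasing functions), with $\Pi_{1,\alpha}(0)=m_1$ and $\Pi_{1,\alpha}(q)\to 0$, the latter directly from the upper bound. These are precisely the properties invoked in the Shapiro--Kleywegt construction \eqref{f}, so there exists a non-negative $\tilde d^*\sim F^*$ with $F^*(q)=1+\Pi'_{1,\alpha,+}(q)$ and $\mathbb{E}_{F^*}[\tilde d^*-q]_+=\Pi_{1,\alpha}(q)$ for every $q\ge 0$. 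Setting $u:=\overline{F}^*=-\Pi'_{1,\alpha,+}$ and $U:=\Pi_{1,\alpha}$, convexity of $\Pi_{1,\alpha}$ renders $u$ monotone (non-increasing), and the layer-cake identity $\mathbb{E}_{F^*}[\tilde d^*-q]_+=\int_q^\infty\overline{F}^*(t)\,\mathrm dt$ gives $U(q)=\int_q^\infty u(t)\,\mathrm dt$.

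Since $U=\Pi_{1,\alpha}\in\RV_{-(\alpha-1)}$ by part (a) and $u$ is monotone, Theorem \ref{thm:karamatainv}(b) applies with index $\alpha-1$ and yields $\overline{F}^*=u\in\RV_{-\alpha}$. Karamata's Theorem \ref{thm:karamata}, applied now to $\overline{F}^*\in\RV_{-\alpha}$ with $\alpha>1$, then gives $\lim_{q\to\infty}q\,\overline{F}^*(q)/\Pi_{1,\alpha}(q)=\alpha-1$ because $\int_q^\infty\overline{F}^*(t)\,\mathrm dt=\Pi_{1,\alpha}(q)$. Combining this with $\Pi_{1,\alpha}(q)\sim Cq^{-(\alpha-1)}$ from part (a) upgrades regular variation into the clean constant-coefficient asymptotic $\overline{F}^*(q)\sim(\alpha-1)C\,q^{-\alpha}$, with the strictly positive constant $(\alpha-1)C$.

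The moment claim is where this sharper asymptotic---rather than mere regular variation---is needed, and I expect the boundary exponent $\alpha_1=\alpha$ to be the main obstacle. Starting from $\mathbb{E}_{F^*}[(\tilde d^*)^{\alpha_1}]=\int_0^\infty\alpha_1 w^{\alpha_1-1}\overline{F}^*(w)\,\mathrm dw$: when $\alpha_1<\alpha$ the integrand lies in $\RV_{\alpha_1-1-\alpha}$ with index strictly below $-1$, so Karamata's Theorem \ref{thm:karamata} makes its tail integral finite and hence the moment finite; when $\alpha_1\ge\alpha$ the integrand is asymptotic to $\alpha_1(\alpha-1)C\,w^{\alpha_1-1-\alpha}$ with exponent at least $-1$, hence non-integrable at infinity, so the moment diverges. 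In particular $\mathbb{E}_{F^*}[(\tilde d^*)^{\alpha}]=\infty$, which also forces $F^*\notin\mathcal{F}_{1,\alpha}$ since membership would require a finite $\alpha$th moment equal to $m_\alpha$; this completes both parts.
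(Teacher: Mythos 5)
Your proposal is correct and follows the paper's skeleton for part (a) (squeezing $\Pi_{1,\alpha}$ between the bounds of Propositions \ref{prop1} and \ref{prop2}) and for the first half of part (b) (the Shapiro--Kleywegt construction of $F^*$, the layer-cake identity $\Pi_{1,\alpha}(q)=\int_q^\infty\overline{F}^*$, and the converse Karamata theorem to get $\overline{F}^*\in\RV_{-\alpha}$). Where you genuinely diverge is the proof that $\mathbb{E}_{F^*}[(\tilde d^*)^{\alpha}]=\infty$: the paper gets a pointwise lower bound $\overline{F}^*(q)\ge q^{-1}\bigl[\Pi_{1,\alpha}(q)-\Pi_{1,\alpha}(2q)\bigr]\ge C_3 q^{-\alpha}$ by playing the explicit lower bound at $q$ against the explicit upper bound at $2q$, whereas you apply the forward Karamata theorem to $\overline{F}^*\in\RV_{-\alpha}$ together with the exact asymptotic $\Pi_{1,\alpha}(q)\sim Cq^{-(\alpha-1)}$ to obtain the sharper statement $\overline{F}^*(q)\sim(\alpha-1)C\,q^{-\alpha}$, from which divergence of the $\alpha$th (and higher) moments is immediate. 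Your route is cleaner, buys the exact tail constant of $F^*$ (which the paper never extracts), and makes the non-membership $F^*\notin\mathcal{F}_{1,\alpha}$ explicit. One small omission: Proposition \ref{prop2} is stated only for $\alpha\in(1,2)$ and $\alpha\in(2,\infty)$, so your sandwich does not cover $\alpha=2$; the paper disposes of that case separately via Scarf's closed form \eqref{e2}, and you should add the same remark (or note that the conclusion for $\alpha=2$ follows from the explicit censored student-$t$ representation \eqref{f1}--\eqref{g1}).
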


\begin{proof}
\DAS{Note that the case $\alpha=2$ boils down to Scarf's model  discussed in Section \ref{subsec:scarf}. From \eqref{e2}, we have for $q\ge m_{2}/2m_{1}$,
\begin{align*}
\Pi_{1,2}(q)=  \frac 12 \left(\sqrt{q^{2}-2m_{1}q+m_{2}} -(q-m_{1})\right).
\end{align*}
Now it is easy to check that:
\begin{align*}
\lim_{t\to\infty} \frac{\Pi_{1,2}(tq)}{\Pi_{1,2}(t)} = \lim_{t\to\infty} \frac{\frac{m_{2}}{2tq} + o(1/t)}{\frac{m_{2}}{2t} + o(1/t)} = \frac 1q.
\end{align*}
Hence $\Pi_{1,2}(q) \in \RV_{-1}$ which is as claimed in (a). Moreover, from \eqref{f1}-\eqref{h1}, we have $\tilde{d}^{*}\sim F^{*}$ satisfying $(b)$ where $\overline{F}^* \in \RV_{-2}$ and  is a censored $t$-distribution with shape parameter (degree of freedom) $\nu=2$ for which $\mathbb{E}_{F^*}[\tilde{d}^{*2}]=\infty$. We now concentrate on $\alpha\neq2$ for the rest of the proof.}

\noindent \DAS{\textit{(a)}  Notice that from Proposition \ref{prop1}, for $q > \underline{q}(m_1,m_{\alpha},\alpha)$, we have:
\begin{align}\label{pp1}
\Pi_{1,\alpha}(q) = \sup_{{F} \in \mathcal{F}_{1,\alpha}}\  \mathbb{E}_{{F}}[\tilde{d}-q]_+ \ge C_{1} \frac 1{q^{\alpha-1}},
\end{align}
where  $C_{1}=\left(m_{\alpha}-m_{1}^{\alpha}\right)\frac{(\alpha-1)^{\alpha-1}}{{\alpha}^{\alpha}}$.  Furthermore, we have the following upper bounds.
\begin{itemize}
\item[(i)] For $\alpha\in (2,\infty)$, using Proposition \ref{prop2}, for $q > \overline{q}(m_1,\alpha)$, we have:
\begin{align}\label{pp2a}
\Pi_{1,\alpha}(q)\le C_{1} \frac 1{q^{\alpha-1}} \left(1- \frac{C_{2}}{q^{\alpha-1}}\right)^{-1},
\end{align}
where $C_{2}= m_{1}^{\alpha-1}\frac{(\alpha-1)^{\alpha-1}}{\alpha^{\alpha-2}}$.
\item[(ii)] For $\alpha\in (1,2)$, fixing small $\epsilon>0$ using Proposition \ref{prop2}, for $q > \overline{q}(m_1,\alpha,\epsilon)$, we have:
\begin{align}\label{pp2b}
\Pi_{1,\alpha}(q)\le C_{1} \frac 1{q^{\alpha-1}} \left(1- \frac{C_{2}}{q^{\alpha-1}}\right)^{-1},
\end{align}
where $C_{2}= m_{1}^{\alpha-1}\frac{(\alpha-1)^{\alpha-1}}{\alpha^{\alpha-1}} (\alpha+\epsilon)$.
\end{itemize}
Now if $\alpha\in (2,\infty)$, choose $Q^{*}=\max( \underline{q}(m_1,m_{\alpha},\alpha), \overline{q}(m_1,\alpha))$ and if   $\alpha\in (1,2)$, then fix $\epsilon>0$ and choose $Q^{*}=\max( \underline{q}(m_1,m_{\alpha},\alpha), \overline{q}(m_1,\alpha,\epsilon))$.
 Hence combining \eqref{pp1}, \eqref{pp2a} and  \eqref{pp2b}, for $q>Q^{*}$ we get:
\begin{align*}
q^{-(\alpha-1)} \left(1-\frac{C_{2}}{t^{\alpha-1}}\right) \le \frac{\Pi_{1,\alpha}(tq)}{\Pi_{1,\alpha}(t)} \le q^{-(\alpha-1)} \left(1-\frac{C_{2}}{(tq)^{\alpha-1}}\right)^{-1}.
\end{align*}
Since $1-C_{2}/(tq)^{\alpha-1}\to1$ and $1-C_{2}/t^{\alpha-1} \to 1$,  as $t\to\infty$, we can infer that:
\[ \lim_{t\to\infty} \frac{\Pi_{1,\alpha}(tq)}{\Pi_{1,\alpha}(t)} = q^{-(\alpha-1)}.\]
Hence $\Pi_{1,\alpha}(q)=\sup_{{F} \in \mathcal{F}_{1,\alpha}}\  \mathbb{E}_{{F}}[\tilde{d}-q]_+ \in \RV_{-(\alpha-1)}.$}

\noindent \textit{(b)} As a consequence of Theorem 2.1 and Section 3.1, page 32 in Shapiro and Kleywegt \cite{shapiro02}, we observe that given the ambiguity set $\mathcal{F}_{1,\alpha}$, there exists a non-negative random variable $\tilde{d}^{*}$ following a distribution $F^{*}$ such that, for any $q\ge0$,
\begin{align*}
\Pi_{1,\alpha}(q)= \sup_{{F} \in \mathcal{F}_{1,\alpha}}\  \mathbb{E}_{{F}}[\tilde{d}-q]_+ = \mathbb{E}_{F^{*}}[\tilde{d}^{*}-q]_{+} .
\end{align*}
We can write:
\begin{align}
\mathbb{E}_{F^{*}}[\tilde{d}^{*}-q]_{+} & = \int_{q}^{\infty} \Pr(\tilde{d}^{*}>w)\, \mathrm dw = \int_q^{\infty} \overline{F}^{*}(w)\, \mathrm dw, \label{eq:exprv}
\end{align}
where $\overline{F}^{*}=1-F^{*}$. From part (a), we have for $q\to \infty$,
\[\int_q^{\infty} \overline{F}^{*}(w)\, \mathrm dw  = \mathbb{E}_{F^{*}}[d^{*}-q]_{+} = \Pi_{1,\alpha}(q) \in \RV_{-(\alpha-1)}.\]
Now since $-(\alpha-1)<0$ and $\overline{F}^{*}$ is non-increasing, using Theorem \ref{thm:karamatainv} (b) (the converse part of Karamata's Theorem), we have $\overline{F}^{*} \in \RV_{-{\alpha}}$. Note that for any $\alpha_1\ge 0$, and some $C>0$, we have:
\begin{align*}
\mathbb{E}_{F^{*}}[(\tilde{d}^{*})^{\alpha_1}] = \int_{0}^{C} t^{\alpha_1-1} \overline{F}^{*}(t) \, \mathrm d t + \int_{C}^{\infty} t^{\alpha_1-1} \overline{F}^{*}(t) \, \mathrm d t.
\end{align*}
The first sum in the summand is bounded above by $C^{\alpha_1}$ which is finite. The integrand in the second term $t^{\alpha_1-1} \overline{F}^{*}(t) \in \RV_{\alpha_2}$ where $\alpha_2=-(\alpha-\alpha_1)-1$. For $\alpha_1<\alpha$, we have $\alpha_2<-1$ and  using Theorem \ref{thm:karamata}, $\int_{C}^{\infty} t^{\alpha_1-1} \overline{F}^{*}(t) \, \mathrm d t $ is finite (which is what we need) and regularly varying with index $(\alpha-\alpha_1)$. Hence for $\alpha_1<\alpha$, we have $\mathbb{E}_{F^{*}}[(\tilde{d}^{*})^{\alpha_1}]<\infty$.
Finally, we show that $\mathbb{E}_{F^{*}}[(\tilde{d}^{*})^{\alpha}] = \infty$ which implies that any higher moment will also be infinite.
Note that for any $q>0$ we have
\begin{align*}
\Pi_{1,\alpha}(q) - \Pi_{1,\alpha}(2q) & = \bE_{F^{*}}[\tilde{d}^{*}-q]_{+} - \bE_{F^{*}}[\tilde{d}^{*}-2q]_{+},\\
		   & =  \int_{q}^{2q} \overline{F}^{*} (y)\, \mathrm dy,\\
		   & \le q\oFs(q),
\end{align*}
since $\oFs$ is non-increasing. Hence, for large enough $q$ satisfying both  \eqref{pp1} and \DAS{ \eqref{pp2a}  (or \eqref{pp2b} depending on the value of $\alpha$)},  we have
\begin{align*}
\oFs(q) &\ge \frac1q\left[\Pi_{1,\alpha}(q)-\Pi_{1,\alpha}(2q)\right],\\
            & \ge \frac1q \left[\frac{C_{1}}{q^{\alpha-1}} - \frac{C_{1}}{(2q)^{\alpha-1}} \left(1-\frac{C_{2}}{(2q)^{\alpha-1}}\right)^{-1}\right],\\
            & \ge  \frac1q \left[\frac{C_{1}}{q^{\alpha-1}} - \frac{C_{1}}{(2q)^{\alpha-1}}\times\left(1-\frac{1}{2^{\alpha-1}}\right)^{-1}\right] \quad\quad \text{(for $q^{\alpha-1}>C_{2}$)},\\
            & = \frac{1}{q^{\alpha}}C_{3},
\end{align*}
where $C_{3}=C_{1}(1-1/(2^{\alpha-1}-1))$. Hence we have for $q$ large enough:
\begin{align*}
\mathbb{E}_{F^{*}}[(\tilde{d}^{*})^{\alpha}] &= \int_{0}^{q} t^{\alpha-1} \overline{F}^{*}(t) \, \mathrm d t + \int_{q}^{\infty} t^{\alpha-1} \overline{F}^{*}(t) \, \mathrm d t,\\
 & \ge  \int_{q}^{\infty} t^{\alpha-1} \overline{F}^{*}(t) \, \mathrm d t,\\
 & \ge  \int_{q}^{\infty} t^{\alpha-1}\frac{C_{3}}{t^{\alpha}}\, \mathrm d t =  C_{3}\int_{q}^{\infty} \frac 1t\, \mathrm d t =\infty.
\end{align*}
Hence for any $\alpha_1\ge \alpha$, we also have $\mathbb{E}_{F^{*}}[(\tilde{d}^{*})^{\alpha_1}] =\infty$.
\end{proof}

\DAS{As a consequence of Theorem \ref{thm1}, we can relate the optimal order quantity of the distributionally robust newsvendor, the optimal worst-case newsvendor cost defined in \eqref{c} and the newly characterized distribution $F^*$, when the critical ratio approaches $1$. While a similar characterization has been previously obtained by researchers in modeling the relationship between Value-at-Risk and Conditional Value-at-Risk in risk management for distributions with regularly varying tails (see Proposition 1 in Hua and Joe \cite{zhuli}), the connection to distributionally robust optimization does not seem to be have been made, to the best of our knowledge.
\begin{prop} \label{newprop}
Consider the ambiguity set $\mathcal{F}_{1,\alpha}$ with $\alpha > 1$. For $\eta \in (0,1)$, let $q_{\eta}^{*}$ be an optimal order quantity to the distributionally robust newsvendor in \eqref{c} and $C_{\eta}^*$ be the optimal cost.
Then $q_{\eta}^{*}$ is also optimal to a standard newsvendor problem with the underlying demand distribution $F^{*}$ described in Theorem \ref{thm1}(b) and satisfies the property:
\begin{align}
q_{\eta}^{*} \sim \frac{\alpha-1}{\alpha}\frac{1}{1-\eta} C_{\eta}^{*}, \quad \text{as} \quad \eta \to1. \label{qcequi}
\end{align}
\end{prop}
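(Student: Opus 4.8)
The plan is to exploit the equivalence established in Theorem \ref{thm1}(b), which converts the distributionally robust problem \eqref{c} into a \emph{standard} newsvendor problem under demand $F^{*}$, and then to feed the regularly varying tail $\overline{F}^{*}$ into Karamata's Theorem (Theorem \ref{thm:karamata}).

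First I would observe that, by Theorem \ref{thm1}(b), for every $q\ge 0$ we have $\Pi_{1,\alpha}(q)=\mathbb{E}_{F^{*}}[\tilde{d}^{*}-q]_{+}$, so the objective in \eqref{c} equals $(1-\eta)q+\mathbb{E}_{F^{*}}[\tilde{d}^{*}-q]_{+}$, which is exactly the standard newsvendor cost with demand $\tilde{d}^{*}\sim F^{*}$. Hence any minimizer $q_{\eta}^{*}$ of \eqref{c} is also optimal for the standard newsvendor with demand $F^{*}$, which settles the first claim. Writing $\Pi_{1,\alpha}(q)=\int_{q}^{\infty}\overline{F}^{*}(w)\,\mathrm dw$ shows that $\Pi_{1,\alpha}$ is convex and differentiable with $\Pi_{1,\alpha}'(q)=-\overline{F}^{*}(q)$, so the first-order condition for the convex objective yields the critical-fractile identity $\overline{F}^{*}(q_{\eta}^{*})=1-\eta$. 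Since $\overline{F}^{*}\in\RV_{-\alpha}$ is a continuous, strictly decreasing tail vanishing at infinity, this holds with equality and forces $q_{\eta}^{*}\to\infty$ as $\eta\to1$ (a bounded subsequence would keep $\overline{F}^{*}(q_{\eta}^{*})$ bounded away from $0$, contradicting $\eta\to1$).

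Next I would apply Karamata's Theorem to $u=\overline{F}^{*}\in\RV_{-\alpha}$ with $\alpha>1$, obtaining
\begin{align*}
\lim_{q\to\infty}\frac{q\,\overline{F}^{*}(q)}{\int_{q}^{\infty}\overline{F}^{*}(w)\,\mathrm dw}=\lim_{q\to\infty}\frac{q\,\overline{F}^{*}(q)}{\Pi_{1,\alpha}(q)}=\alpha-1.
\end{align*}
Evaluating at $q=q_{\eta}^{*}$ and substituting $\overline{F}^{*}(q_{\eta}^{*})=1-\eta$ gives $\Pi_{1,\alpha}(q_{\eta}^{*})\sim \frac{(1-\eta)q_{\eta}^{*}}{\alpha-1}$ as $\eta\to1$. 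Plugging this into the optimal cost $C_{\eta}^{*}=(1-\eta)q_{\eta}^{*}+\Pi_{1,\alpha}(q_{\eta}^{*})$ yields $C_{\eta}^{*}\sim(1-\eta)q_{\eta}^{*}\bigl(1+\tfrac{1}{\alpha-1}\bigr)=\frac{\alpha}{\alpha-1}(1-\eta)q_{\eta}^{*}$, and rearranging produces exactly \eqref{qcequi}.

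The reduction to the standard newsvendor and the differentiation of $\Pi_{1,\alpha}$ are immediate. The only delicate point in the first step is justifying that $q_{\eta}^{*}$ satisfies $\overline{F}^{*}(q_{\eta}^{*})=1-\eta$ exactly (not merely as a generalized inverse) and diverges to infinity; this rests on the continuity and strict monotonicity of $\overline{F}^{*}$ in the tail, guaranteed by $\overline{F}^{*}\in\RV_{-\alpha}$ having a positive density there. The main substantive obstacle is the clean invocation of Karamata's Theorem, which produces the constant $\alpha-1$ linking the $\RV_{-\alpha}$ tail of $\overline{F}^{*}$ to the $\RV_{-(\alpha-1)}$ behavior of $\Pi_{1,\alpha}$; once that ratio is in hand, the remainder is elementary algebra.
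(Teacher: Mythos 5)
Your proposal is correct and follows essentially the same route as the paper: reduce to the standard newsvendor under $F^{*}$ via Theorem \ref{thm1}(b), use the critical-fractile identity $\overline{F}^{*}(q_{\eta}^{*})=1-\eta$, and apply Karamata's Theorem to the ratio $q\,\overline{F}^{*}(q)/\int_{q}^{\infty}\overline{F}^{*}(w)\,\mathrm dw$ to obtain the constant $\alpha/(\alpha-1)$. Your added care in justifying the fractile equality and the divergence $q_{\eta}^{*}\to\infty$ is a small refinement of details the paper leaves implicit, not a different argument.
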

\begin{proof}
Note that from Theorem  \ref{thm1}(b) , we have
\begin{align*}
q_{\eta}^{*} &=  \arg\min_{q\in \Re_+} \sup_{F\in \mathcal{F}_{1,\alpha}} \left((1-\eta)q+\mathbb{E}_{F}[\tilde{d}-q]_{+} \right) =   \arg\min_{q\in \Re_+}\left((1-\eta)q + \mathbb{E}_{F^{*}}[\tilde{d}-q]_{+} \right),
\end{align*}
and hence is optimal for  \eqref{c1} with $F\equiv F^{*}$ and $\tilde{d}^{*}\sim F^{*}$. Moreover, we have $1-\eta= \mathbb{P}(\tilde{d}^{*}>q_{\eta}^{*})$, and,
\begin{align*}
C_{\eta}^{*} &=  \min_{q\in \Re_+} \sup_{F\in \mathcal{F}_{1,\alpha}} \left((1-\eta)q + \mathbb{E}_{F}[\tilde{d}-q]_{+} \right),\\
  & =  \min_{q\in \Re_+}  \left((1-\eta)q+\mathbb{E}_{F^{*}}[\tilde{d}^{*}-q]_{+} \right),\\
  & = (1-\eta)q^{*}_{\eta}+\mathbb{E}_{F^{*}}[\tilde{d}^{*}-q_{\eta}^{*}]_{+}.
\end{align*}
Since $ \mathbb{P}(\tilde{d}^{*}>x) = \overline{F}^{*}(x) \in \RV_{-\alpha}$, a direct application of Karamata's theorem (cf. \cite{zhuli}, page 351) yields:
\begin{align*}
\lim_{\eta\uparrow 1} \frac{C_{\eta}^{*}}{(1-\eta)q_{\eta}^{*}} & = \lim_{\eta\uparrow 1} \frac{(1-\eta)q^{*}_{\eta}+\mathbb{E}_{F^{*}}[\tilde{d}^{*}-q_{\eta}^{*}]_{+} }{(1-\eta)q_{\eta}^{*}}, \\
               & = 1 +  \lim_{\eta\uparrow 1} \frac{\int_{q_{\eta}^{*}}^{\infty}  \mathbb{P}(\tilde{d}^{*}>x)\; \mathrm dx}{q_{\eta}^{*} \mathbb{P}(\tilde{d}^{*}>q_{\eta}^{*})}, \\
               &  \displaystyle = 1+ \frac{1}{\alpha-1}, \\
               & \displaystyle =\frac{\alpha}{\alpha-1}.
\end{align*}
\end{proof}
}


{\DAS{
\section{Numerical Examples}
In this section, we provide numerical examples to compare the performance of a classical newsvendor model where the demand is assumed to be known with the distributionally robust newsvendor model. We consider the following three demand distributions that possess different kinds of tail behavior:
\texitem{(a)} Exponential random variable with mean $50$
\texitem{(b)} Lognormal random variable with parameters $m = \log(50/\sqrt{2})$ and $s = \sqrt{\log(2)}$
\texitem{(c)} Pareto random variable with shape parameter $\beta = 1 +\sqrt{2}$ and scale parameter $x_m= 50\sqrt{2}/(1+\sqrt{2})$.

The exponential distribution is light-tailed where all moments of finite order exist, the lognormal distribution is heavy-tailed where all moments of finite order exist ,while the Pareto random variable is a heavy-tailed distributon with finite moments only for $\alpha < \beta$. Among these three distributions, only the Pareto distribution is regularly varying. The parameter of the demand distributions are selected such that for all three distributions, the mean is $50$ and standard deviation is $50$. Hence, Scarf's model would prescribe exactly the same optimal order quantity in all the three cases. On the other hand, since the moments $m_{\alpha}$ are different for these demand distributions when $\alpha$ is not equal to $2$, the order quantities from the distributionally robust newsvendor models would change for other values of $\alpha$.

In the numerical experiments, to find the robust optimal order quantities, one approach is to directly use the dual SDP formulations discussed in Section \ref{subsec:3.3}. For example, for $\alpha = 3$, this would reduce to solving:
\begin{equation} \label{dualf3a}
\begin{array}{rlll}
\displaystyle \min_{q,y_0,y_1,y_3,a_1,b_1,c_1,a_2,b_2,c_2} &(1-\eta)q+y_0 + y_1 m_1 + y_{3} m_{3} &\\
\textrm{s.t.} & \begin{bmatrix}
    y_0& 0 &a_1& b_1\\
   0 & y_1-2a_1& -b_1 & c_1 \\
   a_1 & -b_1 & -2c_1 & 0 \\
   b_1& c_1 & 0 & y_3
  \end{bmatrix}\succeq 0 ,  & \\
& \begin{bmatrix}
    y_0+q& 0 &a_2 & b_2\\
   0 & y_1-1-2a_2&-b_2 & c_2 \\
   a_2 & -b_2 & -2c_2 & 0 \\
   b_2& c_2 & 0 & y_3
  \end{bmatrix}\succeq 0 , &\\
  & q \geq 0,
\end{array}
\end{equation}
However, a standard reformulation comes at the price that for large values of $\alpha$, the SDP involves several additional variables, besides $q, y_0, y_1, y_{\alpha}$. In our setting, since the dual constraints are equivalent to nonnegativity constraints of sparse univariate polynomials, we can use a technique from relative entropy reformulations for signomial optimization which preserves sparsity (see Chandrasekaran and Shah \cite{chandrashekaran}). Specifically for any $\alpha > 1$, the problem is given as follows:
\begin{equation} \label{dualf111}
\begin{array}{rlll}
\min & (1-\eta)q + y_0 + y_1 m_1 + y_{\alpha} m_{\alpha} &\\
\textrm{s.t.} & y_0 + y_{\alpha} w^{\alpha} \geq  - y_1 w,  & \forall w \geq 0,\\
& y_0+q + y_{\alpha} w^{\alpha} \geq  (1- y_1) w, & \forall w \geq 0 \\
& q \geq 0,
\end{array}
\end{equation}
where the variables $y_0$ and $y_{\alpha}$ must be nonnegative for feasibility. Using a change of variable with $w = e^z$ where $z \in \Re$ and dividing the first two constraints throughout by $e^z$, we get:
\begin{equation} \label{dualf111a}
\begin{array}{rlll}
\min & (1-\eta)q + y_0 + y_1 m_1 + y_{\alpha} m_{\alpha} &\\
\textrm{s.t.} & y_0  e^{-z} + y_{\alpha} e^{(\alpha-1) z} \geq  - y_1,  & \forall z ,\\
& (y_0+q) e^{-z} + y_{\alpha} e^{(\alpha-1)z} \geq  1- y_1, & \forall z. \\
& q \geq 0,
\end{array}
\end{equation}
Equivalently, the formulation reduces to:
\begin{equation} \label{dualf111b}
\begin{array}{rlll}
\min & (1-\eta)q + y_0 + y_1 m_1 + y_{\alpha} m_{\alpha} &\\
\textrm{s.t.} & \displaystyle \min_{z} y_0  e^{-z} + y_{\alpha} e^{(\alpha-1) z} \geq  - y_1,  & \\
& \displaystyle \min_{z} (y_0+q) e^{-z} + y_{\alpha} e^{(\alpha-1)z} \geq  1- y_1, &  \\
& q \geq 0,
\end{array}
\end{equation}
where the minimization problems over $z$ are convex optimization problems, since the coefficients of $e^{-z}$ and $e^{(\alpha-1)z}$. Now using Lagrangian duality (see Lemma 1 on page 1150 in \cite{chandrashekaran}), we can rewrite the problem as a relative entropy optimization problem:
\begin{equation} \label{dualf111c}
\begin{array}{rlll}
\min_{q,y_0,y_1,y_{\alpha},v_1,v_2,v_3,v_4} & (1-\eta)q + y_0 + y_1 m_1 + y_{\alpha} m_{\alpha} &\\
\textrm{s.t.} & \displaystyle v_1 \log\left(\frac{v_1}{e y_0}\right) + v_2\log\left(\frac{v_2}{e y_{\alpha}}\right) \leq  y_1,  & \\
& \displaystyle  v_3 \log\left(\frac{v_3}{e (y_0+q)}\right) + v_4\log\left(\frac{v_4}{e y_{\alpha}}\right)  \leq  y_1-1, &  \\
& (\alpha-1)v_2 = v_1,\\
& (\alpha-1)v_4 = v_3,\\
& q, v_1, v_2, v_3, v_4 \geq 0,
\end{array}
\end{equation}
which is a convex optimization problem in the variables $q,y_0,y_1,y_{\alpha},v_1,v_2,v_3,v_4$. The advantage of solving (\ref{dualf111c}) is that the size of the problem formulation does not grow unlike the SDP reformulation, thus exploiting sparsity and is particularly efficient when solving the problem for large values of $\alpha$. Such a relative entropy formulation can be solved using an off the shelf convex optimization solver such as MOSEK.

\subsection{Value of Incorporating Moments Beyond Scarf's Model}
We compute the optimal order quantities for the distributionally robust model assuming the highest order moment is given for $\alpha = 4/3, 3/2, 7/4, 2, 3, 5$ and $8$ respectively in cases (a) and (b), while for case (c), we consider $\alpha = 4/3, 3/2, 7/4$ and $2$ only. Note that for the Pareto random variable in case (c), the moments are finite only for $\alpha < 1+\sqrt{2} \approx 2.4142$. We estimate the optimal order quantities for critical ratios $\eta$ in the range $[0.97,0.99998]$.

In Figures \ref{m1}, \ref{m2} and \ref{m3}, we provide the log-log plots of the distributionally robust optimal order quantities and the optimal order quantities for the exponential, lognormal and Pareto distributions respectively. We observe in all the three figures that as higher order moment information is assumed to be known (higher values of $\alpha$), the robust solution gets closer to the optimal order quantity for the distribution, as the critical ratio gets higher. For the lognormal demand distribution, for the specified range of critical ratios, we observe that the optimal order quantity for $\alpha = 8$ still exceeds $\alpha = 5$, but as the critical ratio increases further, this result is reversed (see Table \ref{m2-supp}). Note that on the other hand, Scarf's model would prescribe the same optimal order quantity for all three cases and does not capture the tail behavior. This clearly indicates the value of having additional moment information in better approximating the tail behavior of the optimal order quantity for a given distribution.

\begin{figure}[!htbp]
\centering
\includegraphics[width=11cm] {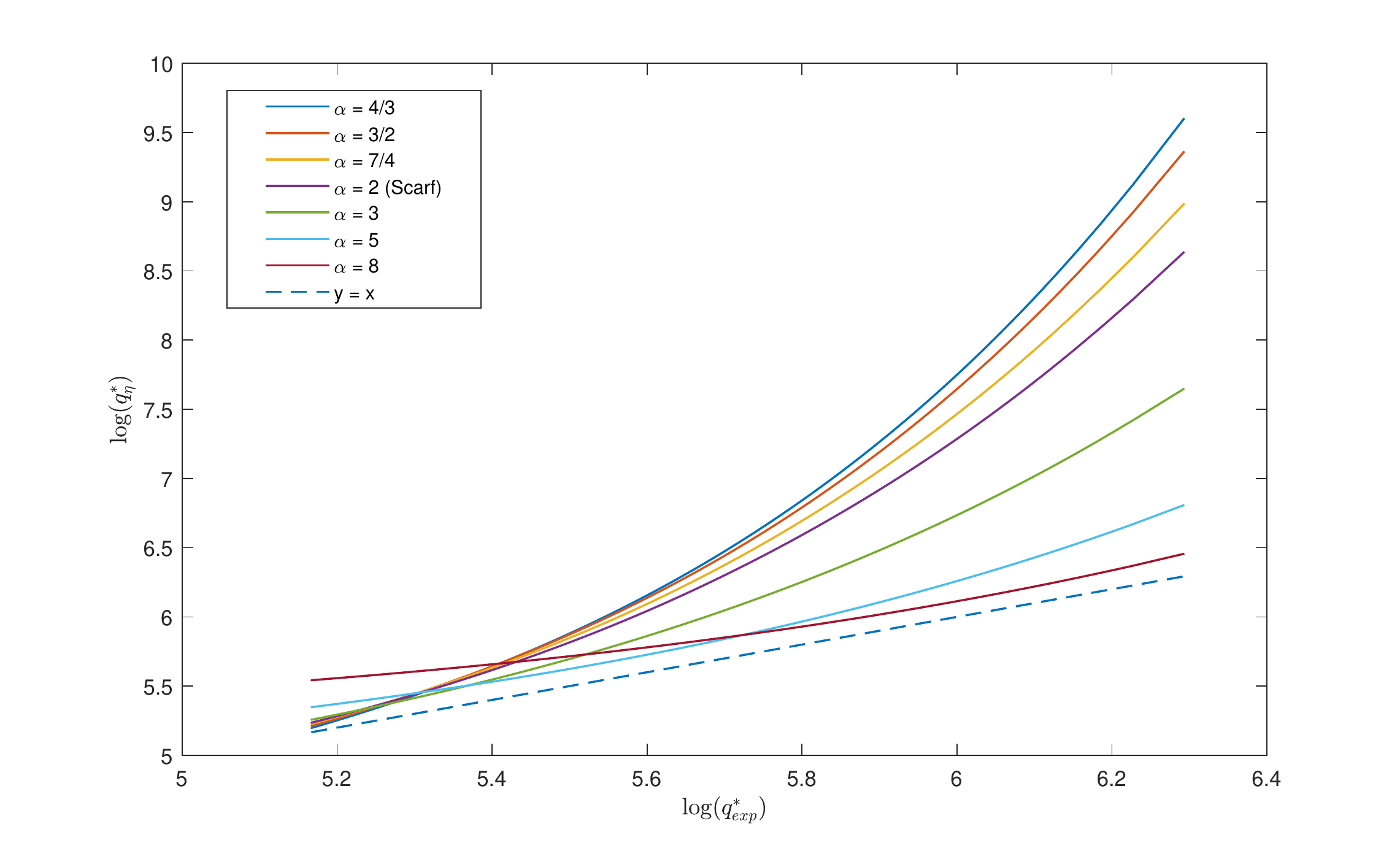}
\caption{The log-log plot compares the optimal order quantities for the distributionally robust newsvendor with $\alpha = 4/3, 3/2, 7/4, 2, 3, 5, 8$  with the optimal order quantity for the exponential distribution for $\eta \in [0.97,0.99998]$. As the figure illustrates for larger critical ratios, the knowledge of higher moment information makes the robust model less conservative and closer to the $y = x$ line.}
\label{m1}
\end{figure}

\begin{figure}[!htbp]
\centering
\includegraphics[width=11cm] {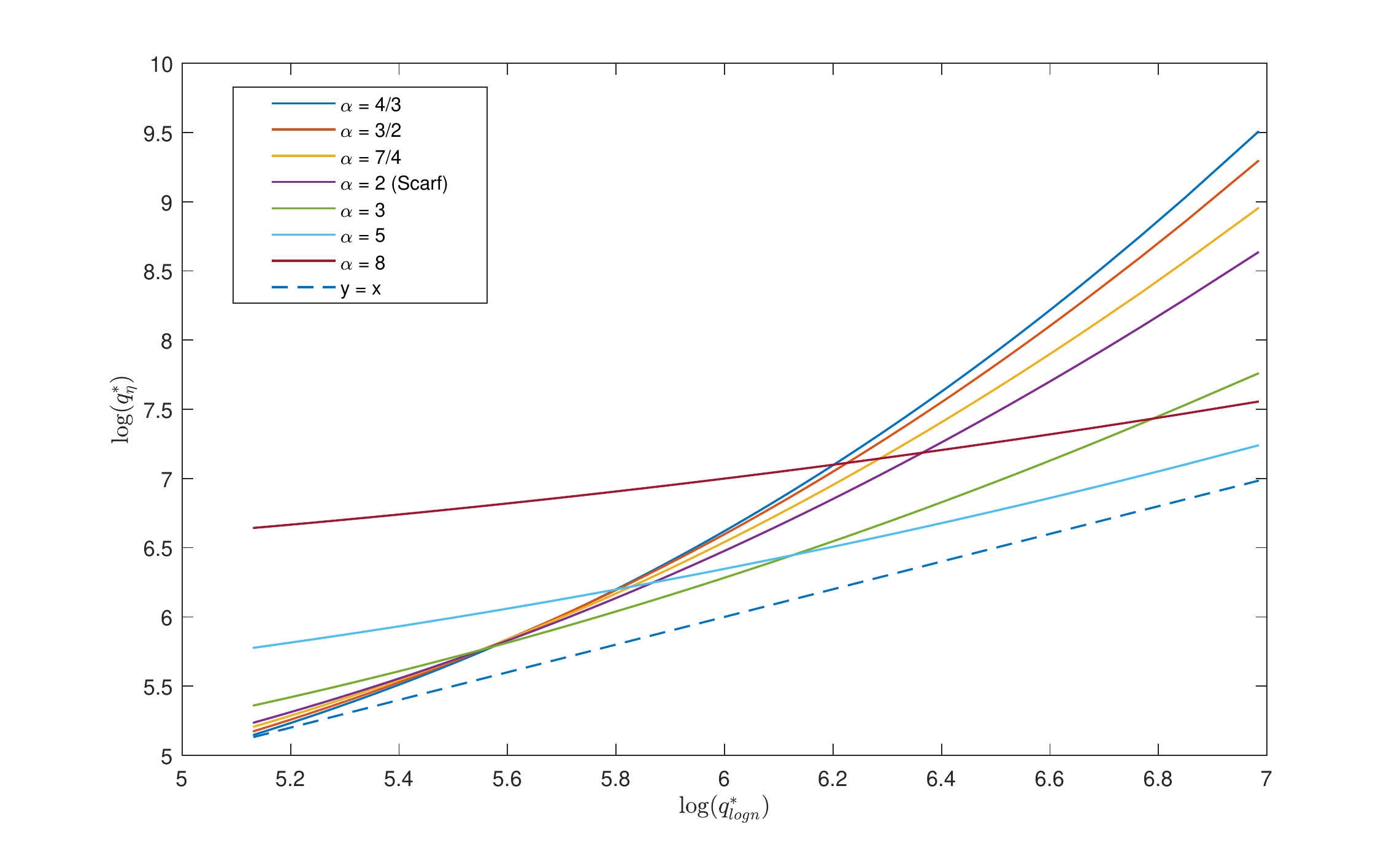}
\caption{The log-log plot compares the optimal order quantities for the distributionally robust newsvendor with $\alpha = 4/3, 3/2, 7/4, 2, 3, 5, 8$  with the optimal order quantity for the lognormal distribution for $\eta \in [0.97,0.99998]$. As the figure illustrates for larger critical ratios, the knowledge of higher moment information makes the robust model less conservative and closer to the $y = x$ line. Only for $\alpha = 8$, the line is above the $\alpha = 5$ line for the chosen critical ratios, but the slope indicates that for even higher critical ratios, the robust order quantities for $\alpha = 8$ will get closer to the $y = x$ in comparison to the $\alpha = 5$. This is verified in Table \ref{m2-supp}.}
\label{m2}
\end{figure}

\begin{figure}[!htbp]
\centering
\includegraphics[width=11cm] {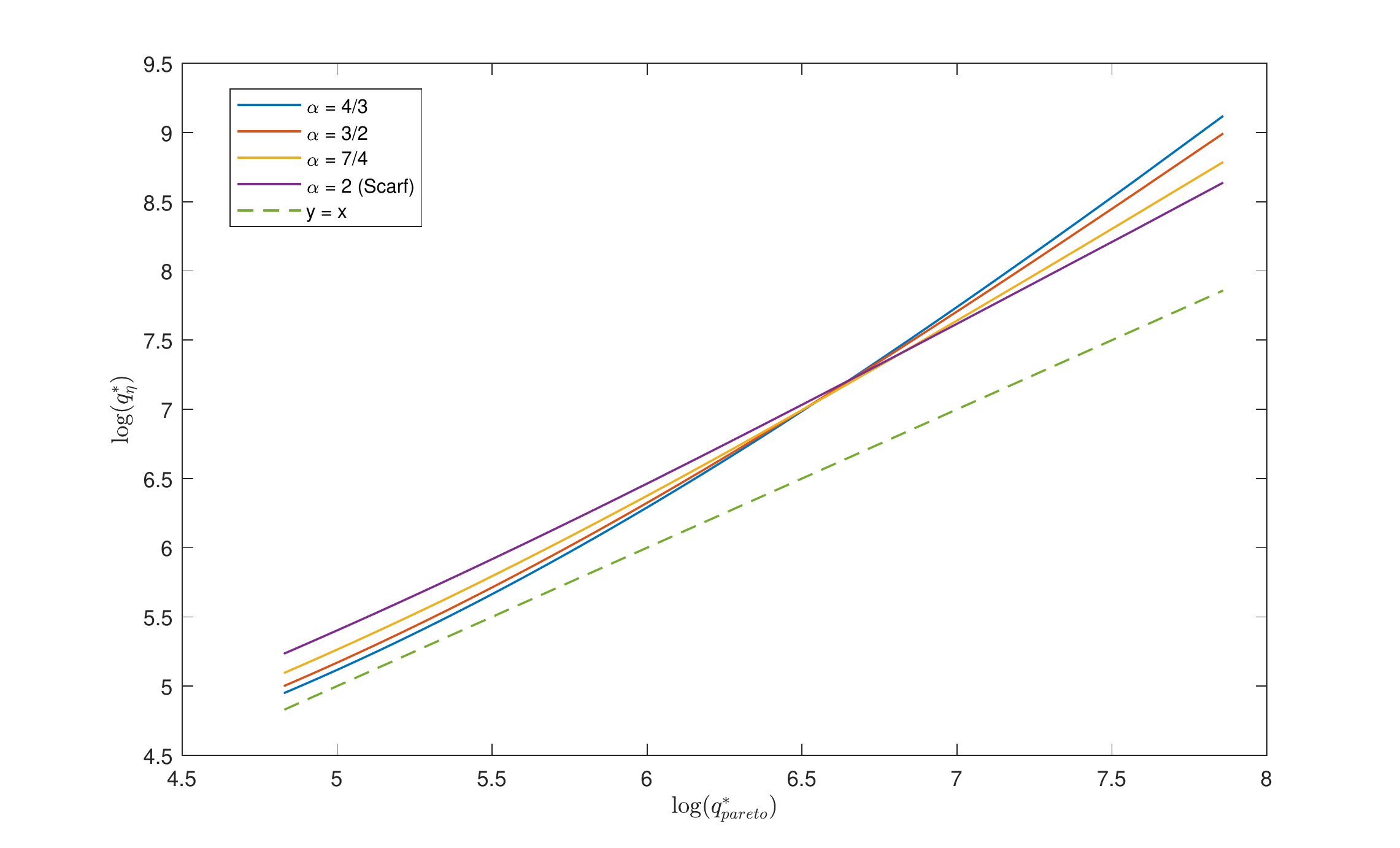}
\caption{The log-log plot compares the optimal order quantities for the distributionally robust newsvendor with $\alpha = 4/3, 3/2, 7/4, 2$  with the optimal order quantity for the Pareto distribution for $\eta \in [0.97,0.99998]$. As the figure illustrates for larger critical ratios, the knowledge of higher moment information makes the robust model less conservative and closer to the $y = x$ line.}
\label{m3}
\end{figure}

\begin{table}[!htbp]
\centering
\begin{tabular}{|c|c|c|c|}
\hline
$\eta$ & $q_{logn}^*$ & $q_{\eta}^*$ $(\alpha = 5)$ & $q_{\eta}^*$ $(\alpha = 8)$ \\
\hline
0.99998 & 1080.46 & 1389.84 & 1913.45\\
0.999998 &  1643.65 & 2177.66 &  2549.77 \\
0.9999998 &   2405.78 &   3371.15 & 3134.68 \\
0.99999998 & 3418.24 & 4414.89 &  4292.02 \\
\hline
\end{tabular}
\caption{Comparison of optimal order quantities for lognormal with the distributionally robust model for $\alpha = 5$ and $\alpha = 8$.}\label{m2-supp}
\end{table}
\subsection{Scaling Behavior of Optimal Order Quantities and Optimal Costs}
We next validate the scaling behavior of the optimal order quantity and the optimal cost for the distributionally robust model as discussed in Proposition \ref{newprop}, illustrating the regularly varying structure and compare it with the corresponding behavior of the optimal solution and the optimal costs for the three distributions in (a)-(c). The scaling constant as the critical ratio approaches $1$ for the robust model is provided in Table \ref{m2-suppa} for the specified values of $\alpha$. In Figures \ref{n1}, \ref{n2} and \ref{n3}, we plot these values for the range of critical ratios in $[0.97,0.99998]$. In the case of the exponential distribution and the lognormal distribution, a simple calculation indicates that these ratios converge to $1$ as the critical ratio approaches $1$, while the distributionally robust newsvendor model shows a different scaling behavior. On the other hand, for the Pareto distribution, the ratio of $(\beta-1)/\beta$ is exactly valid for all critical ratios $\eta$ as shown in the figure.

\begin{table}[!htbp]
\centering
\begin{tabular}{|c|c|c|c|c|c|c|c|}
\hline
$\alpha$ & $4/3$ & $3/2$ & $7/4$ & $2$ & $3$ & $5$ & $8$ \\
\hline
$\displaystyle \lim_{\eta \rightarrow 1} \frac{(1-\eta)q_{\eta}^*}{C_{\eta}^*}$ = $\displaystyle \frac{\alpha-1}{\alpha}$  & $1/4$ & $1/3$ & $3/7$ & $1/2$ & $2/3$ & $4/5$ & $7/8$ \\
\hline
\end{tabular}
\caption{Scaling behavior of the optimal order quantity and the optimal cost for the distributionally robust model.}\label{m2-suppa}
\end{table}

\begin{figure}[htbp]
\centering
\includegraphics[width=11cm] {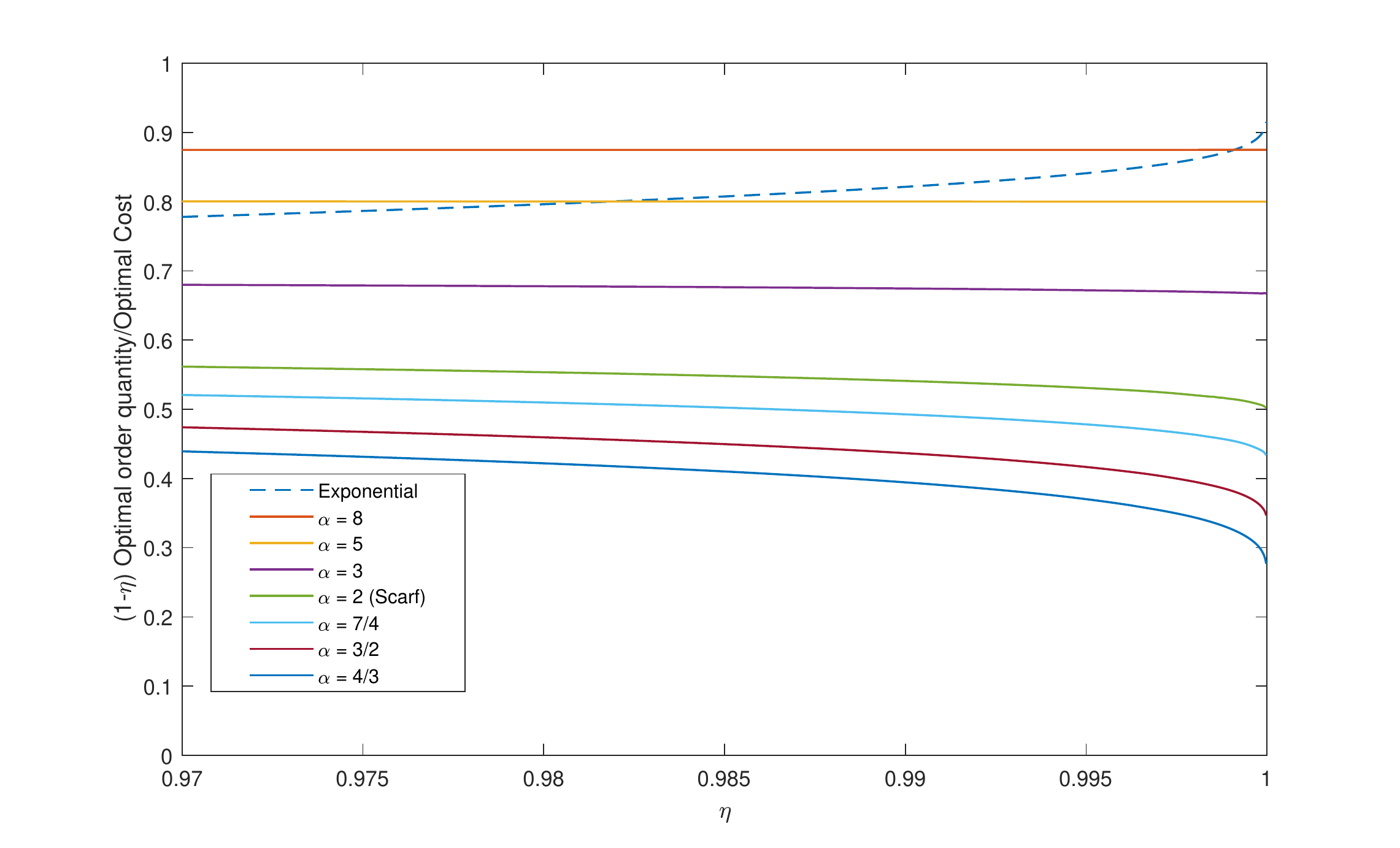}
\caption{The plot provides the ratios $(1-\eta)q_{\eta}^*/C_{\eta}^*$ for the distributionally robust newsvendor model (with different values of $\alpha$) and the corresponding values for the exponential distribution (which is the dashed line, which tends to 1 as $\eta$ tends to 1). The figure illustrates the difference in the scaling behavior of the two models with the limit value given by the numbers in Table \ref{m2-suppa}.} \label{n1}
\end{figure}

\begin{figure}[htbp]
\centering
\includegraphics[width=11cm] {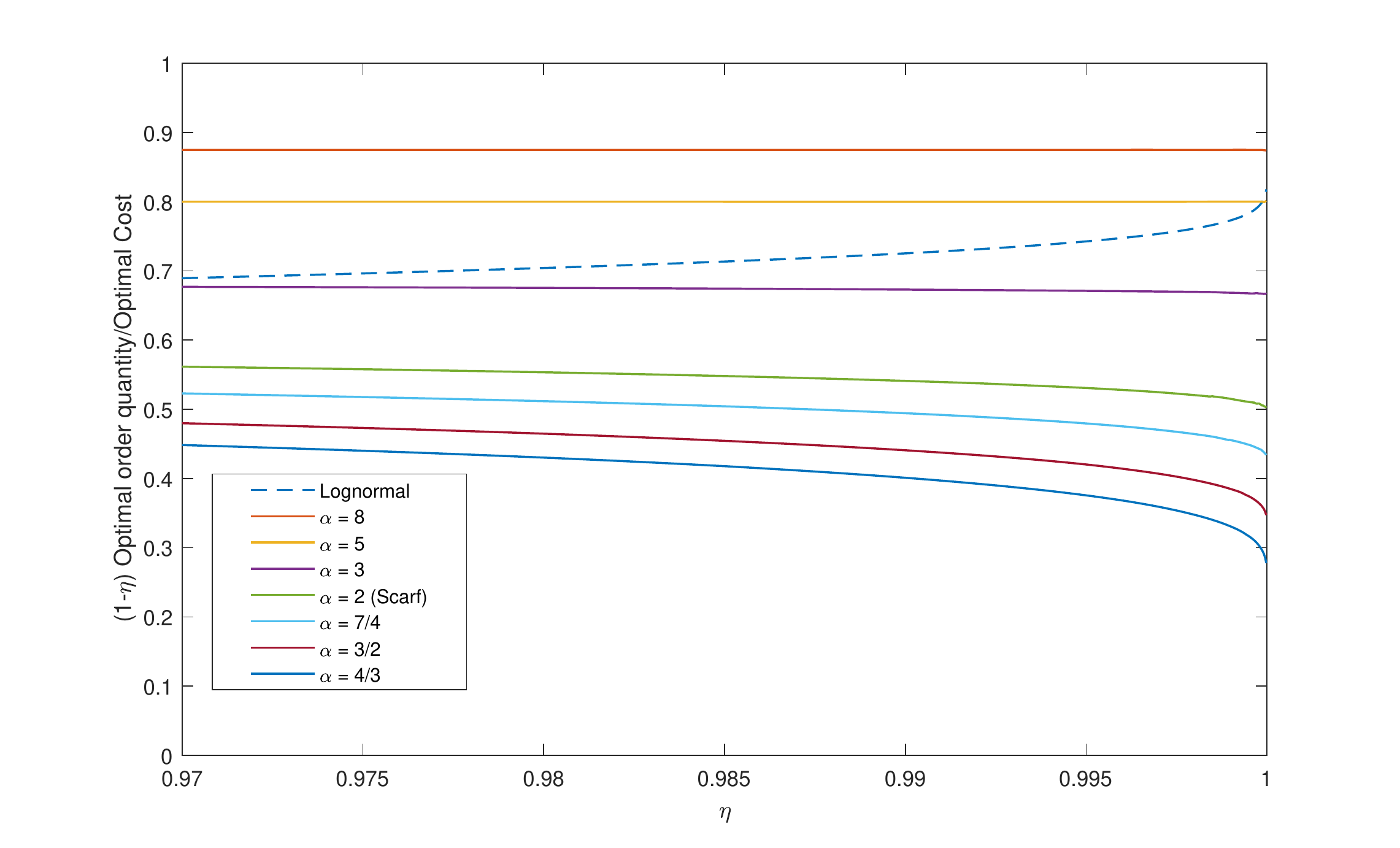}
\caption{The plot provides the ratios $(1-\eta)q_{\eta}^*/C_{\eta}^*$ for the distributionally robust newsvendor model (with different values of $\alpha$) and the corresponding values for the lognormal distribution (which is the dashed line which tends to 1 as $\eta$ tends to 1). The figure illustrates the difference in the scaling behavior of the two models with the limit value given by the numbers in Table \ref{m2-suppa}.}
\label{n2}
\end{figure}

\begin{figure}[htbp]
\centering
\includegraphics[width=11cm] {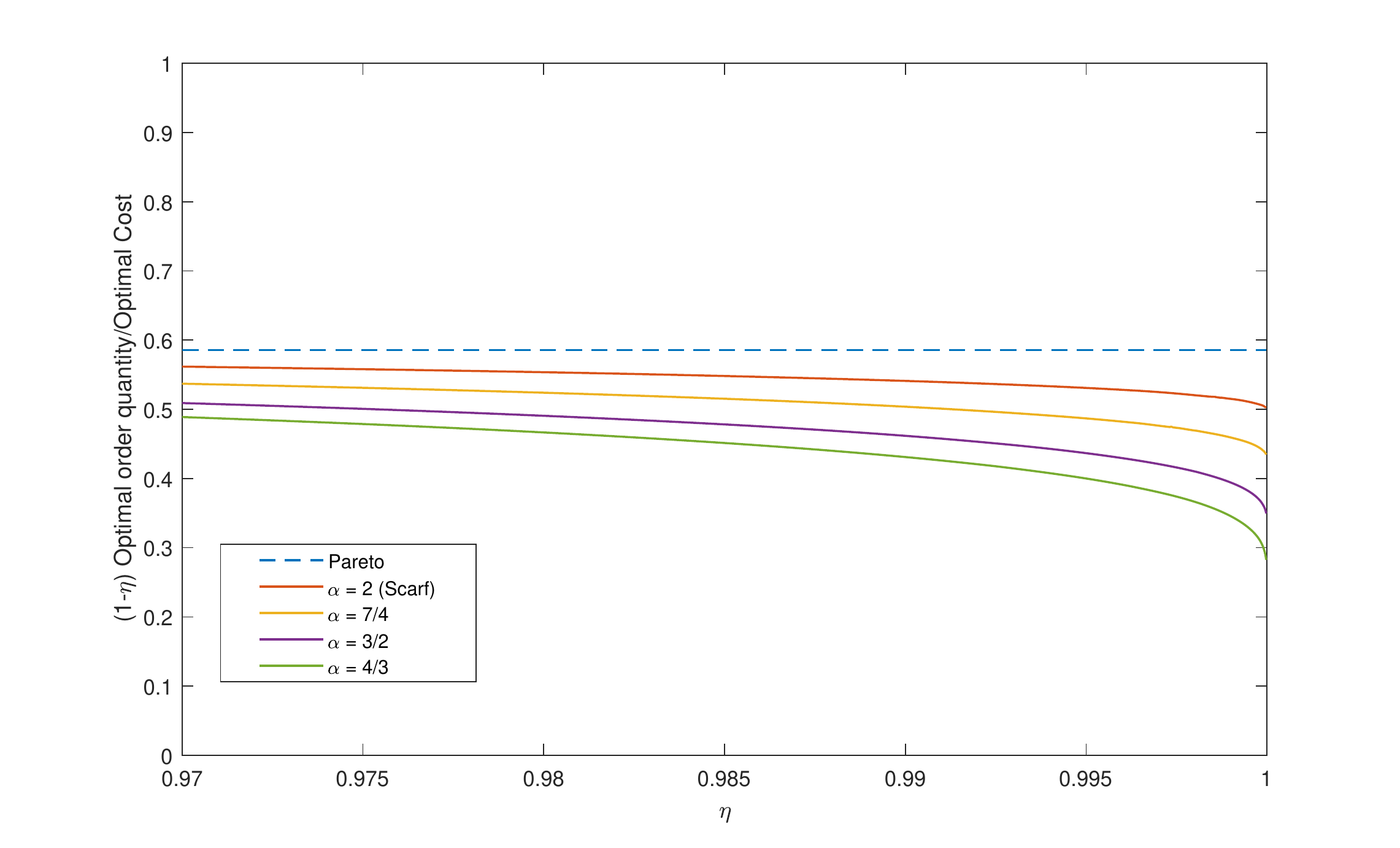}
\caption{The plot provides the ratios $(1-\eta)q_{\eta}^*/C_{\eta}^*$ for the distributionally robust newsvendor model  (with different values of $\alpha$) where the moments are obtained from a Pareto distribution. The Pareto distribution is regularly varying and in this case, the ratio is exactly $\sqrt{2}/(1+\sqrt{2}) \approx 0.5858$.}
\label{n3}
\end{figure}

\subsection{Robustness to Contamination}
In this section, we compare the performance of the two optimal order quantities - the solution to the classical newsvendor problem and the distributionally robust newsvendor problem. To compare the performance, we use a mixture of two distributions, $F_{0}$ and $F_1$, where $F_0$ is the original distribution and $F_{1}$ is a heavy-tailed (contamination) distribution. The mixture distribution is parameterized by $\lambda \in [0,1]$:
\[ F_{\lambda} \equiv (1-\lambda)F_{0} + \lambda F_{1}, \quad 0\le \lambda \le 1.\]
Such a contamination technique has been proposed in Dupa$\check{c}$ov$\acute{a}$ \cite{dupacova} to analyze the stability of optimal solutions in stochastic programs when the true distribution is contaminated by another distribution. For the choice of the distribution $F_0$, we use each of the distributions in (a)-(c). A natural choice for the contamination distribution is $F_1 = F^*$ where $F^*$ is the distribution described in Theorem \ref{thm1}(b) and Proposition \ref{newprop} for a chosen $\alpha$. When $\lambda = 0$, the optimal order quantity is the solution to the newsvendor problem with the corresponding distribution in (a)-(c). On the other hand, when $\lambda = 1$, the optimal order quantity is the solution to the distributionally robust newsvendor for the given $\alpha$. Denote the corresponding optimal order quantities by $q_{0}^*$ and $q_1^*$ respectively. The order quantities satisfy:
\begin{equation} \label{contamination0}
\begin{array}{rlll}
\displaystyle C_0(q_0^*)  & \leq  & \displaystyle C_0(q_1^*), \\
\displaystyle C_1(q_0^*)  & \geq  & \displaystyle C_1(q_1^*), \\
\end{array}
\end{equation}
where $C_0$ is the newsvendor cost under distribution $F_0$ and $C_1$ is the newsvendor cost under distribution $F_1$. Then, a natural question is the what is the value of $\lambda^*$, beyond which the robust order quantity outperforms the classical solution under contamination. If $\lambda^* < 0.5$, it indicates that with less than 50\% contamination, the robust order quantity outperforms the standard newsvendor order quantity. On the other if $\lambda^* > 0.5$, this indicates that more than 50\% contamination is needed for the robust order quantity to outperform the standard newsvendor order quantity. In Figures \ref{o1}, \ref{o2} and \ref{o3}, we plot the $\lambda^*$ values for each of the exponential, lognormal and Pareto distributions with the contaminating distribution given by the regularly varying distribution for the corresponding $\alpha$ value. The figures illustrate that for high service levels, with even a small amount of contamination, the distributionally robust models will outperform the standard newsvendor solution.

\begin{figure}[htbp]
\centering
\includegraphics[width=15cm] {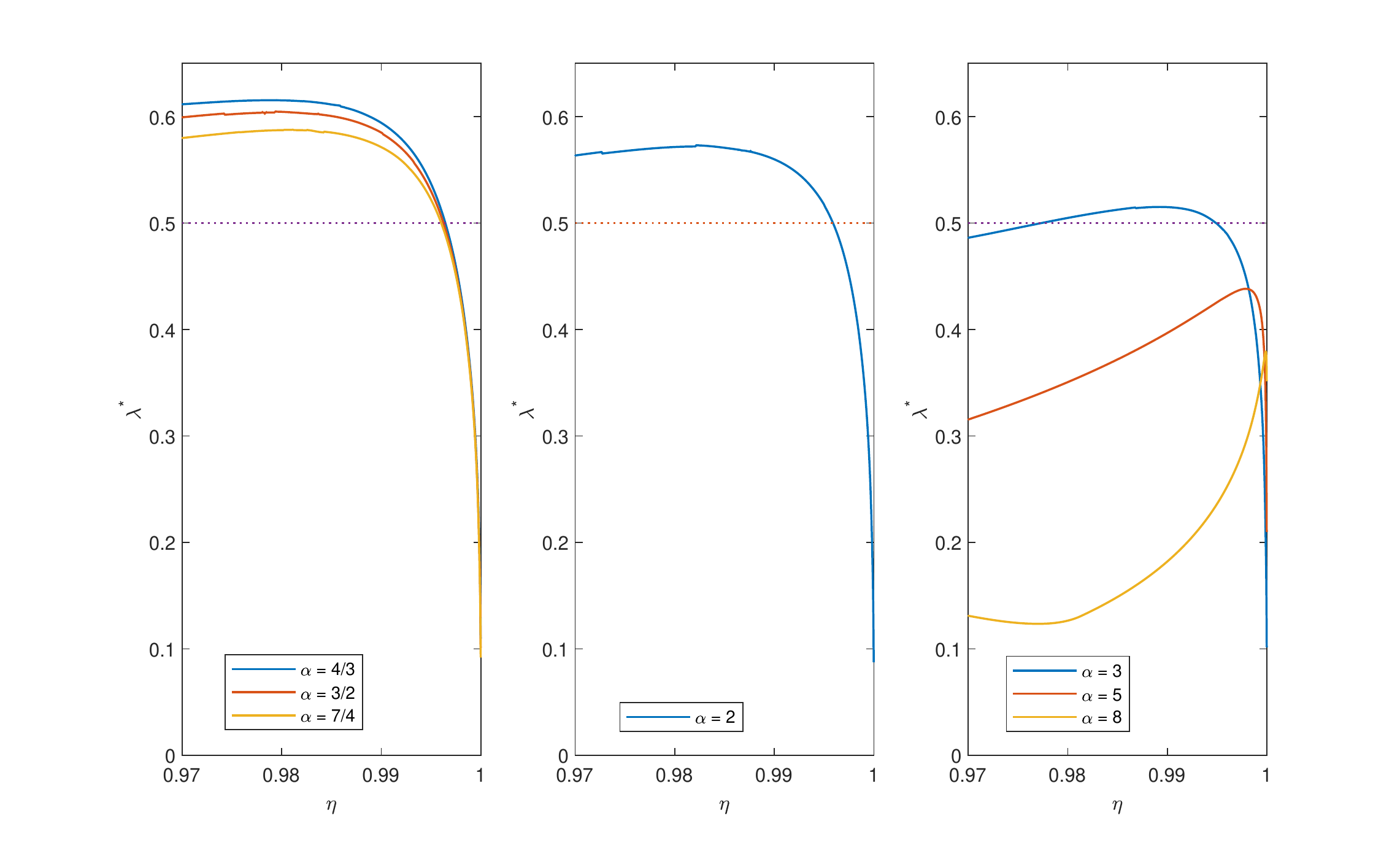}
\caption{The plot provide the $\lambda^*$ values for the case where the original distribution is exponential. In each of the cases, we see that as the critical ratio approaches 1, the value of $\lambda^*$ rapidly drops to 0. This indicates that a small amount of contamination is sufficient for the robust solution to outperform the classical solution for high service levels.}
\label{o1}
\end{figure}

\begin{figure}[htbp]
\centering
\includegraphics[width=15cm] {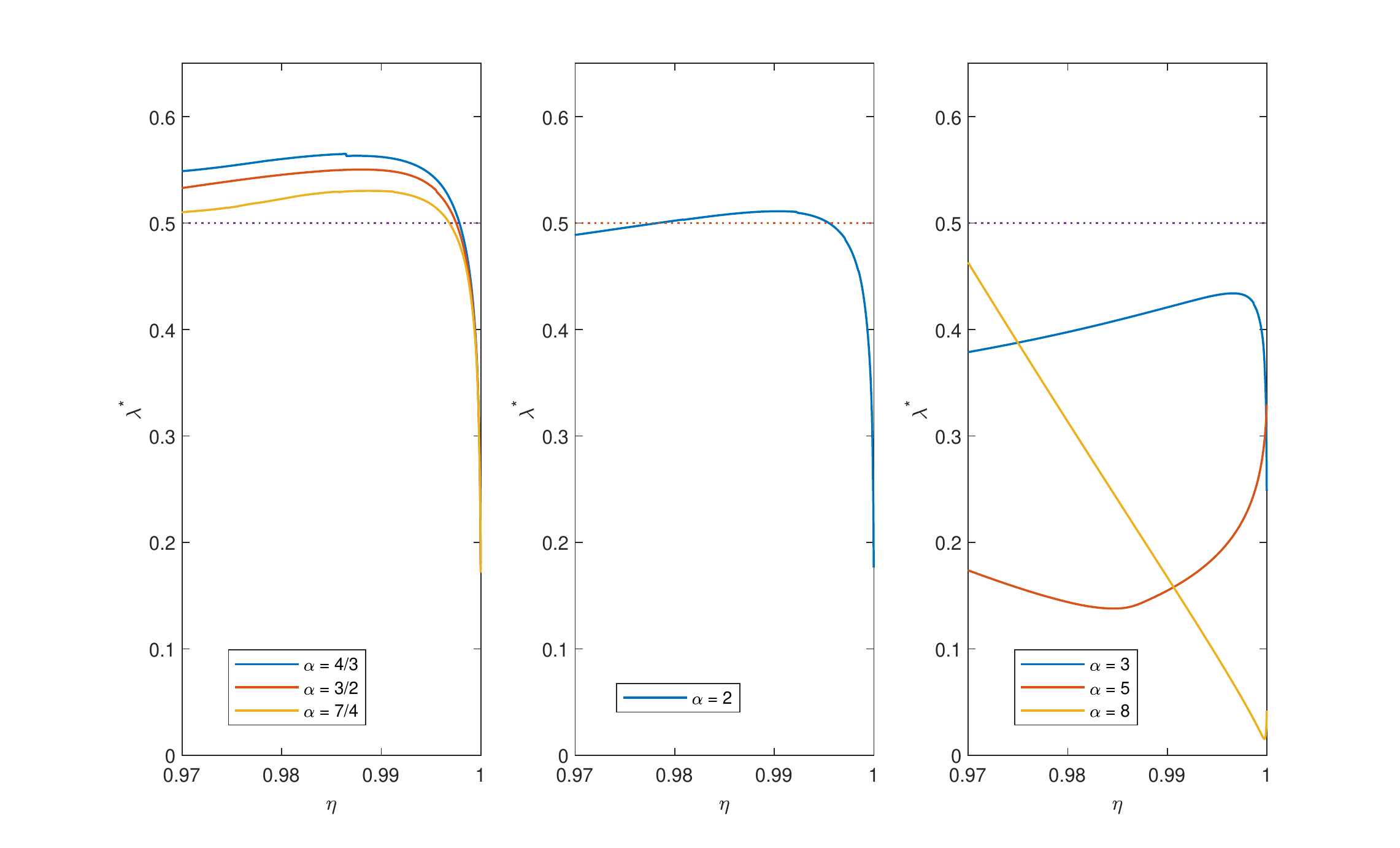}
\caption{The plot provide the $\lambda^*$ values for the case where the original distribution is lognormal. In each of the cases, we see that as the critical ratio approaches 1, the value of $\lambda^*$ rapidly drops to 0. This indicates that a small amount of contamination is sufficient for the robust solution to outperform the classical solution for high service levels.}
\label{o2}
\end{figure}

\begin{figure}[htbp]
\centering
\includegraphics[width=15cm] {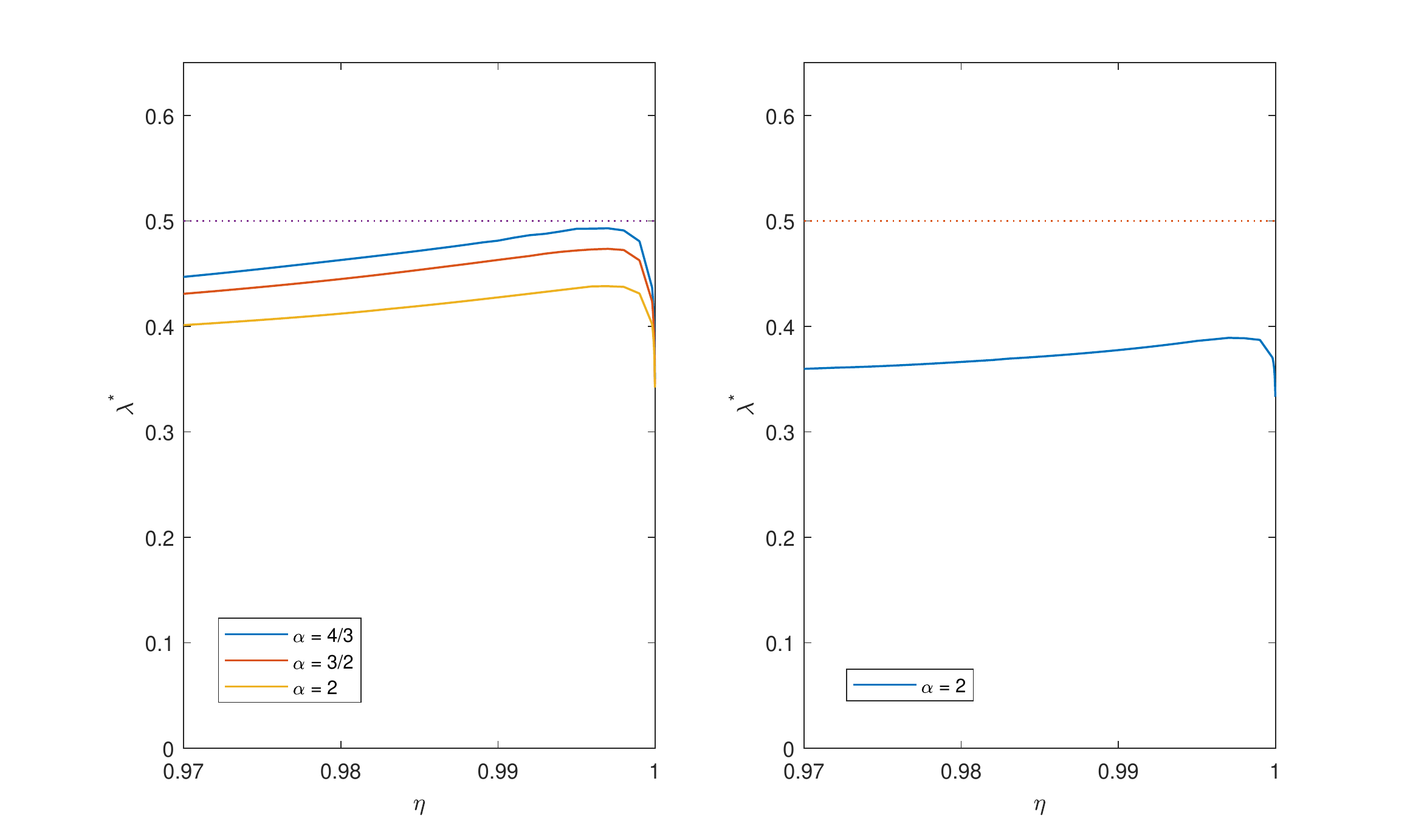}
\caption{The plot provide the $\lambda^*$ values for the case where the original distribution is Pareto. In each of the cases, we see that as the critical ratio approaches 1, the value of $\lambda^*$ rapidly drops to 0. This indicates that a small amount of contamination is sufficient for the robust solution to outperform the classical solution for high service levels.}
\label{o3}
\end{figure}

}}

}

 \section{Conclusion}
 The goal of this paper was to characterize properties of the optimal order quantities in a newsvendor model under a robust framework of distributional ambiguity with moment constraints.  Building on the observation that the optimal order quantity in Scarf's model is also optimal for a censored student-t distribution with parameter 2, we show that by assuming knowledge of the first and $\alpha$-th moment, the optimal order quantity is also optimal for a regularly varying with tail index $\alpha$. This provides a characterization of a new distribution, which does not lie in the original ambiguity set, but for which the order quantity from a robust model, continues to remain optimal. We provied numerical evidence to illustrate these results and its applicability.

Several interesting questions still remain to be answered. While our results provide a characterization of the distribution $F^*$ in the tails with moment information, it would be interesting to see if there is a more precise analytical characterization of other aspects of the distribution $F^*$. Secondly, it would be interesting to characterize the distribution $F^*$ for other types of ambiguity sets that include distributions around a nominal distribution using other probability metrics. We believe this will help managers better understand as to the types of data under which, the solutions from other robust models will do. Lastly, implications of these results for multidimensional newsvendor problems need to be studied. We leave this for future research.



\section*{Acknowledgements}
\textcolor{black}{The authors would like to thank the Area Editor Dan Adelman, the Associate Editor and two anonymous reviewers for their very useful inputs and suggestions on improving the paper.} The authors would also like to thank Guillermo Gallego (HKUST) for providing useful references on this topic and participants at the Distributionally Robust Optimization Workshop at Banff, 2018 for their valuable inputs on this research.

\renewcommand{\baselinestretch}{1.00}
\small

\end{document}